\newcommand{\nc}{\newcommand}
\numberwithin{equation}{section}
\newtheorem{thm}{Theorem}[section]
\newtheorem{prop}[thm]{Proposition}
\newtheorem{lem}[thm]{Lemma}
\newtheorem{cor}[thm]{Corollary}
\theoremstyle{remark}
\newtheorem{rem}[thm]{Remark}
\newtheorem{example}[thm]{Example}
\newtheorem{dfn}[thm]{Definition}
\newtheorem{conj}[thm]{Conjecture}
\nc{\gl}{\mathfrak{gl}}
\nc{\GL}{\mathfrak{GL}}
\nc{\g}{\mathfrak{g}}
\nc{\gh}{\widehat\g}
\nc{\h}{\mathfrak{h}}
\nc{\la}{\lambda}
\nc{\al}{\alpha }
\nc{\be}{\beta }
\nc{\ve}{\varepsilon }
\nc{\om}{\omega }
\nc{\ta}{\theta}
\nc{\ch}{{\mathop {\rm ch}}}
\nc{\Tr}{{\mathop {\rm Tr}\,}}
\nc{\Id}{{\mathop {\rm Id}}}
\nc{\ad}{{\mathop {\rm ad}}}
\nc{\bra}{\langle}
\nc{\ket}{\rangle}
\nc{\x}{{\bf x}}
\nc{\bm}{{\bf m}}
\nc{\bs}{{\bf s}}
\nc{\bp}{{\bf p}}
\nc{\bc}{{\bf c}}
\nc{\pa}{\partial}
\nc{\ld}{\ldots}
\nc{\cd}{\cdots}
\nc{\hk}{\hookrightarrow}
\nc{\T}{\otimes}
\nc{\gr}{\mathrm{gr}}
\nc{\ov}{\overline}
\nc{\cO}{\mathcal O}
\nc{\msl}{\mathfrak{sl}}
\nc{\mgl}{\mathfrak{gl}}
\nc{\U}{\mathrm U}
\nc{\V}{\EuScript V}
\nc{\cL}{\mathcal{L}}
\nc{\Res}{\mathrm{Res\ }}
\newcommand{\bC}{{\mathbb C}}
\newcommand{\bZ}{{\mathbb Z}}
\newcommand{\bP}{{\mathbb P}}
\newcommand{\bG}{{\mathbb G}}
\newcommand{\fg}{{\mathfrak g}}
\newcommand{\fb}{{\mathfrak b}}
\newcommand{\fn}{{\mathfrak n}}
\newcommand{\Fl}{\EuScript{F}}
\begin{document}

\title[Degenerate $SL_n$: representations and flag varieties]
{Degenerate $SL_n$: representations and flag varieties}

\author{Evgeny Feigin}
\address{Evgeny Feigin:\newline
Department of Mathematics,\newline
National Research University Higher School of Economics,\newline
Vavilova str. 7, 117312, Moscow, Russia,\newline
{\it and }\newline
Tamm Theory Division, Lebedev Physics Institute
}
\email{evgfeig@gmail.com}

\begin{abstract}
The degenerate Lie group is a semidirect product of the Borel subgroup with the normal 
abelian unipotent subgroup.
We introduce a class of the highest weight representations of the degenerate group of type A, generalizing
the PBW-graded representations of the classical group. Following the classical construction
of the flag varieties, we consider the closures of the orbits of the abelian
unipotent subgroup in the projectivizations of the representations. We show that the degenerate flag
varieties $\Fl^a_n$ and their desingularizations $R_n$ can be obtained via this construction. 
We prove that the coordinate ring of $R_n$ is isomorphic to the direct sum of duals of the highest 
weight representations of the degenerate group. In the end, we state several conjectures on the structure
of the highest weight representations.   
\end{abstract}

\maketitle

\section*{Introduction}
Let $\g$ be a simple Lie algebra with the Borel subalgebra $\fb$ and let $G,B$ be the corresponding groups. 
The irreducible highest weight representation of $\g$ play fundamental role
in the algebraic and geometric Lie theory. In particular, the generalized flag varieties for $G$
can be realized as $G$-orbits inside the projectivizations of these modules. 
Let $\g^a$ and $G^a$ be the degenerate Lie algebra and the degenerate Lie group 
(see \cite{Fe1},\cite{Fe2}, \cite{FF}, \cite{FFiL}, \cite{PY}, \cite{Y}). 
The Lie algebra
is a sum of the subalgebra $\fb$ and of the abelian ideal $\g/\fb$ with the adjoint action of $\fb$ on
$\g/\fb$. The Lie group is the semidirect
product of the Borel subgroup with the normal abelian unipotent subgroup $\exp(\g/\fb)$.
In this paper we are concerned with the following question: what are the analogues of the finite-dimensional
representations of $\g$ and of the flag varieties in the degenerate situation? 

In this paper we only study the type $A$ case, so from now on $\g=\msl_n$ and $G=SL_n$. Recall that in this
case the fundamental representations $V_{\omega_k}$ are labeled by a number $k=1,\dots,n-1$ and one has
$V_{\omega_k}\simeq \Lambda^k(\bC^n)$. Now let $\la=\sum_{i=1}^{n-1} m_i\om_i$ be a dominant weight, $m_i$ are
non-negative integers. Then the corresponding highest weight representation $V_\la$ sits inside
the tensor product of $V_{\om_i}$'s, where each factor appears exactly $m_i$ times. The image of the
embedding is nothing but the $\g$-span of the tensor product of highest weight vectors. It has been shown
in \cite{FFoL1},\cite{FFoL2}, \cite{Fe1} that the degenerate Lie algebra $\g^a$ naturally acts on the
PBW-graded representations $V^a_\la$ (the associated graded to $V_\la$ with respect to the PBW filtration). 
In addition, $V^a_\la$ still sits inside the tensor product of
$V^a_{\omega_i}$'s in the same way as $V_\la$ in the tensor product of $V_{\om_i}$'s. 
The natural question is: are there natural representations of $\g^a$ 
different from $V^a_\la$? It turns out that this question is important for the study of the PBW-degeneration
of flag varieties, see \cite{Fe1}, \cite{Fe2}, \cite{FF}. Let us recall basic steps here.

Let $\fn^-$ be the nilpotent subalgebra such that $\g=\fb\oplus\fn^-$. We denote by $N^-$ the corresponding
unipotent subgroup of $G$. 
Let $(N^-)^a\simeq \exp(\g/\fb)$ be the abelian unipotent subgroup of $G^a$. This group is isomorphic
to the product of $\dim\fn^-$ copies of the group $\bG_a$ -- the additive group of the field.
Let  $v_\la\in V_\la$ be a highest weight vector.  
Recall that the generalized flag variety $\Fl_\la$ is defined as the $G$-orbit of the highest weight
line $\bC v_\la$ in $\bP(V_\la)$ (see \cite{FH}, \cite{Fu}, \cite{K}). 
The corresponding degenerate flag variety is defined as the closure of
the  $(N^-)^a$-orbit of $\bC v_\la$ in $\bP(V^a_\la)$ (see \cite{Fe1}, \cite{Fe2}). 
For general $\la$, this is a normal singular projective
variety enjoying explicit description in terms of linear algebra. Let us restrict here to the case of 
regular dominant $\la$ (i.e. all $m_i$ are positive). Then all such flag varieties are isomorphic
and we denote the corresponding variety by $\Fl^a_n$. $\Fl^a_n$ can be explicitly realized as the variety of
collections $(V_i)_{i=1}^{n-1}$ of subspaces of an $n$-dimensional space
subject to certain conditions. We note that the entries $V_i$ correspond to fundamental weights $\omega_i$.

In \cite{FF} a desingularization $R_n$ for $\Fl^a_n$ was constructed in terms of linear algebra. These 
$R_n$ are smooth projective algebraic varieties, which are Bott towers, i.e. can be constructed as succesive
fibrations with fibers $\bP^1$. A point in $R_n$ is a collection $(V_{i,j})$ with $1\le i\le j\le n-1$. 
This suggests that the "fundamental" representations of the Lie algebra $\g^a$ are in one-to-one correspondence
with the set of positive (non-necessarily simple) roots. In fact, it turns out that to each positive root
$\al$ one can attach a representation $M_\al$. These representations are highest weight in a sense that 
$M_\al$ is generated from a highest weight vector $v_\al$ by the action of the symmetric algebra of $\fn^-$
(for example, for simple roots $\al_i$ one has $M_{\al_i}\simeq V^a_{\om_i}$). 
In addition, $R_n$ can be embedded into the product of $\bP(M_\al)$ (over all positive roots
$\al$) as the closure of the $(N^-)^a$-orbit through the product of the highest weight lines. Similar to
the classical situation, fundamental representations give us a way to construct a large class of $\g^a$-modules.
Namely, given a collection of non-negative integers $\bm=(m_\al)$, $\al$ -- positive root, we consider the
$\g^a$-module
\[
M_\bm=S^\bullet (\fn^-)\cdot v_\bm\subset \bigotimes M_{\al}^{\T m_\al},\ \ 
v_\bm=\bigotimes v_{\al}^{\T m_\al}.
\]  
In particular, if $m_\al=0$ for non-simple $\al$, then $M_\bm\simeq V^a_\la$ for 
$\la=\sum m_{\al_i} \om_i$ ($\al_i$ are simple roots). 
We give more examples of such modules and formulate several conjectures concerning the structure off 
$M_\bm$. 

By definition, for any two collections $\bm^1$ and $\bm^2$, we have a $\g^a$-equivariant embedding
$M_{\bm^1+\bm^2}\subset M_{\bm^1}\T M_{\bm^2}$, 
$v_{\bm^1+\bm^2}\mapsto v_{\bm^1}\T v_{\bm^2}$. Dualizing, we obtain an algebra 
$\bigoplus_\bm M_\bm^*$. Our main theorem is as follows:
\begin{thm}
The coordinate ring of $R_n$ is isomorphic to $\bigoplus_\bm M_\bm^*$.
\end{thm}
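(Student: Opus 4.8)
The plan is to realize both sides as explicit coordinate rings and construct an isomorphism by tracking highest weight vectors. Since $R_n$ is projective and is embedded into $\prod_\al \bP(M_\al)$ as the closure of the $(N^-)^a$-orbit through the product of highest weight lines, the homogeneous coordinate ring of $R_n$ with respect to the line bundle $\bigotimes_\al \eO(m_\al)$ (restricted to $R_n$) is, by general nonsense about projectively normal embeddings, the image of the multiplication map $\bigoplus_\bm \bigotimes_\al \mathrm{Sym}^{m_\al}(M_\al^*)\to \bigoplus_\bm H^0(R_n,\eO(\bm))$. First I would identify the degree-$\bm$ graded piece of the section ring: the sections of $\eO(\bm)$ over the ambient product restrict onto $R_n$, and since $R_n$ is the closure of a single $(N^-)^a$-orbit through $v_\bm$, a section is determined by its values on that orbit; this identifies $H^0(R_n,\eO(\bm))$ with the dual of the cyclic $(N^-)^a$-submodule generated by $v_\bm$, which (after differentiating the group action) is exactly $S^\bullet(\fn^-)\cdot v_\bm = M_\bm$. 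So $H^0(R_n,\eO(\bm))\cong M_\bm^*$ as $\g^a$-modules, and the ring structure on $\bigoplus_\bm H^0(R_n,\eO(\bm))$ matches the one on $\bigoplus_\bm M_\bm^*$ coming from the embeddings $M_{\bm^1+\bm^2}\hookrightarrow M_{\bm^1}\T M_{\bm^2}$.

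The technical heart is therefore two things: (a) showing the embedding $R_n\hookrightarrow \prod_\al\bP(M_\al)$ is \emph{projectively normal}, equivalently that the restriction maps $\bigotimes_\al \mathrm{Sym}^{m_\al}(M_\al^*)\to H^0(R_n,\eO(\bm))$ are surjective for all $\bm$, and (b) that they have kernel exactly matching the description of $M_\bm^*$ as a quotient. For (a) I would exploit the Bott-tower structure of $R_n$: since $R_n$ is an iterated $\bP^1$-bundle, its higher cohomology of the relevant nef line bundles vanishes (one can compute directly via the projection formula up the tower, or invoke that Bott towers are toric and these bundles are globally generated), and one can run an inductive argument along the tower showing that global sections upstairs are generated by products of sections pulled back from below together with the tautological sections of each $\bP^1$-fibration. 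For (b), the point is that the ideal of $R_n$ inside $\prod\bP(M_\al)$ should be generated by the "incidence/Plücker-type" relations that already appear in describing $R_n$ by linear algebra in \cite{FF}; combined with surjectivity this pins down $H^0(R_n,\eO(\bm))$ as the image of $\bigotimes_\al\mathrm{Sym}^{m_\al}(M_\al^*)$, which by definition of $M_\bm$ (as the $S^\bullet(\fn^-)$-span of $v_\bm$ inside $\bigotimes M_\al^{\T m_\al}$) is precisely $M_\bm^*$.

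The main obstacle I anticipate is the projective-normality/normal-generation step (a): proving that multiplication of sections is surjective in every multidegree $\bm$, not just asymptotically. One clean route is to identify $R_n$ with a toric variety, write down its polytope (or fan), and check that the line bundles $\eO(\bm)$ correspond to lattice polytopes whose lattice points generate — i.e., that the relevant Cayley-type polytope has the integer decomposition property; normality of the polytope then yields projective normality. Alternatively, one avoids toric machinery and argues directly along the Bott tower by induction on $n$ (or on the height of the tower), using that for a $\bP^1$-bundle $\bP(E)\to X$ with $E$ a sum of line bundles, $H^0(\bP(E),\eO(1)\otimes\pi^*L)=H^0(X,E^\vee\otimes L)$ and that surjectivity of multiplication is inherited from the base. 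Once surjectivity is in hand, matching up the kernels with the relations defining $M_\bm$ should be comparatively formal, since both are cut out by the same quadratic incidence relations among the $M_\al$'s.

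Everything else — identifying $H^0$ of each graded piece with $M_\bm^*$ as a $\g^a$-module, and checking the multiplication on $\bigoplus_\bm M_\bm^*$ agrees with the one induced by $M_{\bm^1+\bm^2}\hookrightarrow M_{\bm^1}\T M_{\bm^2}$ — is a direct unwinding of the definitions and of the orbit description of $R_n$ already set up in the excerpt.
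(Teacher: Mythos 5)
Your central observation --- restrict everything to the dense $(N^-)^a$-orbit and recognize the resulting functions as matrix coefficients of the cyclic module generated by $v_\bm$ --- is the right one, but you have attached it to the wrong ring, and the step you yourself flag as the technical heart is both unproven and unnecessary. The theorem concerns the multi-homogeneous coordinate ring $Q_n=\bC{\bf X}_n/I(R_n)$ of the embedding $R_n\subset\prod_{\al>0}\bP(M_\al)$, i.e.\ by definition the \emph{image} of $\bigoplus_\bm\bigotimes_\al \mathrm{Sym}^{m_\al}(M_\al^*)$ in the functions on the multicone --- not the full section ring $\bigoplus_\bm H^0(R_n,\eO(\bm))$. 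For the image your argument closes up directly: a multidegree-$\bm$ polynomial $F$, evaluated on the cone over the dense orbit, is the function $c\mapsto\langle F,\exp(\sum c_\al f_\al)v_\bm\rangle$; since the $f_\al$ commute and act nilpotently, the vectors $\exp(\sum c_\al f_\al)v_\bm$ span exactly $S(\fn^-)v_\bm=M_\bm$, so the multidegree-$\bm$ part of $I(R_n)$ is the annihilator of $M_\bm$ and $Q_\bm\simeq M_\bm^*$, with the ring structure dual to $M_{\bm^1+\bm^2}\subset M_{\bm^1}\T M_{\bm^2}$. No projective normality enters. By instead targeting $H^0(R_n,\eO(\bm))$ you are obliged to prove surjectivity of restriction in every multidegree, and your justification is only a sketch; note that the toric structure on the Bott tower $R_n$ is not visibly compatible with the $(N^-)^a$-action or with the embedding into $\prod\bP(M_\al)$, so the toric/polytope route would itself require substantial work. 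Moreover, your statement that ``a section is determined by its values on the orbit, hence $H^0\cong M_\bm^*$'' only gives injectivity of $H^0$ into functions on the orbit; identifying its image with $M_\bm^*$ presupposes exactly the surjectivity you are trying to establish.

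A second gap: in your step (b) you lean on the ideal of $R_n$ being \emph{generated} by the Pl\"ucker/incidence relations. In the paper this is precisely the open Conjecture ($J_n=I(R_n)$); what is actually proved is only that $I(R_n)$ is the saturation of $J_n$. The paper's own route is purely algebraic and avoids both issues: it writes down the generalized Pl\"ucker relations, proves a reduction lemma (any $F$ becomes, modulo $J_n$ and after multiplication by powers of the $X^{(i,j)}_{1,\dots,i}$, a polynomial in the PBW-degree $\le 1$ variables), computes $I(R_n)$ as the kernel of an explicit polynomial parametrization $\Psi$ of the dense cell, deduces $(\fn^-)^a$-stability of $I(R_n)$ and cocyclicity of each $Q_\bm$ with cocyclic vector $\prod(X^{(i,j)}_{1,\dots,i})^{m_{i,j}}$, and only then identifies $Q_\bm^*$ with $M_\bm$ as the cyclic submodule generated by $v_\bm$. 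If you replace your $H^0$ computation by the image computation above, your approach becomes a correct and in fact shorter proof of the theorem as stated, though it yields less information about the defining ideal than the paper's argument.
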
 
Our main tool is the explicit form of the Pl\"ucker-type relations in the coordinate ring of $R_n$.

Finally we note that one can naturally attach to a module $M_\bm$ the corresponding "flag variety".
Namely, let $\Fl(M_\bm)\subset\bP(M_\bm)$ be the closure of the orbit $(N^-)^a \cdot \bC v_\bm$.These
are the so-called $\bG_a$-varieties (see \cite{A}, \cite{HT}). We note that if all $m_\al$ are positive, then
$\Fl(M_\bm)\simeq R_n$ and if all $m_\al$ but $m_{\al_i}$ vanish, then $\Fl(M_\bm)\simeq \Fl^a_n$.      

Our paper is organized as follows:\\
In Section 1 we settle notation and recall main definitions and constructions.\\  
In Section 2 we state our results and provide examples.\\
Section 3 is devoted to the proofs and in Section 4 several conjectures are stated. 

\section{Notation and main objects}
\subsection{Classical story.} Let $\g$ be a simple Lie algebra with the Cartan decomposition 
$\g=\fn\oplus\h\oplus\fn^-$. Let $\fb=\fn\oplus\h$ be the Borel subalgebra. We denote by
$\Phi^+$ the set of positive roots of $\g$ and by $\al_1,\dots,\al_l\in\Phi^+$ the set of simple
roots. We sometimes write $\al>0$ instead of $\al\in\Phi^+$. For $\al>0$ we denote by $f_\al\in\fn^-$ a 
weight $-\al$ element. Thus we have $\fn^-=\bigoplus_{\al>0} \bC f_\al$. Let $G$, $B$, $N$,
$N^-$ and $T$ be the Lie groups corresponding to the Lie algebras $\g$, $\fb$, $\fn$, $\fn^-$ and $\h$.

Let $\om_1,\dots,\om_l$ be the fundamental weights. The fundamental weights and simple roots are
orthogonal with respect to the Killing form $(\cdot,\cdot)$ on $\h^*$: $(\om_i,\al_j)=\delta_{i,j}$.
A dominant integral weight $\la$ is
given by $\sum_{i=1}^l m_i\om_i$, $m_i\in\bZ_{\ge 0}$. 
For a dominant integral $\la$ let
$V_\la$ be the finite-dimensional irreducible highest weight $\g$-module with highest weight $\la$ and 
a highest weight vector $v_\la$ such that $\fn v_\la=0$, $h v_\la=\la(h) v_\al$ ($h\in\h$) and
$V_\la=\U(\fn^-) v_\la$. 

The (generalized) flag varieties for $G$ are defined as quotient $G/P$ by the parabolic subgroups. These
varieties play crucial role in the geometric representation theory. An important feature of the
flag varieties is that they can be naturally embedded into the projectivization of the highest weight
modules. Namely, let $\la$ be a dominant weight such that the stabilizer of the line 
$[v_\la]$ in $G$ is equal to $P$. Here and below for a vector $v$ in a vector space $V$ we denote by 
$[v]\in\bP(V)$ the line spanned by $v$.
Then one gets the embedding $G/P\subset \bP(V_\la)$ as the $G$-orbit of
the highest weight line $[v_\la]$. For a dominant weight $\la$ we denote by $\Fl_\la\subset \bP(V_\la)$
the orbit $G[v_\la]$ of the highest weight line. These are smooth projective algebraic varieties.
 It is clear that $\Fl_\la\simeq \Fl_\mu$
if and only if for all $i$ $(\la,\om_i)=0$ is equivalent to $(\mu,\om_i)=0$.         

\subsection{Degenerate version.}
Let $\g^a$ be the degenerate Lie algebra defined as a direct sum $\fb\oplus \g/\fb$ of the Borel subalgebra
$\fb$ and abelian ideal $\g/\fb$ (see \cite{Fe1},\cite{Fe2}). The algebra $\fb$ acts on $\g/\fb$ via 
the adjoint action.
We denote the space $\g/\fb$ by $(\fn^-)^a$ ($a$ is for abelian). Let $(N^-)^a=\exp (\fn^-)^a$ be the abelian 
Lie group, which is nothing but the product of $\dim \fn^-$ copies of the group $\bG_a$ --  the
additive group of the field. Let $G^a$ be the semidirect product $B\ltimes (N^-)^a$ of the subgroup $B$
and of the normal abelian subgroup $(N^-)^a$ (the action of $B$ on $(N^-)^a$ is induced by the action of $B$ on
$\fn^-$ by conjugation).  

Similar to the classical situation, we say that  a $\g^a$-module $M$ is a highest weight module if
there exists $v\in M$ such that $\fn v=0$, the line $[v]$ is $\h$-stable and $M=S(\fn^-)v$, where $S(\fn^-)$ denotes 
the symmetric algebra of $\fn^-$, which is isomorphic to the polynomial ring $\bC[f_\al]_{\al>0}$. 
It is clear that the highest weight $\g^a$-modules are in one-to-one correspondence with the $\fb$-invariant
ideals in $\bC[f_\al]_{\al>0}$. Namely, $M$ defines the  annihilating ideal 
$\mathrm{Ann} M\subset \bC[f_\al]_{\al>0}$ and a
$\fb$-invariant ideal $I$ produces the $\g^a$-module $\bC[f_\al]_{\al>0}/I$. 
In \cite{FFoL1}, \cite{FFoL2} the modules $V_\la^a$ were studied for dominant integral $\la$. 
The module $V_\la^a$ is the associated graded module to $V_\la$ with respect to the PBW filtration.
We note that all $V_\la^a$ are highest weight modules. However, these are not all highest weight $\g^a$-modules. Below we study a wider class of representations for $\g=\msl_n$ (still not exhaustive). We note that all the 
highest weight $\g^a$-modules which show up so far, are graded compatibly with with the grading by the
total degree on $\bC[f_\al]_{\al>0}$. Summarizing, we put forward the following definition:
\begin{dfn}\label{hwm}
A $\g^a$-module $M=\bigoplus_{s\ge 0} M(s)$ is called a graded highest weight module 
if the following holds:
\begin{itemize}
\item $M(0)=\bC v, \fn v=0,\ \h v\subset M(0)$,
\item $M=\bC[f_\al]_{\al>0} v$,
\item $f_\al M(s)\subset M(s+1), \al>0$. 
\end{itemize} 
\end{dfn}  
We note that the definition implies that $\fb M(s)\subset M(s)$. In what follows we say that elements of
$M(s)$ have PBW-degree $s$.

Now we want to define degenerate version of the flag varieties. It is not reasonable to take the quotient
of $G^a$ now (for example, $G^a/B$ is simply an affine space). Instead we use the highest weight representations.
Let $M$ be a graded highest weight $\g^a$-module with a highest weight vector $v$. Then we put forward the following
definition:
\begin{dfn}
The variety $\Fl(M)$ is the closure of the $(N^-)^a$-orbit of the highest weight line:
\[
\Fl(M)=\ov{(N^-)^a\cdot [v]}\subset \bP(M).
\]
\end{dfn} 
The degenerate flag varieties $\Fl^a_\la$ are isomorphic to $\Fl(V_\la^a)$.
We note that the group $(N^-)^a$ is isomorphic to the product of $\dim\fn^-$ copies of the group
$\bG_a$ -- the additive group of the field. Therefore, all the varieties $\Fl(M)$ are the so-called
$\bG_a$-varieties, i.e. the compactifications of the abelian unipotent group (see \cite{A}, \cite{HT}). 

\subsection{The type $A$ case.}
From now on $\g=\msl_n$ and $G=SL_n$. In this case the set of positive roots $\Phi^+$ consists of the roots
\[
\al_{i,j}=\al_i + \al_{i+1} +\dots +\al_j,\ 1\le i\le j\le n-1.
\]
In what follows we use the shorthand notation $f_{i,j}$ for $f_{\al_{i,j}}$.
The following proposition is proved in \cite{FFoL1} ($\circ$ denotes the adjoint action of $\fb$ on 
$S(\fn^-)\simeq S(\fg/\fb)$).
\begin{prop}\label{ffl}
For a dominant weight $\la$ the module $V_\la^a$ is isomorphic to the quotient $S(\fn^-)/I_\la$, where
$I_\la$ is the ideal generated by the subspace
\[
\U(\fb)\circ \mathrm{span}(f_\beta^{(\beta,\la)+1}, \beta\in\Psi^+). 
\]
\end{prop}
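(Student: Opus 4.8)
\medskip\noindent\textbf{Proof proposal.} The plan is to realize $V_\la^a$ as a cyclic $S(\fn^-)$-module and to pin down its annihilator by a two-sided estimate: one inclusion is elementary $\msl_2$-theory, the other is a dimension count via an explicit monomial spanning set. Recall that the PBW filtration $F_sV_\la=\mathrm{span}\{x_1\cdots x_kv_\la:x_i\in\fn^-,\ k\le s\}$ makes $V_\la^a=\gr^F V_\la$ a graded module over $\gr\,\U(\fn^-)=S(\fn^-)$, cyclic on the symbol $\bar v_\la$ of $v_\la$. Thus $V_\la^a\cong S(\fn^-)/I_\la$ with $I_\la=\mathrm{Ann}_{S(\fn^-)}(\bar v_\la)$, and since $v_\la$ spans an $\h$-stable line annihilated by $\fn$, the ideal $I_\la$ is stable under the derivation action $\circ$ of $\fb$ on $S(\fn^-)$.

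For the inclusion $I_\la'\subseteq I_\la$, where $I_\la'$ is the ideal in the Proposition, I would note that for $\beta\in\Psi^+$ the $\msl_2$-triple $\langle e_\beta,h_\beta,f_\beta\rangle$ acts on $v_\la$ with $e_\beta v_\la=0$ and $h_\beta v_\la=(\beta,\la)v_\la$, so $f_\beta^{(\beta,\la)+1}v_\la=0$ already holds in $V_\la$; passing to symbols gives $f_\beta^{(\beta,\la)+1}\in I_\la$, and $\fb$-invariance of $I_\la$ then forces $\U(\fb)\circ f_\beta^{(\beta,\la)+1}\subset I_\la$ for every $\beta$, hence $I_\la'\subseteq I_\la$. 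This yields a graded surjection $\pi\colon S(\fn^-)/I_\la'\twoheadrightarrow V_\la^a$, so it remains to prove $\dim S(\fn^-)/I_\la'\le\dim V_\la^a=\dim V_\la$.

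For this I would proceed in two steps. (i) Fix a monomial order on $S(\fn^-)=\bC[f_\al]_{\al>0}$ and, using the relations $f_\beta^{(\beta,\la)+1}$ together with the ``mixed'' relations obtained from them under the $\U(\fb)\circ$-action, derive straightening laws showing that $S(\fn^-)/I_\la'$ is spanned by the images of the monomials $f^{\bs}=\prod_{\al>0}f_\al^{s_\al}$ whose exponent vector $\bs$ lies in the Feigin--Fourier--Littelmann polytope $P_\la$ attached to $\la$ (the lattice points cut out by the Dyck-path inequalities). (ii) Invoke the combinatorial identity $\#(P_\la\cap\bZ^{\Phi^+})=\dim V_\la$; in type $A$ this follows from the unimodular equivalence of $P_\la$ with the Gelfand--Tsetlin polytope, or, alternatively, from a fibration of $P_\la$ over a lower-rank FFLV polytope matching the $\msl_n\!\downarrow\!\msl_{n-1}$ branching rule, which permits an induction on $n$. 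Combining (i) and (ii) with $\pi$ forces $\pi$ to be an isomorphism, so $I_\la=I_\la'$.

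The main obstacle is step (i): $I_\la'$ is generated not by the single powers $f_\beta^{(\beta,\la)+1}$ but by all their $\fb$-translates, which are inhomogeneous sums $\sum_{\bs}c_{\bs}f^{\bs}$, and one must show that these suffice to rewrite every monomial with exponent vector outside $P_\la$ in terms of monomials inside it. The cleanest route is a Gr\"obner-type argument: exhibit explicit members of $I_\la'$ whose leading monomials are exactly the minimal ``non-Dyck'' monomials. Once such a spanning set of cardinality $\le\#(P_\la\cap\bZ^{\Phi^+})$ is available, linear independence of the surviving monomials in $V_\la^a$ is automatic from the dimension count in (ii), so no separate lower bound must be established.
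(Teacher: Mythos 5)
First, note that the paper does not prove this proposition at all: it is quoted verbatim from \cite{FFoL1} (Feigin--Fourier--Littelmann), so there is no internal argument to compare against. Your outline does reproduce the correct global shape of the FFL proof: the easy inclusion $I_\la'\subseteq I_\la=\mathrm{Ann}_{S(\fn^-)}\bar v_\la$ via the $\msl_2$-triples and $\fb$-stability of the annihilator, followed by a spanning-plus-counting argument that forces the surjection $S(\fn^-)/I_\la'\twoheadrightarrow V_\la^a$ to be an isomorphism. That part is fine.

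However, as a proof the proposal has genuine gaps, and you acknowledge the main one yourself: step (i), the straightening argument showing that the $\U(\fb)$-translates of the powers $f_\beta^{(\beta,\la)+1}$ rewrite every monomial outside the Dyck-path polytope $P_\la$, is the technical heart of the theorem and is left entirely open (``exhibit explicit members of $I_\la'$ whose leading monomials are exactly the minimal non-Dyck monomials'' is precisely what one must do, and it requires a delicate choice of order and of differential operators applied to the relations). Step (ii) is also not justified as stated: the FFLV polytope and the Gelfand--Tsetlin polytope are \emph{not} unimodularly equivalent in general; they are only Ehrhart-equivalent (via Stanley's piecewise-linear transfer map between marked order and marked chain polytopes), so the claimed route to $\#(P_\la\cap\bZ^{\Phi^+})=\dim V_\la$ does not work as written. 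The argument actually used in \cite{FFoL1} is different and worth knowing: in type $A$ one has the Minkowski-sum property $S_\la+S_\mu=S_{\la+\mu}$, which reduces both the count and the linear independence to the fundamental weights, where $V^a_{\om_k}\simeq\Lambda^k\bC^n$ and the polytope points visibly biject with the wedge basis; independence for general $\la$ then follows by embedding $V^a_\la$ into the tensor product of the $V^a_{\om_i}$ as the Cartan component. (Note that this Minkowski-sum mechanism is exactly what the paper points out \emph{fails} for the more general modules $M_\bm$, which is why the analogous conjectures in Section 4 remain open.)
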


It is proved in \cite{Fe2} that the degenerate flag varieties enjoy the following explicit realization.
For simplicity, we describe the case of the complete flags here. So let $\la$ be a regular dominant weight,
i.e. $(\la,\om_i)>0$ for all $i$. Fix an $n$-dimensional vector space $W$ and a basis $w_1,\dots,w_n$ of $W$.
The operators $f_{i,j}$ act on $W$ by the standard formula $f_{i,j}w_k=\delta_{i,k}w_{j+1}$.
Let $pr_k:W\to W$ be a projection along the $k$-th basis vector, i.e. $pr_k w_k=0$ and $pr_k w_j=w_j$ for
$j\ne k$. All the flag varieties $\Fl^a_\la$ are isomorphic ($\la$ is regular) and we denote this variety 
by $\Fl^a_n$. Then $\Fl^a_n$ consists of collections
$(V_1,\dots,V_{n-1})$ of subspaces of $W$ such that $\dim V_i=i$ and $pr_{i+1}V_i\subset V_{i+1}$.
The varieties $\Fl^a_n$ are singular (starting from $n=3$) projective algebraic varieties, naturally embedded
into the product of Grassmannians. A desingularization $R_n$ for $\Fl^a_n$ was constructed in \cite{FF}
(see also \cite{FFiL} for the symplectic version).
Namely, $R_n$ consists of collections $(V_{i,j})_{1\le i\le j\le n-1}$ of subspaces of $W$ subject to the following
conditions:
\begin{itemize}
\item $\dim V_{i,j}=i,\ V_{i,j}\subset \mathrm{span}(w_1,\dots,w_i,w_{j+1},\dots,w_n)$,
\item $V_{i,j}\subset V_{i+1,j},\ 1\le i<j\le n-1$,\
\item $pr_{j+1}V_{i,j}\subset V_{i,j+1},\ 1\le i\le j<n-1$.
\end{itemize}  
One can show that the variety $R_n$ is a Bott tower, i.e. there exists a tower 
$$R_n=R_n(0)\to R_n(1)\to R_n(2)\to\dots \to R_n(n(n-1)/2)=pt$$
such that each map $R_n(l)\to R_n(l+1)$ is a $\bP^1$-fibration. Hence, $R_n$ is smooth. The surjection
$R_n\to\Fl^a_n$ is given by $(V_{i,j})_{1\le i\le j\le n-1}\mapsto (V_{i,i})_{i=1}^n$.  
Let $W_{i,j}=\mathrm{span}(w_1,\dots,w_i,w_{j+1},\dots,w_n)$. Then from the definition of $R_n$ we obtain
\begin{equation}
R_n\subset \prod_{1\le i\le j\le n-1}\bP(\Lambda^iW_{i,j}).
\end{equation}

\section{Representations and  coordinate rings}
In this section we state our results and provide examples. The proofs are given in the next section.
Throughout the section $\g=\msl_n$.

\subsection{Representations.}
For a positive root $\al=\al_{i,j}$ let $M_\al$ be the $\g^a$-module defined as the quotient 
$\bC[f_\beta]_{\beta>0}/I_\al$, where $I_\al$ is the ideal generated by the subspace 
\[
U(\fb)\circ \langle f^2_\beta, \beta\ge \al; 
f_\beta, \beta\not\ge \al\rangle.
\]
(Here $\circ$ denotes the adjoint action).
We denote the highest weight vector (the image of $1$) in $M_\al$ by $v_\al$.

\begin{example}
Let $\al=\al_1+\dots +\al_{n-1}$ be the highest root. Then $M_\al$ is two-dimensional space with a basis 
$v_\al$, $f_\al v_\al$. All the elements $f_\beta$, $\beta\ne\al$ as well as $\fb\subset\g^a$, 
act trivially on $M_\al$.
\end{example}

\begin{lem}
If $\al$ is a simple root $\al_i$, then $M_{\al_i}\simeq V_{\omega_i}^a$ as $\g^a$-modules.
\end{lem}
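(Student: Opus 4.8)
The plan is to unwind both definitions and observe that the two defining ideals coincide. By Proposition~\ref{ffl}, the module $V_{\om_i}^a$ is the quotient $S(\fn^-)/I_{\om_i}$, where $I_{\om_i}\subset\bC[f_\be]_{\be>0}\simeq S(\fn^-)$ is generated by the $\U(\fb)$-span of the elements $f_\be^{(\be,\om_i)+1}$, $\be\in\Phi^+$. On the other hand $M_{\al_i}$ is by definition $\bC[f_\be]_{\be>0}/I_{\al_i}$, with $I_{\al_i}$ generated by the $\U(\fb)$-span of $f_\be^2$ for $\be\ge\al_i$ and of $f_\be$ for $\be\not\ge\al_i$. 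The $\g^a$-action on both quotients is the one induced from $\bC[f_\be]_{\be>0}$ (adjoint action of $\fb$, multiplication by the $f_\be$), so it suffices to prove $I_{\om_i}=I_{\al_i}$ as subspaces of $\bC[f_\be]_{\be>0}$; the identity map of $\bC[f_\be]_{\be>0}$ then descends to the desired $\g^a$-isomorphism, carrying $v_{\al_i}$ to the highest weight vector of $V_{\om_i}^a$.

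To compare the generating sets I would compute, for a positive root $\be=\al_{k,l}=\al_k+\dots+\al_l$, the pairing $(\be,\om_i)=\sum_{m=k}^l(\al_m,\om_i)$, which by $(\al_m,\om_i)=\delta_{m,i}$ equals $1$ if $k\le i\le l$ and $0$ otherwise. Then I would note that in type $A$ the root-order condition $\al_{k,l}\ge\al_i$ holds precisely when $k\le i\le l$: in that case $\al_{k,l}-\al_i=(\al_k+\dots+\al_{i-1})+(\al_{i+1}+\dots+\al_l)$ is a nonnegative sum of simple roots, whereas otherwise the coefficient of $\al_i$ in $\al_{k,l}-\al_i$ is $-1$. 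Hence $(\be,\om_i)+1=2$ exactly when $\be\ge\al_i$ and $(\be,\om_i)+1=1$ exactly when $\be\not\ge\al_i$, so the generators $f_\be^{(\be,\om_i)+1}$ of $I_{\om_i}$ are literally the generators $f_\be^2$ ($\be\ge\al_i$), $f_\be$ ($\be\not\ge\al_i$) of $I_{\al_i}$. Therefore $I_{\om_i}=I_{\al_i}$ and $M_{\al_i}\simeq V_{\om_i}^a$ as $\g^a$-modules.

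There is no real obstacle here beyond this bookkeeping. The only point that deserves attention is the identification of the partial-order condition $\be\ge\al_i$, used in the definition of $M_\al$, with the numerical condition $(\be,\om_i)=1$ governing the exponent in Proposition~\ref{ffl}; this is exactly the elementary type-$A$ combinatorics displayed above. (One reads $\Psi^+$ in Proposition~\ref{ffl} as $\Phi^+$.)
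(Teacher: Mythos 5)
Your proof is correct and follows exactly the route the paper intends: the paper's proof is the one-line ``Follows from Proposition \ref{ffl}'', and your computation that $(\al_{k,l},\om_i)+1$ equals $2$ precisely when $\al_{k,l}\ge\al_i$ and $1$ otherwise is just the bookkeeping that justifies that line. Nothing is missing.
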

\begin{proof}
Follows from Proposition \ref{ffl}.
\end{proof}

\begin{lem}
Let $\al=\al_{i,j}$. Then $M_\al\simeq \Lambda^i W_{i,j}$.
\end{lem}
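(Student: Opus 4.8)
I want to identify the $\g^a$-module $M_\al$ for $\al=\al_{i,j}$ with the module $\Lambda^i W_{i,j}$, where $W_{i,j}=\mathrm{span}(w_1,\dots,w_i,w_{j+1},\dots,w_n)$ carries the $\g^a$-action via $f_{k,l}w_m=\delta_{k,m}w_{l+1}$ (and $\fn$, $\h$ act in the obvious way). The natural candidate for the isomorphism is the map $\varphi\colon \bC[f_\beta]_{\beta>0}\to \Lambda^i W_{i,j}$ sending $1\mapsto w_1\wedge\cdots\wedge w_i$ and a monomial in the $f_\beta$ to its action on this vector; this is $\g^a$-equivariant by construction (the symmetric algebra $S(\fn^-)$ acts on $\Lambda^i W_{i,j}$ because the $f_\beta$ commute as operators, and $\fb$-equivariance is automatic). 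So the statement reduces to two things: (1) $\varphi$ is surjective, i.e. $\Lambda^i W_{i,j}=S(\fn^-)\cdot(w_1\wedge\cdots\wedge w_i)$; and (2) $\ker\varphi$ is exactly the ideal $I_\al$ defining $M_\al$.

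\textbf{Surjectivity.} First I would check that the operators $f_\beta$ with $\beta\ge\al_{i,j}$, applied to $w_1\wedge\cdots\wedge w_i$, generate all of $\Lambda^i W_{i,j}$ over $S(\fn^-)$. Concretely, $\beta=\al_{k,l}\ge\al_{i,j}$ means $k\le i$ and $l\ge j$, so $f_{k,l}$ sends $w_k\mapsto w_{l+1}$ with $k\in\{1,\dots,i\}$ and $l+1\in\{j+1,\dots,n\}$; these are precisely the "raising" operators taking a basis vector among $w_1,\dots,w_i$ to one among $w_{j+1},\dots,w_n$. Applying monomials in these operators (together with $\fb$, which permutes within each block and whose nilpotent part moves indices down) one reaches every wedge $w_{a_1}\wedge\cdots\wedge w_{a_i}$ with $a_1<\cdots<a_i$ drawn from $\{1,\dots,i,j+1,\dots,n\}$, hence a spanning set of $\Lambda^i W_{i,j}$. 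A clean way to organize this is induction on the number of indices $a_r$ lying in the upper block $\{j+1,\dots,n\}$.

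\textbf{Identifying the kernel.} This is the main obstacle. I need to show $\ker\varphi=I_\al$ where $I_\al=\big(U(\fb)\circ\langle f_\beta^2:\beta\ge\al;\ f_\beta:\beta\not\ge\al\rangle\big)$. The inclusion $I_\al\subseteq\ker\varphi$ is a direct check: if $\beta\not\ge\al$ then $f_\beta$ kills $w_1\wedge\cdots\wedge w_i$ (it either annihilates each $w_k$, $k\le i$, or maps it back into the span of $w_1,\dots,w_i$, producing a wedge with a repeated vector... more carefully: $\beta=\al_{k,l}$ with either $k>i$, so $f_\beta w_m=0$ for $m\le i$, or $l<j$, so $f_\beta w_k=w_{l+1}$ with $l+1\le j$, again inside $\{w_1,\dots,w_i\}$ when $l+1\le i$, or equal to $w_{l+1}$ with $i<l+1\le j$ which lies outside $W_{i,j}$ — but then this vector is not in the target at all, so equivariance forces it to act as $0$ on $\Lambda^i W_{i,j}$); and if $\beta\ge\al$ then $f_\beta^2$ kills $w_1\wedge\cdots\wedge w_i$ since $f_\beta$ is a rank-one operator sending some $w_k$ ($k\le i$) to $w_{l+1}$ ($l+1>j$, so $w_{l+1}\notin\{w_1,\dots,w_i\}$) and everything else to $0$, whence $f_\beta^2=0$ on the relevant vectors. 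Since $\ker\varphi$ is a $\fb$-stable ideal, this gives $I_\al\subseteq\ker\varphi$ and hence a surjection $M_\al\twoheadrightarrow\Lambda^i W_{i,j}$. For the reverse inclusion I would compare dimensions (equivalently, graded characters): $\dim\Lambda^i W_{i,j}=\binom{n-(j-i)}{i}$, and I must show $\dim M_\al$ is at most this. I expect this to follow from the monomial-basis description of $M_\al$ — the relations $f_\beta=0$ ($\beta\not\ge\al$) cut the polynomial ring down to $\bC[f_\beta:\beta\ge\al]$ (here one must verify that $U(\fb)\circ\langle f_\beta:\beta\not\ge\al\rangle$ already contains all $f_\gamma$ with $\gamma\not\ge\al$, which holds because the set $\{\gamma:\gamma\not\ge\al\}$ is $\fb$-stable in the appropriate sense: applying $e_m\in\fn$ to $f_{k,l}$ either gives $0$ or $\pm f_{k',l}$/$\pm f_{k,l'}$ with the root still $\not\ge\al$), and then the quadratic relations $U(\fb)\circ\langle f_\beta^2:\beta\ge\al\rangle$ impose, after unwinding the $\fb$-action, exactly that monomials $\prod f_{\beta}^{d_\beta}$ with two "overlapping" raising operators vanish — leaving a basis indexed by $i$-subsets of $\{1,\dots,i,j+1,\dots,n\}$, matching $\Lambda^i W_{i,j}$. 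Making this counting argument precise — i.e. extracting the monomial basis of $S(\fn^-)/I_\al$ from the generators of $I_\al$ — is the step that requires genuine work, and it is essentially a special case (one fundamental weight, with multiplicity governed by $\al$) of the kind of computation carried out in \cite{FFoL1}; I would try to deduce it from Proposition~\ref{ffl} and the preceding Lemma by a base-change/relabeling argument reducing $M_{\al_{i,j}}$ to a "shifted" copy of $V_{\om_i}^a$ inside a smaller space, rather than redoing the combinatorics from scratch.
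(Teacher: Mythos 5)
Your construction of the surjection $M_\al\twoheadrightarrow\Lambda^i W_{i,j}$ is essentially right, but two things deserve attention. First, a small circularity: you invoke ``equivariance'' to argue that $f_{k,l}$ with $i<l+1\le j$ acts as zero on $\Lambda^i W_{i,j}$, but $\Lambda^i W_{i,j}$ is not a $\g^a$-submodule of $\Lambda^i W$, so the module structure must be \emph{defined} before equivariance can be used. The honest definition is the one the paper uses: $\Lambda^i W_{i,j}$ is the quotient of $V^a_{\om_i}\simeq\Lambda^i W$ by the subspace $U$ spanned by the wedges $w_{l_1}\wedge\dots\wedge w_{l_i}$ having some $l_r$ with $i<l_r\le j$; one checks directly that $U$ is $\g^a$-stable, and then your operator computations become the statement that the induced action on the quotient is the expected one.

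Second, and more seriously, the reverse inclusion $\ker\varphi\subseteq I_\al$ (equivalently the bound $\dim M_\al\le\binom{n-j+i}{i}$) is the crux of the lemma and you only describe strategies for it; neither the FFoL-style monomial count nor the ``shifted copy of $V^a_{\om_i}$ in a smaller space'' is carried out, and the latter would require reconciling the Borel of $\msl_n$ with that of the smaller algebra. The quotient picture above closes this gap with no dimension count at all: by Proposition \ref{ffl}, $V^a_{\om_i}=S(\fn^-)/I_{\om_i}$ with $I_{\om_i}$ generated by $\U(\fb)\circ\langle f_\beta^2,\ \beta\ge\al_i;\ f_\beta,\ \beta\not\ge\al_i\rangle$, and $U$ is exactly the image in $V^a_{\om_i}$ of the ideal generated by $\U(\fb)\circ\langle f_\beta,\ \beta\ge\al_i,\ \beta\not\ge\al_{i,j}\rangle$ (these $f_\beta$ send $v_{\om_i}$ to the wedges with one index in $\{i+1,\dots,j\}$, and $S(\fn^-)\,\U(\fb)$ generates all of $U$ from them). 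Since $I_{\om_i}$ plus these extra generators is precisely $I_{\al_{i,j}}$ (note $\beta\ge\al_{i,j}\Rightarrow\beta\ge\al_i$, and the span of $\{f_\gamma:\gamma\not\ge\al_{i,j}\}$ is $\fb$-stable), one gets $\Lambda^i W_{i,j}\simeq\Lambda^i W/U\simeq S(\fn^-)/I_{\al_{i,j}}=M_{\al_{i,j}}$, both inclusions at once. I recommend you restructure the proof along these lines rather than leaving the dimension estimate as a promissory note.
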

\begin{proof}
Let us define the structure of $\g^a$-module on $\Lambda^i W_{i,j}$ in the following way.
Consider $V^a_{\om_i}$. Then as a vector space this module is isomorphic to $\Lambda^i W$.
It is easy to see that the subspace $U$ spanned by the vectors
\[
w_{l_1}\wedge\dots \wedge w_{l_i}, \exists r: i<l_r\le j
\]
is $\g^a$-invariant. Now clearly we can identify the quotient of $\Lambda^i W/U$ with
$\Lambda^i W_{i,j}$. In addition, this quotient is isomorphic to $M_{\al_{i,j}}$ accordiing to 
Proposition \ref{ffl}. 
\end{proof}

\begin{rem}
$1)$. We note that $M_\al=\bigoplus_{s\ge 0} M_\al(s)$ (see Definition \ref{hwm}) and $M_\al(s)$ is spanned
by the vectors
\[
w_{l_1}\wedge\dots\wedge w_{l_i},\ l_a\in\{1,\dots,i,j+1,\dots,n\},\ \#\{a:\ l_a>j\}=s.
\] 
$2)$. The operators $f_{a,b}$ act trivially on $M_{\al_{i,j}}$ unless $a\le i\le j\le b$.
\end{rem}

The modules $M_{\al_{i,j}}$ play a role of the fundamental representations for the Lie algebra $\g^a$.
We now define a larger class of representations "generated" by the fundamental ones. 
Let $\bm=(m_{i,j})_{i\le j}$ be a collection of non-negative integers. Here and in what follows we write
$i\le j$ (or $a\le b$) instead of $1\le i\le j\le n-1$ (or $1\le a\le b\le n-1$) if it does not lead to any
confusion. We put forward the following definition
\begin{dfn}
The $\g^a$-module $M_\bm$ is the top (Cartan) component in the tensor product of modules $M_{\al_{i,j}}$, i.e.:
\[
M_\bm=S(\fn^-)\cdot \bigotimes_{i\le j} v_{\al_{i,j}}^{\T m_{i,j}}\subset 
\bigotimes_{i\le j} M_{\al_{i,j}}^{\T m_{i,j}}.
\]  
\end{dfn}
We denote the highest weight vector of $M_\bm$, which is the tensor product of the vectors $v_{\al_{i,j}}$,
by $v_\bm$.

\begin{example}
Let $m_{i,j}=0$ unless $i=j$. Then according to \cite{FFoL1} we have $M_\bm\simeq V^a_\la$, where
$\la=\sum_{i=1}^{n-1} m_{i,i}\om_i$. 
\end{example}

\begin{example}
Let $m_{1,j}=1$ for $j=1,2,\dots,n-1$ and $m_{i,j}=0$ if $i>1$. Then $f_{i,j} v_\bm=0$ unless $i=1$ and
it is easy to see that $f_{1,1}^{k_1}\dots f_{1,n-1}^{k_{n-1}}v_\bm\ne 0$ if and only if 
\begin{equation}\label{Cat}
k_1\le 1,\ k_1+k_2\le 2, \dots, k_1 +\dots +k_{n-1}\le n-1.
\end{equation}
Moreover, the monomials satisfying conditions above are linearly independent. 
We note that the number of collections $(k_1,\dots,k_{n-1})$ subject to the restrictions \eqref{Cat}
is equal to the Catalan number $C_n$. In addition, the dimensions of the homogeneous components 
$\dim M_\bm(s)$ 
are given by the entries of the Catalan triangle (see e.g. http://mathworld.wolfram.com/CatalansTriangle.html).
\end{example}

The following Lemma explains the importance of the above defined modules. We say that a collection
$\bm$ is regular if $m_{i,j}>0$ for all $i\le j$.
\begin{prop}
For any regular $\bm$, the variety $R_n$ is embedded into the projective space $\bP(M_\bm)$ 
as the closure of the $SL_n^a$-orbit or $(N^-)^a$-orbit through the highest weight line:
\[
R_n=\overline{SL_n^a\cdot [v_\bm]}=\overline{(N^-)^a\cdot [v_\bm]}\subset \bP(M_\bm).
\]
\end{prop}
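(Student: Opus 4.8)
The plan is to exhibit two chains of embeddings connecting $R_n$ to $\bP(M_\bm)$ and to $\prod_{i\le j}\bP(\Lambda^iW_{i,j})$, and to identify the relevant orbit closures. First I would recall from the previous lemma that $M_{\al_{i,j}}\simeq \Lambda^i W_{i,j}$ as $\g^a$-modules, with the highest weight vector $v_{\al_{i,j}}$ corresponding to $w_1\wedge\dots\wedge w_i$. Hence for regular $\bm$ there is an obvious $(N^-)^a$-equivariant, hence $SL_n^a$-equivariant, map
\[
\bP(M_\bm)\hookrightarrow \prod_{i\le j}\bP(M_{\al_{i,j}})\simeq \prod_{i\le j}\bP(\Lambda^iW_{i,j}),
\]
obtained by restricting the Segre embedding of the tensor product $\bigotimes M_{\al_{i,j}}^{\T m_{i,j}}$ (and using that the $m_{i,j}$-fold diagonal $\bP(M_{\al_{i,j}})\hookrightarrow \bP(M_{\al_{i,j}}^{\T m_{i,j}})$ is $\g^a$-equivariant since $v_{\al_{i,j}}^{\T m_{i,j}}$ generates the Cartan component). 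Under this map the highest weight line $[v_\bm]$ goes to the product of the lines $[w_1\wedge\dots\wedge w_i]$, which is exactly the point of $R_n$ with all $V_{i,j}=\mathrm{span}(w_1,\dots,w_i)$.

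The core geometric step is then to show that the closure $\overline{(N^-)^a\cdot[v_\bm]}$ inside $\bP(M_\bm)$ maps \emph{isomorphically} onto $R_n$ inside $\prod\bP(\Lambda^iW_{i,j})$. For the surjectivity I would argue that $R_n$ is itself (by \cite{FF}) the closure of a $(N^-)^a$-orbit: the abelian group $(N^-)^a=\exp\bigoplus_{\al>0}\bC f_\al$ acts on $\prod\bP(\Lambda^iW_{i,j})$ component-wise through its action on each $W_{i,j}$ (using part 2 of the Remark, $f_{a,b}$ acts nontrivially on $M_{\al_{i,j}}$ only when $a\le i\le j\le b$), and the defining incidence conditions $V_{i,j}\subset V_{i+1,j}$ and $pr_{j+1}V_{i,j}\subset V_{i,j+1}$ cut out precisely this orbit closure through the base point above — this is the content of the Bott-tower description in \cite{FF}. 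Since the Segre-type map is a closed embedding and is $(N^-)^a$-equivariant sending base point to base point, the orbit of $[v_\bm]$ maps bijectively onto the orbit of the base point, and passing to closures (both sides irreducible, the map proper) gives $\overline{(N^-)^a\cdot[v_\bm]}\xrightarrow{\sim}R_n$.

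For the equality $\overline{SL_n^a\cdot[v_\bm]}=\overline{(N^-)^a\cdot[v_\bm]}$ I would use that $[v_\bm]$ is $\h$-stable and $\fn$-fixed, so $B=TN$ fixes the line $[v_\bm]$; therefore $SL_n^a\cdot[v_\bm]=(B\ltimes(N^-)^a)\cdot[v_\bm]=(N^-)^a\cdot B\cdot[v_\bm]=(N^-)^a\cdot[v_\bm]$, using that $B$ normalizes $(N^-)^a$ and fixes $[v_\bm]$; and since $(N^-)^a$ is already the full unipotent radical the $B$-translates of the orbit close up to a single closure. Finally I should check that these orbit closures are independent of the choice of regular $\bm$: any two regular $\bm^1,\bm^2$ satisfy $M_{\bm^1+\bm^2}\subset M_{\bm^1}\T M_{\bm^2}$ with $v_{\bm^1+\bm^2}\mapsto v_{\bm^1}\T v_{\bm^2}$, which gives compatible equivariant closed embeddings of the orbit closures, and all of them coincide with $R_n$ by the above; this is harmless but should be noted. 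The main obstacle is the second step — verifying carefully that the incidence relations defining $R_n$ coincide with the Zariski closure of the $(N^-)^a$-orbit, rather than a proper subvariety or a larger one — and here I would lean on the explicit coordinates: the monomials $f_{i,j}$ act on $\Lambda^iW_{i,j}$ by the elementary formula $f_{i,j}w_k=\delta_{i,k}w_{j+1}$, so the orbit map is given by explicit polynomial (in fact, in suitable affine charts, monomial/linear) functions, and one computes directly that its image is open dense in $R_n$, matching dimensions $\dim R_n=\#\{(i,j):1\le i\le j\le n-1\}=n(n-1)/2=\dim(N^-)^a$.
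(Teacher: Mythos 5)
Your argument is essentially the paper's: reduce to $\bm={\bf 1}$, identify the $(N^-)^a$-orbit with the open cell of $R_n$ where the coordinates $X_{1,\dots,i}(V_{i,j})$ do not vanish (the paper cites Lemmas 2.11 and 2.12 of \cite{FF} for exactly this point), and use that $B$ fixes $[v_\bm]$ and normalizes $(N^-)^a$ to equate the two orbits. One imprecision you should repair: the map $\bP(M_\bm)\hookrightarrow\prod_{i\le j}\bP(M_{\al_{i,j}})$ you invoke does not exist as stated --- the Segre embedding goes in the opposite direction, and $M_\bm$ contains indecomposable tensors already for $n=3$ (e.g.\ $f_{1,2}v_{\bf 1}$ is a sum of three decomposable tensors and is not itself decomposable). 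What you actually need, and what is true, is that the orbit $(N^-)^a\cdot[v_\bm]$ consists of decomposable tensors because the group acts diagonally on $\bigotimes M_{\al_{i,j}}^{\T m_{i,j}}$, so the orbit and hence its closure lie in the (closed) Segre image of $\prod_{i\le j}\bP(M_{\al_{i,j}})^{\times m_{i,j}}$, which via the diagonal is equivariantly isomorphic to $\prod_{i\le j}\bP(\Lambda^iW_{i,j})$; restricting your comparison to the orbit closure fixes the argument without changing anything else.
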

\begin{proof}
It suffices to show that our proposition holds if all $m_{i,j}=1$. We note that the orbits 
$SL_n^a\cdot [v_\bm]$ and $(N^-)^a\cdot [v_\bm]$ coincide and are embedded into the product of Grassmannians
$\prod_{i\le j} Gr_i(W_{i,j})$. Recall that we have fixed a basis $w_1,\dots,w_n$ in $W$. For a collection
$$L=(l_1,\dots,l_i)\subset \{1,\dots,i,j+1,\dots,n\}$$
 and an element $V_{i,j}\in Gr_i(W_{i,j})$, let 
$X_L(V_{i,j})$ be the Pl\"ucker coordinate of $V_{i,j}$. 
We claim that the orbit $(N^-)^a [v_\bm]$ consists exactly of the collections of subspaces
$(V_{i,j})_{i\le j}\in R_n$ such that $X_{1,\dots,i}(V_{i,j})\ne 0$ (see Lemma 2.11 and Lemma 2.12 of \cite{FF}).
This implies the proposition.
\end{proof}
In what follows we consider $R_n$ as being embedded into $\bP(M_\bm)$ with $m_{i,j}=1$ for all $i\le j$,
or, equivalently, being embedded into $\prod_{\al>0} \bP(M_\al)$. We denote such a collection by ${\bf 1}$.

\subsection{Coordinate ring.}
We denote  by $X^{(i,j)}_L=X^{(i,j)}_{l_1,\dots,l_i}$ the dual (Pl\"ucker) coordinates in $W_{i,j}$ with respect
to $w_{l_1}\wedge\dots\wedge w_{l_i}$, where 
$1\le l_1<\dots <l_i\le n$ and $L\subset \{1,\dots,i,j+1,\dots,n\}$. For a permutation 
$\sigma\in S_n$ we set 
\[
X^{(i,j)}_{l_{\sigma(1)},\dots,l_{\sigma(i)}}=(-1)^\sigma X^{(i,j)}_{l_1,\dots,l_i}.
\]
The PBW-degree of a variable $X^{(i,j)}_L$ is defined as the number of entries $l_a$ such that $l_a>j$.
\begin{rem}
The variables of the PBW-degree zero are of the form $X^{(i,j)}_{1,\dots,i}$. The variables of PBW-degree one
are of the form $X^{(i,j)}_{1,\dots,r-1,r+1,\dots,i,m}$ for some $r\le i\le j<m$.
\end{rem}

Let us give explicit description of the $SL_n^a$-orbit through the highest weight line in 
$M_{\bf 1}$.   
For a collection of complex numbers $(c_{a,b})_{a\le b}$ consider the element 
$$\exp(\sum_{a\le b} c_{a,b} f_{a,b})\in (N^-)^a.$$ 
Let $L=(1\le l_1<\dots <l_i\le n)$ and let 
$d$ be a number such that  $l_d\le i < l_{d+1}$. We consider the numbers 
$1\le r_1<\dots <r_{i-d}\le i$ such that 
\[
\{r_1,\dots,r_{i-d}\}\cup\{l_1,\dots,l_d\}=\{1,\dots,i\}.
\]

\begin{lem}\label{det}
For $L\subset \{1,\dots,i,j+1,\dots,n\}$, the Pl\"ucker coordinate $X^{(i,j)}_L$ 
of the point $\exp(\sum_{a\le b} c_{a,b} f_{a,b}) [v_{\bf 1}]$ is given by
\[
(-1)^{\sum_{p=1}^d (l_p-p)}\det (c_{r_a,l_{d+b}-1})_{a,b=1}^{i-d}.
\]
\end{lem}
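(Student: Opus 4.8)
The plan is to compute the Plücker coordinate $X^{(i,j)}_L$ directly from the action of the abelian group element $g=\exp(\sum_{a\le b} c_{a,b} f_{a,b})$ on the highest weight line, using the explicit model $M_{\al_{i,j}}\simeq \Lambda^i W_{i,j}$ established in the previous lemma. Since $M_{\bf 1}$ is the Cartan component inside $\bigotimes_{i\le j} M_{\al_{i,j}}$ and the coordinates $X^{(i,j)}_L$ only depend on the $(i,j)$-th tensor factor, it suffices to understand $g\cdot v_{\al_{i,j}}$ inside $\Lambda^i W_{i,j}$, i.e. to understand $g$ acting on $w_1\wedge\dots\wedge w_i$. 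First I would record how the $f_{a,b}$ act on $W$: by the stated formula $f_{a,b} w_k=\delta_{a,k} w_{b+1}$, so each $f_{a,b}$ sends $w_a$ to $w_{b+1}$ and kills all other basis vectors; in particular any product $f_{a,b} f_{a',b'}$ with $a=a'$ vanishes on $W$, and more generally the $f_{a,b}$ span a space of operators that is nilpotent and whose composites are severely restricted. Because the group is abelian and these operators square to zero (and products with a repeated lower index vanish) on $W$, the exponential collapses: $g\cdot w_k = w_k + \sum_{b\ge k} c_{k,b} w_{b+1}$, i.e. $g$ acts on $W$ as the linear map $w_k\mapsto w_k+\sum_{b\ge k}c_{k,b}\,w_{b+1}$.

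Next I would compute $g\cdot(w_1\wedge\dots\wedge w_i)=\bigwedge_{k=1}^i\bigl(w_k+\sum_{b\ge k}c_{k,b}w_{b+1}\bigr)$ by expanding the wedge product. A term in the expansion is obtained by choosing, for each $k\in\{1,\dots,i\}$, either to keep $w_k$ or to replace it by some $w_{b+1}$ with $b\ge k$; the resulting wedge is (up to sign) $w_{l_1}\wedge\dots\wedge w_{l_i}$ for the appropriate index set. To read off the coefficient of a fixed $w_{l_1}\wedge\dots\wedge w_{l_i}$ with $l_1<\dots<l_i$, I split $\{1,\dots,i\}$ as $\{l_1,\dots,l_d\}\sqcup\{r_1,\dots,r_{i-d}\}$ where $l_1<\dots<l_d\le i<l_{d+1}$: the indices $l_1,\dots,l_d$ are exactly the "kept" basis vectors (they already lie in $\{1,\dots,i\}$ and cannot be produced as $w_{b+1}$ since $b+1\ge 2$ could in principle hit them, but matching which summand produces which index is where care is needed), while each $r_a$ must be replaced by some $w_{l_{d+c}}$, contributing the factor $c_{r_a,l_{d+c}-1}$. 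Summing over all bijections between $\{r_1,\dots,r_{i-d}\}$ and $\{l_{d+1},\dots,l_i\}$ produces exactly the determinant $\det(c_{r_a,l_{d+b}-1})_{a,b=1}^{i-d}$, with the permutation signs assembling into the determinant expansion.

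The remaining point is the overall sign $(-1)^{\sum_{p=1}^d(l_p-p)}$. This comes from reordering the wedge factors: after the expansion, the kept vectors $w_{l_1},\dots,w_{l_d}$ sit in positions $l_1,\dots,l_d$ among the original slots $1,\dots,i$, whereas in the target monomial $w_{l_1}\wedge\dots\wedge w_{l_i}$ they must be moved to the front in increasing order; the number of transpositions needed is $\sum_{p=1}^d (l_p-p)$, since moving $w_{l_p}$ past the $r_a$'s smaller than it accounts for $l_p-p$ swaps once one processes them in order. I would make this precise by an induction on $d$ or by the standard shuffle-sign computation. I expect the sign bookkeeping to be the main obstacle: one has to carefully separate the sign coming from the shuffle of "kept" versus "replaced" slots from the sign absorbed into the determinant, and confirm there is no double counting. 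Everything else — the collapse of the exponential, the determinant emerging from the expansion — is a routine consequence of $f_{a,b}w_k=\delta_{a,k}w_{b+1}$ and multilinearity of $\Lambda^i$.
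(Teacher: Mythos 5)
Your overall strategy coincides with the paper's (very terse) proof: linearize the exponential, expand $\bigwedge_{k=1}^i g\,w_k$ by multilinearity, and collect the coefficient of $w_{l_1}\wedge\dots\wedge w_{l_i}$ as a determinant times a shuffle sign. Your second and third paragraphs — the bijection count assembling into $\det(c_{r_a,l_{d+b}-1})$ and the sign $(-1)^{\sum_{p=1}^d(l_p-p)}$ obtained by moving each kept factor $w_{l_p}$ past the $l_p-p$ replaced slots preceding it — are correct and supply exactly the bookkeeping the paper leaves implicit.

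There is, however, a genuine gap in your first step, the collapse of the exponential, and it sits precisely at the point where the degenerate situation differs from the classical one. On $W$ with the operators $f_{a,b}w_k=\delta_{a,k}w_{b+1}$, pairwise products do \emph{not} all vanish: for instance $f_{2,2}f_{1,1}w_1=f_{2,2}w_2=w_3$, so $\bigl(\sum_{a\le b}c_{a,b}f_{a,b}\bigr)^2\neq 0$ on $W$, the operators do not commute there, and $\exp\bigl(\sum c_{a,b}f_{a,b}\bigr)$ computed literally on $W$ is a full lower-unipotent matrix whose entries are polynomials of higher degree in the $c_{a,b}$. The reasons you give (each $f_{a,b}$ squares to zero, products with a repeated lower index vanish, the group is abelian) do not exclude these cross terms, and taking your argument at face value would yield the classical Pl\"ucker minors of that unipotent matrix rather than the stated determinant. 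The correct justification is that one must exponentiate the \emph{degenerate} action on $M_{\al_{i,j}}\simeq\Lambda^i W_{i,j}$: there only the $f_{a,b}$ with $a\le i\le j\le b$ act nontrivially (Remark 2.4(2)), and each such operator maps $W_{i,j}$ into $\mathrm{span}(w_{j+1},\dots,w_n)$ while annihilating that span, so the product of any two of them is zero and the exponential of their sum is $\mathrm{id}$ plus the linear term. This gives $g\,w_k=w_k+\sum_{b\ge j}c_{k,b}w_{b+1}$ in $W_{i,j}$ (note $b\ge j$, not $b\ge k$; the extra linear terms in your formula are harmless, as they die in $W_{i,j}$ or by antisymmetry, but the absence of the quadratic terms is the substantive claim and must be argued as above).
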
 
\begin{proof}
We note that $f_{a,b} w_c=\delta_{a,c} w_{b+1}$. Therefore in $W_{i,j}$ one has:
\begin{multline*}
\exp(\sum_{a\le b} c_{a,b} f_{a,b}) w_1\wedge\dots\wedge w_i\\=
(w_1+c_{1,j}w_{j+1}+\dots +c_{1,n-1}w_n)\wedge\dots\wedge (w_i+c_{i,j}w_{j+1}+\dots + c_{i,n} w_n).
\end{multline*}
\end{proof}

\begin{example}
Let $d=i-1$, i.e. $l_{i-1}\le i$ and $l_i>j$. Let $\{r\}=\{1,\dots,i\}\setminus L$. Then
\begin{equation}\label{deg1}
X^{(i,j)}_L\left(\exp(\sum_{a\le b} c_{a,b} f_{a,b})[v_{\bf 1}]\right)=(-1)^{i-r} c_{r,l_i-1}.
\end{equation}
\end{example}

Let $\bC{\bf X}_n$ be the ring of multi-homogeneous polynomials in variables 
$X^{(i,j)}_L$, $1\le i\le j\le n-1$, $L=(l_1,\dots,l_i)$. In other words, $\bC{\bf X}_n$
is the coordinate ring of the product of projective spaces $\bP(W_{i,j})$, $1\le i\le j\le n-1$.
We fix the decomposition 
\[
\bC{\bf X}_n=\bigoplus_{\bm\in\bZ^{n(n-1)/2}} \bC{\bf X}_\bm,
\] 
where $\bC{\bf X}_\bm$ is the space of polynomials of total degree $m_{i,j}$ in all variables $X^{(i,j)}_L$.
For an ideal $I\subset \bC{\bf X}_n$ we denote by $X(I)\subset \prod_{i\le j} \bP(\Lambda^i W_{i,j})$ the  
variety of zeros of $I$ and for a subvariety $X$ in the product of projective spaces we denote by $I(X)$ the
ideal of multi-homogeneous polynomials vanishing on $X$.  

Our goal is to find the ideal $I(R_n)$ in  $\bC{\bf X}_n$ of multi-homogeneous polynomials vanishing on $R_n$.
Then the coordinate ring $Q_n$ of $R_n$ is given by 
$$Q_n=\bC{\bf X}_n/I(R_n).$$
We first describe the analogues of the Pl\"ucker relations contained in $I(R_n)$. 

The relations are labeled by the following data:
\begin{itemize}
\item two pairs $(i_1,j_1)$, $(i_2,j_2)$ with $i_1\ge i_2$ and $1\le i_1\le j_1\le n-1$, 
$1\le i_2\le j_2\le n-1$;
\item an integer $k$ satisfying $1\le k\le i_2$;
\item two collections of indices $L=(l_1,\dots,l_{i_1})$, $P=(p_1,\dots,p_{i_2})$ with 
$L\subset\{1,\dots,i_1,j_1+1,\dots,n\}$, $P\subset\{1,\dots,i_2,j_2+1,\dots,n\}$.
\end{itemize}
We denote the relation labeled by the data above by $R^{(i_1,j_1),(i_2,j_2);k}_{L,J}$. This is a polynomial
in the ring $\bC{\bf X}_n$ given by the following formulas:

First, let $i_1\ne i_2$ and $j_1\ge j_2$. Then 
\[
R^{(i_1,j_1),(i_2,j_2);k}_{L,P}=X^{(i_1,j_1)}_L X^{(i_2,j_2)}_P - 
\sum_{1\le r_1<\dots <r_k\le n} X^{(i_1,j_1)}_{L'} X^{(i_2,j_2)}_{P'},
\]
where the summation rums over $r_i$ satisfying $l_{r_i}\in\{1,\dots,i_2,j_1+1,\dots,n\}$ and for a term
$X^{(i_1,j_1)}_{L'} X^{(i_2,j_2)}_{P'}$ corresponding to the sequence $(r_1,\dots,r_k)$ one has
\begin{gather}\label{L'}
L'=(l_1,\dots,l_{r_1-1},p_1,l_{r_1+1},\dots,l_{r_k-1},p_k,l_{r_k+1},\dots,l_{i_1}),\\
\label{J'} P'=(l_{r_1},\dots,l_{r_k},p_{k+1},\dots,p_i).
\end{gather}

Second, let $j_1\le j_2$. Then we take the relations as above with an additional restriction 
$L\subset \{1,\dots,i_1,j_2+1,\dots,n\}$.

\begin{rem}
We note that the initial term $X^{(i_1,j_1)}_L X^{(i_2,j_2)}_J$ is present in the relations if and 
only if $\{p_1,\dots,p_k\}\subset \{1,\dots,i_2,j_1+1,\dots,n\}$.
\end{rem}

\begin{example} Let $n=3$. Then $\bC{\bf X}_3$ the ring of multi-homogeneous 
polynomials in three groups of variables 
\[
X^{(1,1)}_1, X^{(1,1)}_2, X^{(1,1)}_3;\quad X^{(1,2)}_1, X^{(1,2)}_3; \quad
X^{(2,2)}_{1,2}, X^{(2,2)}_{1,3}, X^{(2,2)}_{2,3}. 
\]
The Pl\"ucker relations are given by
\begin{gather*}
R^{(1,1),(1,2);1}_{(3),(1)}=X^{(1,1)}_3 X^{(1,2)}_1 - X^{(1,1)}_1 X^{(1,2)}_3,\\
R^{(2,2),(1,1);1}_{(2,3),(1)}=X^{(2,2)}_{2,3} X^{(1,1)}_1 + X^{(2,2)}_{1,2} X^{(1,1)}_3,\\
R^{(2,2),(1,2);1}_{(2,3),(1)}=X^{(2,2)}_{2,3} X^{(1,2)}_1 + X^{(2,2)}_{1,2} X^{(1,2)}_3.
\end{gather*} 
\end{example}

\begin{example}
Let $n=4$. Then the only relation containing variables $X^{(2,2)}_{l_1,l_2}$ and $X^{(1,3)}_{p}$
is
\[
R^{(2,2),(1,3);1}_{(1,2),(4)}=X^{(2,2)}_{1,2}X^{(1,3)}_4 + X^{(2,2)}_{2,4}X^{(1,3)}_1.
\]
\end{example}

\begin{example}\label{Gr}
Fix a pair $1\le i\le j\le n-1$. Then the ideal in $\bC[X^{(i,j)}_{l_1,\dots,l_i}]$ generated by the relations
$R^{(i,j),(i,j);k}_{L,P}$ is exactly the ideal defining the Grassmann variety $Gr_i(W_{i,j})$.
\end{example} 

\begin{prop}
The polynomials $R^{(i_1,j_1),(i_2,j_2);k}_{L,P}$ vanish on $R_n$.
\end{prop}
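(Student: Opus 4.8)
The plan is to turn the statement into a universal identity among the minors of one matrix and to deduce that identity from the classical Pl\"ucker relations. First I would reduce to the open orbit: by the previous proposition, under the embedding $R_n\subset\prod_{i\le j}\bP(\Lambda^iW_{i,j})$ the variety $R_n$ is the closure of the single orbit $(N^-)^a\cdot[v_{\bf 1}]$, and the zero locus of a multi-homogeneous polynomial is closed, so it suffices to check that each $R^{(i_1,j_1),(i_2,j_2);k}_{L,P}$ vanishes at every point of the form $\exp\bigl(\sum_{a\le b}c_{a,b}f_{a,b}\bigr)\cdot[v_{\bf 1}]$. Fixing the scalars $c_{a,b}$ for $1\le a\le b\le n-1$ and collecting them into the upper-triangular $(n-1)\times(n-1)$ matrix $C=(c_{a,b})$ (with $c_{a,b}=0$ for $a>b$), Lemma \ref{det} says that the value of $X^{(i,j)}_L$ at such a point equals, up to the explicit sign $(-1)^{\sum_{p=1}^d(l_p-p)}$, the minor $\det(c_{r_a,\,l_{d+b}-1})_{a,b=1}^{i-d}$ of $C$, where $l_{d+1},\dots,l_i$ are the entries of $L$ exceeding $i$ and $r_1,\dots,r_{i-d}$ is the complement of $\{l_1,\dots,l_d\}$ in $\{1,\dots,i\}$. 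Hence, after substitution, every monomial appearing in the relation is a product of two signed minors of the \emph{same} matrix $C$.

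Next I would recognize the resulting identity. The initial term $X^{(i_1,j_1)}_LX^{(i_2,j_2)}_P$ becomes a sign times the product of the minor of $C$ on the rows $\{1,\dots,i_1\}$ with the entries $\le i_1$ of $L$ deleted and columns indexed by the entries $>i_1$ of $L$ (shifted by one), and the analogous minor built from $P$. Each right-hand summand is obtained by moving $k$ of the entries $l_{r_t}$ of $L$ (exactly those with $l_{r_t}\in\{1,\dots,i_2,j_1+1,\dots,n\}$) into $P$ and the first $k$ entries of $P$ into their vacated slots; on the level of $C$ this is precisely an exchange of $k$ indices between the two minors, where an $l_{r_t}\le i_2$ being moved is a row-exchange (it changes which rows are deleted) and an $l_{r_t}>j_1$ is a column-exchange. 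The membership condition on the $l_{r_t}$ is exactly what makes the two new index sets admissible for $W_{i_1,j_1}$ and $W_{i_2,j_2}$, and in the case $j_1\le j_2$ the extra restriction $L\subset\{1,\dots,i_1,j_2+1,\dots,n\}$ does the same on the $P$-side. Granting this dictionary, the required identity is an instance of the Grassmann--Pl\"ucker (incidence) relations valid for the minors of an arbitrary matrix, provable by a Laplace expansion along the exchanged rows/columns; the diagonal case $(i_1,j_1)=(i_2,j_2)$ is already Example \ref{Gr}.

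The hard part will be that last step, namely pinning down this dictionary precisely: tracking how the ``small'' entries $l_{r_t}\le i_2$ behave as row-exchanges rather than column-exchanges, reconciling the two cases $j_1\ge j_2$ and $j_1\le j_2$, and above all checking that the signs $(-1)^{\sum_p(l_p-p)}$ furnished by Lemma \ref{det}, together with the antisymmetry normalization $X^{(i,j)}_{l_{\sigma(1)},\dots,l_{\sigma(i)}}=(-1)^{\sigma}X^{(i,j)}_{l_1,\dots,l_i}$, combine so that the two sides of the relation carry matching signs after substitution. Once this bookkeeping is carried out, the underlying determinantal identity is entirely standard.
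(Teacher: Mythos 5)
Your proposal follows the same route as the paper: reduce to the dense orbit $(N^-)^a\cdot[v_{\bf 1}]$, substitute the determinantal formula of Lemma \ref{det}, and conclude from a determinantal exchange identity for minors of the matrix $(c_{a,b})$ --- the paper cites this as the Sylvester identity (Fulton, Section 8.1, Lemma 2), which is precisely the minor-exchange identity you invoke. The paper is in fact terser than you are about the row-versus-column and sign bookkeeping, so your write-up is, if anything, a more careful version of the same argument.
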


Let $J_n\subset \bC{\bf X}_n$ be the ideal generated by the Pl\"ucker relations  above.  
We will prove the following theorem.
\begin{thm}
The ideal $I(R_n)$ is the saturation of $J_n$. More precisely, for any $F\in I(R_n)$ there exist
non-negative integers $N_{i,j}$, $i\le j$, such that 
\[
\prod_{i\le j} \left(X^{(i,j)}_{1,\dots,i}\right)^{N_{i,j}} F\in J_n. 
\]
\end{thm}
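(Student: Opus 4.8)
The plan is to prove the theorem by a descending induction argument that directly mimics the classical straightening-law proof that Pl\"ucker relations cut out a flag variety, but carried out one Grassmannian at a time along the Bott tower structure of $R_n$. First I would establish the easy inclusion $J_n\subset I(R_n)$ (this is already the content of the preceding Proposition). Then I would fix $F\in I(R_n)$ and, after multiplying by a suitable power of $\prod_{i\le j}X^{(i,j)}_{1,\dots,i}$, try to show that the result lies in $J_n$. The key observation is that on the open subset $U=\{X^{(i,j)}_{1,\dots,i}\ne 0\ \forall i\le j\}$ the variety $R_n$ coincides with the $(N^-)^a$-orbit through $[v_{\bf 1}]$, by the Proposition on the orbit realization; and by Lemma~\ref{det} the Pl\"ucker coordinates of a point in this orbit are \emph{explicit polynomials} (single determinants) in the affine coordinates $c_{a,b}$. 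Thus on $U$ we can use the degree-one relations \eqref{deg1} to express each $c_{r,m-1}$ as a ratio $X^{(i,j)}_{1,\dots,\widehat r,\dots,i,m}/X^{(i,j)}_{1,\dots,i}$, and then every higher Pl\"ucker coordinate $X^{(i,j)}_L$ becomes, via the determinantal formula, a polynomial in these ratios. This gives, for each variable $X^{(i,j)}_L$ of PBW-degree $s$, a congruence
\[
X^{(i,j)}_{1,\dots,i}^{\,s}\, X^{(i,j)}_L \equiv (\text{polynomial in degree-}{\le}1\text{ variables}) \pmod{J_n},
\]
which is precisely a straightening rule that lets one push an arbitrary monomial towards a normal form supported on the ``small'' (degree $0$ and $1$) variables, at the cost of multiplying by powers of the $X^{(i,j)}_{1,\dots,i}$.

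The main steps, in order, are: (1) verify that the relations $R^{(i_1,j_1),(i_2,j_2);k}_{L,P}$ are exactly the relations obtained by equating the determinantal expressions of Lemma~\ref{det} — i.e.\ that modulo $J_n$ every $X^{(i,j)}_L$ reduces, after multiplying by the appropriate power of $X^{(i,j)}_{1,\dots,i}$, to its ``determinant of degree-one variables'' form; (2) deduce a normal form: show that modulo $J_n$, and after multiplying by a monomial in the $X^{(i,j)}_{1,\dots,i}$, any element of $\bC{\bf X}_n$ is congruent to a polynomial in the variables of PBW-degree $0$ and $1$ only; (3) among those, the degree-$0$ variables $X^{(i,j)}_{1,\dots,i}$ act as the homogenizing/localization variables and the degree-$1$ variables, by \eqref{deg1}, are (up to sign and a unit) just the coordinates $c_{a,b}$, which are algebraically independent on the orbit; hence the reduced normal form of $F$, being a multi-homogeneous polynomial vanishing on the dense orbit $U\cap R_n$, must vanish identically; (4) tracking the multipliers used in steps (1)–(2) produces the explicit exponents $N_{i,j}$ and shows $\prod X^{(i,j)}_{1,\dots,i}{}^{N_{i,j}}F\in J_n$. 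Along the way one uses Example~\ref{Gr} to know that for a single pair $(i,j)$ the relations $R^{(i,j),(i,j);k}_{L,P}$ already define $Gr_i(W_{i,j})$, so the ``diagonal'' part of the straightening is the classical one; the genuinely new input is the mixed relations with $(i_1,j_1)\ne(i_2,j_2)$, which encode the incidence conditions $V_{i,j}\subset V_{i+1,j}$ and $pr_{j+1}V_{i,j}\subset V_{i,j+1}$ defining $R_n$.

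I expect the main obstacle to be step (1): checking that the finite list of relations $R^{(i_1,j_1),(i_2,j_2);k}_{L,P}$ is \emph{sufficient} to perform the reduction, rather than merely necessary. Concretely, one must show that the quadratic relations generate enough congruences that, starting from any monomial, a finite sequence of substitutions (each multiplying by one factor $X^{(i,j)}_{1,\dots,i}$) terminates at the normal form; this requires a termination/confluence argument — e.g.\ a monomial order under which the leading term of $R^{(i_1,j_1),(i_2,j_2);k}_{L,P}$ is $X^{(i_1,j_1)}_L X^{(i_2,j_2)}_P$ and every reduction strictly decreases it, with the minimal monomials being exactly products of degree-$\le 1$ variables. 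Setting up this order carefully (it has to be compatible both with the PBW-degree filtration and with the lexicographic ordering of the index sets $L$, and must respect the asymmetry $i_1\ge i_2$, $j_1\ge j_2$ built into the labelling) is where the real work lies; once it is in place, steps (2)–(4) are essentially bookkeeping, using Lemma~\ref{det} and the orbit realization Proposition to identify the terminal ring with the coordinate ring of the dense cell.
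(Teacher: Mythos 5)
Your overall strategy is the same as the paper's: reduce, modulo $J_n$ and at the cost of multiplying by powers of the $X^{(i,j)}_{1,\dots,i}$, to a polynomial $G$ in the PBW-degree $\le 1$ variables (the paper's Proposition \ref{G}), and then argue that $G$ must vanish because the degree-one variables are essentially the affine coordinates $c_{a,b}$ of the dense cell (the paper does this via the homomorphism $\Psi$ into $\bC[T_{i,j},Z_{i,j}]$ and Proposition \ref{vanish}). However, your step (3) has a genuine gap: the degree-one variables are \emph{not} algebraically independent on the orbit. For a fixed pair $r<m$, \emph{every} variable $X^{(i,j)}_{1,\dots,r-1,r+1,\dots,i,m}$ with $r\le i\le j<m$ restricts on the cell to $(-1)^{i-r}c_{r,m-1}$ (times the homogenizing factor), so there are many distinct degree-one variables representing the same coordinate, and a nonzero polynomial in degree-$\le 1$ variables can perfectly well vanish on $R_n$ (e.g.\ $X^{(i_1,j_1)}_{1,\dots,\widehat r,\dots,i_1,m}X^{(i_2,j_2)}_{1,\dots,i_2}-(-1)^{i_1+i_2}X^{(i_1,j_1)}_{1,\dots,i_1}X^{(i_2,j_2)}_{1,\dots,\widehat r,\dots,i_2,m}$). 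To close the gap you need a second reduction step, using exactly these mixed relations $R^{(i_1,j_1),(i_2,j_2);1}$, to arrange that for each pair $(r,m)$ only \emph{one} variable of the form $X^{(i,j)}_{1,\dots,r-1,r+1,\dots,i,m}$ occurs in $G$; only then does vanishing on the cell force $G=0$. This is precisely the second assertion of the paper's Proposition \ref{G}, and it is the actual role of the mixed relations in the saturation argument — not merely to encode the incidence conditions, as you suggest.

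A second, smaller point: you identify the main difficulty as a termination/confluence argument for a straightening law. No confluence is needed. The paper only requires \emph{existence} of some normal form $G$ (not uniqueness), and termination is immediate because each application of $R^{(i,j),(i,j);1}_{L,(r,1,\dots,r-1,r+1,\dots,i)}$ strictly lowers the PBW-degree of the variable being rewritten. The vanishing of the normal form is then deduced from the explicit parametrization of the cell (Lemma \ref{det}), not from a Gr\"obner-type minimality argument. So the place where care is genuinely needed is the redundancy among degree-one variables described above, not the rewriting order.
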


We put forward the following conjecture.
\begin{conj}
The ideal $J_n$ is prime, i.e. $I(R_n)=J_n$.
\end{conj}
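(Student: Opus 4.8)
The plan is to prove the theorem in two halves: first that $J_n \subset I(R_n)$, which is exactly the preceding Proposition (the Pl\"ucker-type relations $R^{(i_1,j_1),(i_2,j_2);k}_{L,P}$ vanish on $R_n$), and second that $I(R_n)$ is contained in the saturation of $J_n$ with respect to the product of the PBW-degree-zero coordinates $\prod_{i\le j} X^{(i,j)}_{1,\dots,i}$. For the second half, I would argue geometrically: let $U = \{ \prod_{i\le j} X^{(i,j)}_{1,\dots,i} \ne 0\}$ be the distinguished affine open set in $\prod_{i\le j}\bP(\Lambda^i W_{i,j})$. By the Proposition in Section 2 describing $R_n$ as $\overline{(N^-)^a\cdot[v_{\bf 1}]}$, together with Lemma 2.11--2.12 of \cite{FF} quoted there, the open cell $R_n \cap U$ is precisely the $(N^-)^a$-orbit of $[v_{\bf 1}]$, and Lemma \ref{det} gives an explicit parametrization of this orbit: every Pl\"ucker coordinate $X^{(i,j)}_L$ is a polynomial (in fact a minor) in the coordinates $c_{a,b}$ on $(N^-)^a \simeq \bA^{n(n-1)/2}$, where $c_{a,b} = \pm X^{(a,a)}_{1,\dots,a-1,b+1}$ up to the normalization $X^{(i,j)}_{1,\dots,i}=1$. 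Thus $R_n\cap U$ is the image of an explicit polynomial map from affine space, hence irreducible, and $R_n$ is its closure.

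The key step is then to show that the affine coordinate ring $\bC[R_n\cap U]$ is the dehomogenization of $\bC{\bf X}_n/J_n$ at $U$, i.e. that modulo $J_n$ and after setting all $X^{(i,j)}_{1,\dots,i}=1$, every coordinate $X^{(i,j)}_L$ is expressed as the determinant $\pm\det(c_{r_a,l_{d+b}-1})$ of Lemma \ref{det} in the degree-one coordinates $c_{a,b}$. I would establish this by induction on the PBW-degree of $X^{(i,j)}_L$ (the number of entries $l_a>j$): the relations $R^{(i,j),(a,a);1}_{L,P}$ with suitable $P$, once dehomogenized, express $X^{(i,j)}_L$ with PBW-degree $s$ as a sum of products $X^{(i,j)}_{L'}\cdot c$ with $L'$ of PBW-degree $s-1$ and $c$ a degree-one coordinate; unwinding this recursion reproduces exactly the Laplace/cofactor expansion of the determinant in Lemma \ref{det}. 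This shows that $\bC{\bf X}_n/(J_n + (X^{(i,j)}_{1,\dots,i}-1)) \twoheadrightarrow \bC[R_n\cap U] = \bC[c_{a,b}]$, and conversely the inclusion $J_n\subset I(R_n)$ gives the reverse surjection, so these rings coincide. A polynomial $F\in I(R_n)$ therefore lies in $J_n$ after dehomogenization, which is precisely the assertion that $\bigl(\prod_{i\le j}X^{(i,j)}_{1,\dots,i}\bigr)^{N}F\in J_n$ for $N\gg 0$; rehomogenizing carefully gives the per-factor exponents $N_{i,j}$.

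The main obstacle I anticipate is the inductive dehomogenization argument: one must verify that the particular relations $R^{(i_1,j_1),(i_2,j_2);k}_{L,P}$ listed, when restricted to $U$, actually suffice to cut $X^{(i,j)}_L$ down to degree-one coordinates, and that no extra relations beyond $J_n$ are needed to obtain the \emph{full} polynomial ring $\bC[c_{a,b}]$ as the quotient (equivalently, that $J_n$ restricted to $U$ is radical, even though $J_n$ itself need not be). The bookkeeping with the signs $(-1)^{\sum_{p=1}^d(l_p-p)}$ and with the two cases $j_1\ge j_2$ versus $j_1\le j_2$ in the definition of the relations is delicate but routine once the combinatorial pattern of the Laplace expansion is matched against \eqref{L'}--\eqref{J'}. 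A secondary point requiring care is that $R_n\cap U$ is dense in $R_n$ — this follows because $R_n$ is irreducible (being a Bott tower, or by the orbit-closure description) and $U$ meets it (it contains $[v_{\bf 1}]$) — so that vanishing on $R_n\cap U$ is the same as vanishing on $R_n$, making the saturation statement equivalent to the ring isomorphism on $U$.
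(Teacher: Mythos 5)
The statement you are asked to prove is the \emph{conjecture} that $I(R_n)=J_n$ on the nose, i.e.\ that $J_n$ is prime. Your argument, even granting every step of the dehomogenization induction, only establishes that $I(R_n)$ is the \emph{saturation} of $J_n$ with respect to $\prod_{i\le j}X^{(i,j)}_{1,\dots,i}$ --- which is the theorem stated just before the conjecture, and is in fact how the paper proceeds (via Proposition \ref{G} and the homomorphism $\Psi$). The entire method lives on the distinguished open set $U=\{\prod X^{(i,j)}_{1,\dots,i}\ne 0\}$: you compare the localizations $(\bC{\bf X}_n/J_n)[(\prod X^{(i,j)}_{1,\dots,i})^{-1}]$ and $\bC[R_n\cap U]$. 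A localization argument cannot distinguish $J_n$ from its saturation; any $F$ with $(\prod X^{(i,j)}_{1,\dots,i})^N F\in J_n$ but $F\notin J_n$ is invisible on $U$. So the conclusion you reach at the end of your second paragraph ("$F$ lies in $J_n$ after dehomogenization") is exactly the saturation statement and nothing more; the final step from saturation to equality is missing and is precisely the open content of the conjecture.

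To close that gap one must show $J_n$ is already saturated (equivalently, that $\prod X^{(i,j)}_{1,\dots,i}$ is a nonzerodivisor on $\bC{\bf X}_n/J_n$, or that $J_n$ is prime/radical as a homogeneous ideal). The paper indicates the expected route in Section 4: exhibit a set of monomials that simultaneously spans $\bC{\bf X}_n/J_n$ and is linearly independent in $Q_n=\bC{\bf X}_n/I(R_n)$ (a straightening law / standard monomial theory for the generalized Pl\"ucker relations). Your proposal contains no substitute for this combinatorial input; indeed you flag in passing that "$J_n$ itself need not be [radical]" but treat it as a secondary point, when it is in fact the whole difficulty. As a proof of the preceding saturation theorem your outline is essentially sound and parallels the paper's (the paper uses the algebraic reduction of Proposition \ref{G} plus injectivity of $\Psi$ on $Q_n$, where you argue geometrically on the open cell), but as a proof of the conjecture it does not get off the ground.
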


\section{Proofs.}
\subsection{The ideal}
\begin{prop}
$J_n\subset I(R_n)$, i.e. all the relations $R^{(i_1,j_1),(i_2,j_2),k}_{L,P}$ vanish on $R_n$.
\end{prop}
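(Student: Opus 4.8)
The plan is to verify the vanishing of each Plücker-type relation $R^{(i_1,j_1),(i_2,j_2);k}_{L,P}$ directly on the dense orbit $(N^-)^a\cdot[v_{\bf 1}]\subset R_n$; since $R_n$ is the closure of this orbit and the polynomials are continuous, vanishing on the orbit forces vanishing on all of $R_n$. Thus it suffices to fix an element $g=\exp(\sum_{a\le b}c_{a,b}f_{a,b})\in(N^-)^a$ and show that the substitution of the Plücker coordinates of $g\cdot[v_{\bf 1}]$ — which are given explicitly by the determinant formula of Lemma~\ref{det} — makes $R^{(i_1,j_1),(i_2,j_2);k}_{L,P}$ equal to zero. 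In geometric terms, on the orbit each factor $V_{i,j}$ is the span of the columns of the "staircase" matrix appearing in the proof of Lemma~\ref{det}, i.e. $V_{i,j}=\mathrm{span}(w_a+\sum_{b\ge i}c_{a,b}w_{b+1} : 1\le a\le i)$, and the Plücker coordinate $X^{(i,j)}_L$ is the corresponding maximal minor.

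The key step is to recognize the relation as a classical straightening/Plücker identity for minors of a fixed matrix, transported through the truncations $W_{i,j}$. Concretely, write $A$ for the $n\times(n-1)$ matrix whose $a$-th column is $w_a+\sum_b c_{a,b}w_{b+1}$, so that $X^{(i_1,j_1)}_L$ and $X^{(i_2,j_2)}_P$ are both minors of $A$ (read off the appropriate rows and the first $i_1$, resp. $i_2$, columns), after discarding rows indexed by $\{i_s+1,\dots,j_s\}$ — this discarding is exactly what the conditions $L\subset\{1,\dots,i_1,j_1+1,\dots,n\}$, $P\subset\{1,\dots,i_2,j_2+1,\dots,n\}$, together with the extra restriction when $j_1\le j_2$, are engineered to make consistent. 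The sum $\sum_{r_1<\dots<r_k}X^{(i_1,j_1)}_{L'}X^{(i_2,j_2)}_{P'}$ with the index-swap prescription \eqref{L'}–\eqref{J'} is then precisely the right-hand side of the standard quadratic Plücker relation expressing $X_L X_P$ as a sum over ways of exchanging $k$ indices between $L$ and $P$; one checks the sign conventions match those fixed by $X^{(i,j)}_{l_{\sigma(1)},\dots,l_{\sigma(i)}}=(-1)^\sigma X^{(i,j)}_{l_1,\dots,l_i}$. Since $V_{i_1,j_1}$ and $V_{i_2,j_2}$ are both spanned by (truncations of) the first $\max(i_1,i_2)=i_1$ columns of $A$, they are "nested" in the sense required for the Plücker relation to hold identically, and the restriction on which $r_i$ contribute (namely $l_{r_i}\in\{1,\dots,i_2,j_1+1,\dots,n\}$) just drops the terms that would involve a row index lying in the gap $\{i_2+1,\dots,j_1\}$ of $W_{i_2,j_2}$, which are zero anyway by the shape of $A$.

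I expect the main obstacle to be bookkeeping rather than conceptual: carefully matching the three flavors of the relation (the case $i_1\ne i_2$, $j_1\ge j_2$; the case $j_1\le j_2$ with its extra hypothesis on $L$; and the diagonal case $(i_1,j_1)=(i_2,j_2)$ reducing to Example~\ref{Gr}) against the classical incidence Plücker relations, and in particular pinning down all signs $(-1)^{\sum_{p=1}^d(l_p-p)}$ from Lemma~\ref{det} so that the telescoping on the orbit is exact. A secondary point to handle cleanly is that the coordinates $X^{(i,j)}_L$ with $L\not\subset\{1,\dots,i,j+1,\dots,n\}$ never appear — one must confirm the swap \eqref{L'}–\eqref{J'} keeps $L'$ and $P'$ within the admissible index sets for $(i_1,j_1)$ and $(i_2,j_2)$ respectively, which again follows from the imposed constraint $l_{r_i}\in\{1,\dots,i_2,j_1+1,\dots,n\}$ and, when $j_1\le j_2$, from $L\subset\{1,\dots,i_1,j_2+1,\dots,n\}$. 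Once these admissibility and sign checks are in place, the vanishing on the orbit — hence on $R_n$ by density — is immediate from the determinantal identity.
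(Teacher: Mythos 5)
Your proposal is correct and is essentially the paper's own argument: the paper likewise evaluates the relations on the dense $(N^-)^a$-orbit via the determinant formula of Lemma~\ref{det} and concludes by the Sylvester identity, which is exactly the column-exchange (quadratic Pl\"ucker/straightening) determinantal identity you invoke. The only difference is presentational --- the paper names the identity and leaves the sign and admissibility bookkeeping implicit, whereas you spell out where those checks live.
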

\begin{proof}
We recall that the Sylvester identity (see e.g. \cite{Fu}, section 8.1, Lemma 2) states that for any
two $s$ by $s$ matrices $M$ and $N$ and a number $k\le s$ one has 
\[
\det M \det N - \sum_{1\le r_1<\dots <r_k\le s} \det M' \det N'=0, 
\] 
where the matrices $M'$ and $N'$ are obtained from $M$ and $N$ by interchanging the first $k$ columns
of $N$ with the columns $r_1,\dots,r_k$ of $M$ (preserving the order of columns). Combining the Sylvester
identity with Lemma \ref{det} we obtain our proposition. 
\end{proof}

\begin{lem}\label{vert}
The relations $R^{(i,j),(i,j);k}_{L,P}$, $R^{(i+1,j),(i+1,j);k}_{L,P}$ and
$R^{(i,j),(i+1,j);k}_{L,P}$ with all possible $L$ and $P$ define the variety of pairs 
$(V_{i,j},V_{i+1,j})\subset Gr_i(W_{i,j})\times Gr_{i+1}(W_{i+1,j})$ satisfying $V_{i,j}\subset V_{i+1,j}$. 
\end{lem}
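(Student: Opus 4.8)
The plan is to reduce the statement to a Plücker-type identity on $W$ together with the classical description of the incidence variety inside a product of two Grassmannians. First I would isolate the two ingredients. On the one hand, the relations $R^{(i,j),(i,j);k}_{L,P}$ alone cut out $Gr_i(W_{i,j})$ by Example~\ref{Gr}, and likewise $R^{(i+1,j),(i+1,j);k}_{L,P}$ cut out $Gr_{i+1}(W_{i+1,j})$; so the zero locus of all the listed relations is automatically contained in $Gr_i(W_{i,j})\times Gr_{i+1}(W_{i+1,j})$. It remains to show that, on this product, the additional mixed relations $R^{(i,j),(i+1,j);k}_{L,P}$ (for all $L$, $P$) vanish exactly on the incidence locus $\{V_{i,j}\subset V_{i+1,j}\}$. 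Note $W_{i,j}\subset W_{i+1,j}$ since $W_{i,j}=\mathrm{span}(w_1,\dots,w_i,w_{j+1},\dots,w_n)$ and $W_{i+1,j}=\mathrm{span}(w_1,\dots,w_{i+1},w_{j+1},\dots,w_n)$, so the containment $V_{i,j}\subset V_{i+1,j}$ of subspaces of $W$ makes sense.

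Second, I would invoke the classical fact (the same Sylvester/straightening argument that gives ordinary Plücker relations) that for two subspaces $U\in Gr_i(W')$, $U'\in Gr_{i+1}(W')$ of a common space $W'$, the condition $U\subset U'$ is equivalent to the vanishing of all the bilinear expressions
\[
X_L(U)\,X_P(U') - \sum_{r} X_{L'}(U)\,X_{P'}(U'),
\]
where on the right one exchanges a single index ($k=1$) of $P$ with each index of $L$ in turn with the appropriate sign; the general $k$ relations are consequences. In one direction, if $V_{i,j}\subset V_{i+1,j}$ these expressions vanish by the incidence Plücker relations applied in $W_{i+1,j}$ (here one uses that a Plücker coordinate $X^{(i,j)}_L$ of a subspace of $W_{i,j}$, for $L\subset\{1,\dots,i,j+1,\dots,n\}$, is literally the corresponding Plücker coordinate of the same subspace viewed inside $W_{i+1,j}$, so the relations $R^{(i,j),(i+1,j);k}_{L,P}$ are precisely the classical incidence relations in $W_{i+1,j}$). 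Conversely, if the incidence relations all vanish but $V_{i,j}\not\subset V_{i+1,j}$, one exhibits an explicit pair of index sets $L,P$ on which the bilinear expression is nonzero, using nondegeneracy of at least one Plücker coordinate of each subspace; this is the standard argument showing the incidence relations are not just necessary but sufficient.

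The one subtlety I would treat carefully is the indexing mismatch between $W_{i,j}$ and $W_{i+1,j}$: the variable $X^{(i,j)}_L$ is only defined for $L\subset\{1,\dots,i,j+1,\dots,n\}$, whereas $V_{i+1,j}$ has Plücker coordinates indexed by $(i{+}1)$-subsets of $\{1,\dots,i{+}1,j{+}1,\dots,n\}$, and in the mixed relation one of the $P$-indices may equal $i+1$ while the exchanged $L$-indices must stay in the smaller range. One checks that the constraint built into the definition of $R^{(i,j),(i+1,j);k}_{L,P}$ (the summation running over $r$ with $l_r\in\{1,\dots,i_2,j_1+1,\dots,n\}=\{1,\dots,i,j+1,\dots,n\}$, and with $j_1=j_2=j$ no further restriction) is exactly the condition that keeps all coordinates appearing in the relation within their allowed ranges, so the relation coincides with the genuine incidence Plücker relation in $W_{i+1,j}$ restricted to those $L$ that avoid the index $i+1$. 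I expect this bookkeeping — verifying that the listed relations, with their prescribed index ranges, are neither too few nor too many to cut out the incidence locus — to be the main obstacle; once it is settled, the statement follows from Example~\ref{Gr} and the classical theory of Plücker and incidence relations.
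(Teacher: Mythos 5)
Your proposal is correct and takes essentially the same route as the paper: the diagonal relations $R^{(i,j),(i,j);k}$ and $R^{(i+1,j),(i+1,j);k}$ cut out the two Grassmannians (Example~\ref{Gr}), and the mixed relations $R^{(i,j),(i+1,j);k}$ are identified with the classical incidence Pl\"ucker relations characterizing $V_{i,j}\subset V_{i+1,j}$, viewed inside $W_{i+1,j}\supset W_{i,j}$. The paper's proof is a terse two-sentence version of exactly this argument; your additional bookkeeping on index ranges elaborates rather than deviates from it.
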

\begin{proof}
First, the relations $R^{(i,j),(i,j);k}_{L,P}$ and $R^{(i+1,j),(i+1,j);k}_{L,P}$ cut out the product of 
Grassmann varieties $Gr_i(W_{i,j})\times Gr_{i+1}(W_{i+1,j})$ inside the product of projective spaces
$\bP(\Lambda^i W_{i,j})\times \bP(\Lambda^{i+1} W_{i+1,j})$. Second, the relations $R^{(i,j),(i+1,j);k}_{L,P}$
are exactly the classical Pl\"ucker relations saying that the first subspace has to be a subset of the second.
\end{proof}

\begin{lem}\label{hor}
The relations $R^{(i,j),(i,j);k}_{L,P}$, $R^{(i,j+1),(i,j+1);k}_{L,P}$ and
$R^{(i,j),(i,j+1);k}_{L,P}$ with arbitrary $L$ and $P$ define the variety of pairs 
$(V_{i,j},V_{i,j+1})\subset Gr_i(W_{i,j})\times Gr_{i}(W_{i,j+1})$ satisfying $pr_{j+1}V_{i,j}\subset V_{i,j+1}$. 
\end{lem}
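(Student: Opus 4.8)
The plan is to mirror, in the "horizontal" direction, the argument that was just used in Lemma~\ref{vert} for the "vertical" direction. The key geometric observation is that the condition $pr_{j+1}V_{i,j}\subset V_{i,j+1}$, although not quite a containment of subspaces inside one common space, can be reformulated as one. Indeed, $pr_{j+1}$ maps $W_{i,j}=\mathrm{span}(w_1,\dots,w_i,w_{j+1},\dots,w_n)$ onto $\mathrm{span}(w_1,\dots,w_i,w_{j+2},\dots,w_n)\subset W_{i,j+1}$, with one-dimensional kernel $\bC w_{j+1}$. Hence for $V_{i,j}\in Gr_i(W_{i,j})$ the image $pr_{j+1}V_{i,j}$ is a subspace of $W_{i,j+1}$ of dimension $i$ or $i-1$, and the condition $pr_{j+1}V_{i,j}\subset V_{i,j+1}$ with $\dim V_{i,j+1}=i$ is again a Pl\"ucker-type incidence condition between two subspaces of the single space $W_{i,j+1}$ (after applying $pr_{j+1}$ to the first one).

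First I would record, using Lemma~\ref{det} (or directly the formula $\exp(\sum c_{a,b}f_{a,b})w_1\wedge\dots\wedge w_i$ displayed in its proof), how the Pl\"ucker coordinate $X^{(i,j)}_L$ of $V_{i,j}$ relates to that of $pr_{j+1}V_{i,j}$ inside $W_{i,j+1}$: for $L$ not containing $j+1$ the coordinate is unchanged, and the coordinates of $V_{i,j}$ indexed by sets $L$ containing $j+1$ do not correspond to coordinates of $pr_{j+1}V_{i,j}$ at all (they get killed). This is precisely the bookkeeping encoded in the definition of $R^{(i,j),(i,j+1);k}_{L,P}$, where by construction the index set $L$ is restricted to $\{1,\dots,i,j+1+1,\dots,n\}=\{1,\dots,i,j+2,\dots,n\}$ — i.e.\ $j+1\notin L$. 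Next I would invoke Lemma~\ref{vert}'s internal logic: the relations $R^{(i,j),(i,j);k}_{L,P}$ and $R^{(i,j+1),(i,j+1);k}_{L,P}$ cut out $Gr_i(W_{i,j})\times Gr_i(W_{i,j+1})$ inside $\bP(\Lambda^iW_{i,j})\times\bP(\Lambda^iW_{i,j+1})$, exactly as in the first sentence of that proof. So the content of the present lemma is concentrated in the mixed relations $R^{(i,j),(i,j+1);k}_{L,P}$.

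The core step is then to identify the mixed relations $R^{(i,j),(i,j+1);k}_{L,P}$ with the classical Pl\"ucker incidence relations expressing $pr_{j+1}V_{i,j}\subset V_{i,j+1}$. Here the summation formula for $R^{(i_1,j_1),(i_2,j_2);k}_{L,P}$ with $i_1=i_2=i$, $j_1=j$, $j_2=j+1$ specializes: the index $r_i$ run over positions with $l_{r_i}\in\{1,\dots,i,j+1,\dots,n\}$, the substitution rules \eqref{L'}, \eqref{J'} are the usual column-exchange rules, and (by the Remark on which terms survive) the quadratic monomial $X^{(i,j)}_L X^{(i,j+1)}_P$ appears exactly when $\{p_1,\dots,p_k\}\subset\{1,\dots,i,j+1,\dots,n\}$. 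Comparing this with the well-known statement (see e.g. \cite{Fu}) that a point $(U,V)\in Gr_i\times Gr_i$ of a fixed ambient space satisfies $U\subset V$ iff a family of straightening/incidence relations vanishes, and using the coordinate dictionary from the previous paragraph to translate coordinates of $V_{i,j}$ into coordinates of $pr_{j+1}V_{i,j}\subset W_{i,j+1}$, one sees the two systems agree. Conversely, any pair cut out by these relations has $V_{i,j}\in Gr_i(W_{i,j})$, $V_{i,j+1}\in Gr_i(W_{i,j+1})$ and $pr_{j+1}V_{i,j}\subset V_{i,j+1}$, which is the asserted variety.

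The step I expect to be the main obstacle is the coordinate bookkeeping of the projection $pr_{j+1}$: one must check carefully that the restriction "$L\subset\{1,\dots,i,j+2,\dots,n\}$" built into $R^{(i,j),(i,j+1);k}_{L,P}$ is exactly what is needed — that dropping the coordinates of $V_{i,j}$ with $j+1\in L$ loses no incidence information beyond what is already implied — and that the signs coming from the permutation convention $X^{(i,j)}_{l_{\sigma(1)},\dots,l_{\sigma(i)}}=(-1)^\sigma X^{(i,j)}_{l_1,\dots,l_i}$ match the signs in the classical incidence relations. The Grassmannian part (first sentence) and the "these relations hold on the incidence variety" direction are routine given Lemma~\ref{det} and Example~\ref{Gr}; it is the precise matching of the mixed relations with the $pr_{j+1}$-incidence relations, signs and index-set restrictions included, that requires care.
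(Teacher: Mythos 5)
Your overall architecture is the same as the paper's (whose proof is itself only a one-line sketch: "as in Lemma~\ref{vert}, except that the mixed relations are the \emph{degenerate} Pl\"ucker relations of \cite{Fe1}"): the two Grassmann families cut out $Gr_i(W_{i,j})\times Gr_i(W_{i,j+1})$, and everything rests on the mixed relations. The gap is in your core step, exactly at the point you yourself flag as "the main obstacle": you propose to identify $R^{(i,j),(i,j+1);k}_{L,P}$ with the \emph{classical} incidence relations between $pr_{j+1}V_{i,j}$ and $V_{i,j+1}$ inside $W_{i,j+1}$, via the dictionary "keep the coordinates $X^{(i,j)}_L$ with $j+1\notin L$, kill the rest". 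That dictionary is the map $\Lambda^i(pr_{j+1})$ on Pl\"ucker vectors, and it is identically zero precisely on the locus $w_{j+1}\in V_{i,j}$, i.e.\ where the projection drops dimension. There every classical incidence relation between the (zero) projected Pl\"ucker vector and the Pl\"ucker vector of $V_{i,j+1}$ holds vacuously, while the condition $pr_{j+1}V_{i,j}\subset V_{i,j+1}$ is \emph{not} vacuous once $i\ge 2$. So your argument proves the containment of the incidence variety in the zero set, and the reverse containment only over the open set $w_{j+1}\notin V_{i,j}$.

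To see that this is not a removable bookkeeping issue, take $n=4$, $(i,j)=(2,2)$, $V_{2,2}=\mathrm{span}(w_3,\,w_1+w_4)$, $V_{2,3}=\mathrm{span}(w_1,w_2)$. By the definition of the relations, both $L$ and $P$ in $R^{(2,2),(2,3);k}_{L,P}$ lie in $\{1,2,4\}$, and the exchange rules keep every index set $L'$ inside $\{1,2,4\}$; but the only nonvanishing Pl\"ucker coordinates of $V_{2,2}$ are $X^{(2,2)}_{13}$ and $X^{(2,2)}_{34}$, so every monomial of every mixed relation vanishes at this pair, even though $pr_3V_{2,2}=\bC(w_1+w_4)\not\subset V_{2,3}$. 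The equation actually needed here, $X^{(2,2)}_{34}X^{(2,3)}_{12}+X^{(2,2)}_{23}X^{(2,3)}_{14}-X^{(2,2)}_{13}X^{(2,3)}_{24}=0$, involves coordinates with $3\in L$ and is therefore invisible to your translation. The paper's cited mechanism (the degenerate Pl\"ucker relations of \cite{Fe1}) is not the classical incidence system for the projected subspace, and any complete proof must treat the locus $w_{j+1}\in V_{i,j}$ by a separate argument; indeed the example above shows that simply invoking the classical theory after projecting, as you propose, cannot close this case.
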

\begin{proof}
Similar to the proof of the lemma above. The only difference that the relations 
$R^{(i,j),(i,j+1);k}_{I,P}$ are degenerate Pl\"ucker relations (see \cite{Fe1}).
\end{proof}

\begin{prop}
We have $R_n=X(J_n)$, i.e. the common zeroes of $J_n$ give $R_n$.
\end{prop}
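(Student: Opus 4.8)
The plan is to prove the two inclusions $R_n\subseteq X(J_n)$ and $X(J_n)\subseteq R_n$ separately. The first inclusion is already available: the previous proposition shows $J_n\subset I(R_n)$, so every point of $R_n$ is a common zero of $J_n$. Hence the whole content is the reverse inclusion $X(J_n)\subseteq R_n$, i.e.\ any collection $(V_{i,j})_{i\le j}$ of subspaces whose Pl\"ucker coordinates kill all the relations $R^{(i_1,j_1),(i_2,j_2);k}_{L,P}$ actually satisfies the defining conditions of $R_n$ from Section 1: $\dim V_{i,j}=i$, $V_{i,j}\subset W_{i,j}$, $V_{i,j}\subset V_{i+1,j}$, and $pr_{j+1}V_{i,j}\subset V_{i,j+1}$.

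The strategy is to extract these conditions from suitable \emph{subfamilies} of the Pl\"ucker relations. First, Example~\ref{Gr} (the relations $R^{(i,j),(i,j);k}_{L,P}$ with a single pair) tells us that, for each fixed $(i,j)$, the point lies in the Grassmannian $Gr_i(W_{i,j})\subset\bP(\Lambda^i W_{i,j})$; this yields $\dim V_{i,j}=i$ and $V_{i,j}\subset W_{i,j}$. Next, Lemma~\ref{vert} says that the relations involving the pairs $(i,j)$ and $(i+1,j)$ cut out exactly the incidence $V_{i,j}\subset V_{i+1,j}$, and Lemma~\ref{hor} says that the relations involving $(i,j)$ and $(i,j+1)$ cut out the degenerate incidence $pr_{j+1}V_{i,j}\subset V_{i,j+1}$. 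Since $J_n$ contains \emph{all} the relations $R^{(i_1,j_1),(i_2,j_2);k}_{L,P}$, in particular it contains all of the ones appearing in Example~\ref{Gr}, Lemma~\ref{vert} and Lemma~\ref{hor} for every admissible index; a point of $X(J_n)$ therefore satisfies every one of the three bulleted conditions defining $R_n$, so $X(J_n)\subseteq R_n$. Combined with the reverse inclusion this gives $R_n=X(J_n)$.

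The point that requires care — and the main obstacle — is that Lemma~\ref{vert} and Lemma~\ref{hor} are stated for a pair of adjacent Grassmannian factors in isolation, whereas $R_n$ is defined inside the full product $\prod_{i\le j}\bP(\Lambda^i W_{i,j})$. One must make sure that imposing, for \emph{each} adjacent pair, the relations of these lemmas simultaneously is equivalent to imposing the whole system of conditions: i.e.\ that there is no extra compatibility constraint linking non-adjacent factors that is not already a consequence of the chain of adjacent incidences. This is really a statement about the combinatorics of the poset $\{(i,j):1\le i\le j\le n-1\}$ with its two covering relations $(i,j)\to(i+1,j)$ and $(i,j)\to(i,j+1)$: the defining conditions of $R_n$ only involve covering pairs, so once every covering pair is handled by Lemma~\ref{vert} or Lemma~\ref{hor}, nothing is left over. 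I would spell this out by simply matching the three bullets in the definition of $R_n$ against the three families of relations one-to-one, and noting that $J_n$ by construction contains all of them, so no further argument is needed. (In particular the ``cross'' relations with $i_1\ne i_2$ \emph{and} $j_1\ne j_2$, which do not correspond to a covering pair, are automatically satisfied on $R_n$ by the previous proposition and impose no new constraint on $X(J_n)$ beyond what the covering relations already force — this is exactly the content that makes the set-theoretic statement $R_n=X(J_n)$ true while leaving the scheme-theoretic statement $I(R_n)=J_n$ only a conjecture.)
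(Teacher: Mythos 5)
Your proposal is correct and takes essentially the same route as the paper: the inclusion $R_n\subseteq X(J_n)$ is the preceding proposition, and the reverse inclusion follows because the defining conditions of $R_n$ only involve the covering pairs $(i,j)\to(i+1,j)$ and $(i,j)\to(i,j+1)$, each of which is cut out exactly by the relations treated in Lemmas~\ref{vert} and~\ref{hor} (the single-pair Grassmannian relations being included there as well). Your closing observation that the extra non-adjacent relations can only shrink $X(J_n)$, and hence are harmless for this inclusion, is exactly the right way to dispose of the apparent subtlety.
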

\begin{proof}
We need to prove that if all the relations $R$ vanish at a point $x\in \times_{i\le j} \bP(M_{\al_{i,j}})$,
then $x\in R_n$. Follows from Lemmas \ref{vert} and \ref{hor}.    
\end{proof}

The following proposition  is of the key importance for us.
\begin{prop}\label{G}
For an element $F\in \bC{\bf X}_n$ there exist numbers $N_{i,j}\in\bZ_{\ge 0}$, $i\le j$ 
and another polynomial $G\in \bC{\bf X}_n$ 
such that 
\[
\prod_{i\le j}(X^{(i,j)}_{1,\dots,i})^{N_{i,j}} F - G\in J_n
\] 
and $G$ depends only on the PBW-degree zero or one variables
\[
X^{(i,j)}_{1,\dots,i},\ i\le j\ \text{ and }\ X^{(i,j)}_{1,\dots,r-1,r+1,\dots,i,m},\ 1\le r\le i\le j<m,
\]  
In addition, $G$ can be chosen in such a way that if $G$ depends on a variable 
$X^{(i,j)}_{1,\dots,r-1,r+1,\dots,i,m}$ for some $i,j$ then it does not depend on all variables of the form
$X^{(i_1,j_1)}_{1,\dots,r-1,r+1,\dots,i_1,m}$ for $(i_1,j_1)\ne (i,j)$.
\end{prop}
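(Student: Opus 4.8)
The plan is to use the Pl\"ucker relations to rewrite an arbitrary monomial, modulo $J_n$ and after multiplying by a suitable power of the degree-zero variables $X^{(i,j)}_{1,\dots,i}$, as a polynomial in the degree-zero and degree-one variables only. The key observation is that the relations $R^{(i_1,j_1),(i_2,j_2);k}_{L,P}$ with the \emph{smaller} index pair having a canonical ``bottom'' support allow one to straighten a variable of high PBW-degree against a variable of PBW-degree zero: in the relation $X^{(i_1,j_1)}_L X^{(i_2,j_2)}_{1,\dots,i_2}$, the leading term pairs a complicated coordinate with the standard one, and all the summands on the right-hand side have at least one of the swapped indices moved into the first factor, which strictly decreases the PBW-degree of that factor (the moved indices $l_{r_1},\dots,l_{r_k}$ are forced into $\{1,\dots,i_2\}$, i.e. they contribute $0$ to the PBW-degree of the new first factor). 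So the strategy is an induction on the total PBW-degree of $F$, or more precisely on a suitable lexicographic statistic (e.g. the multiset of PBW-degrees of the variables appearing, read in decreasing order), together with a nested induction handling the degree-one variables.

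First I would set up the induction: write $F$ as a $\bC$-linear combination of multi-homogeneous monomials and reduce to a single monomial $m = \prod X^{(i_t,j_t)}_{L_t}$. If every factor already has PBW-degree $\le 1$, there is nothing to do. Otherwise pick a factor $X^{(i_1,j_1)}_{L}$ of maximal PBW-degree $\ge 2$; it has at least two indices $l_a, l_b > j_1$. Multiply $m$ by $X^{(i_1,j_1)}_{1,\dots,i_1}$ and apply the relation $R^{(i_1,j_1),(i_1,j_1);k}_{L,(1,\dots,i_1)}$ for an appropriate $k$ (chosen so that enough of the ``high'' indices of $L$ get pulled down into the first factor), rewriting the product $X^{(i_1,j_1)}_{L}X^{(i_1,j_1)}_{1,\dots,i_1}$ modulo $J_n$ as a sum of products of two coordinates of $W_{i_1,j_1}$, each of strictly smaller total PBW-degree, or with one factor of PBW-degree zero and the other of strictly smaller PBW-degree. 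Iterating, each factor of PBW-degree $\ge 2$ is reduced, at the cost of introducing more degree-zero factors $X^{(i,j)}_{1,\dots,i}$ (which is exactly why the statement only asks for reduction modulo $J_n$ after multiplication by a monomial in these variables). The termination of this inner loop follows because the statistic ``sum of PBW-degrees of all factors'' strictly decreases whenever a factor of degree $\ge 2$ is touched, and degree-one and degree-zero factors are never increased.

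Once $G'$ depends only on degree $\le 1$ variables, the second assertion requires an extra straightening step among the degree-one variables themselves: if $G'$ contains $X^{(i_1,j_1)}_{1,\dots,r-1,r+1,\dots,i_1,m}$ and also $X^{(i_2,j_2)}_{1,\dots,r-1,r+1,\dots,i_2,m}$ with $(i_1,j_1)\ne (i_2,j_2)$ (same ``hole'' position $r$, same new index $m$), I would use the mixed relation $R^{(i_1,j_1),(i_2,j_2);1}$ (or the horizontal/vertical relations of Lemmas \ref{vert}, \ref{hor}, which relate the coordinates of $W_{i,j}$ along a row $i$ fixed or a column $j$ fixed) to express the product of these two degree-one coordinates in terms of a product where one of them is replaced by a degree-zero coordinate. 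Concretely, the relation lets one trade $X^{(i_1,j_1)}_{\dots,m}X^{(i_2,j_2)}_{1,\dots,i_2}$-type products, moving the index $m$ to the factor with the larger $i$ and leaving the other factor in standard form; by choosing consistently (say always move $m$ to the factor with largest $i$, breaking ties by smallest $j$) one can ensure that for each pair $(r,m)$ at most one $X^{(i,j)}_{1,\dots,\hat r,\dots,i,m}$ survives. This is a finite confluent rewriting, again terminating by a monomial-order argument.

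The main obstacle I anticipate is bookkeeping the supports: one must check that at every stage the coordinates produced by the relations actually have admissible index sets (i.e. $L'\subset\{1,\dots,i,j+1,\dots,n\}$ for the relevant $(i,j)$), that the PBW-degree really drops as claimed — this uses the precise combinatorics in \eqref{L'}, \eqref{J'} and the constraint $l_{r_i}\in\{1,\dots,i_2,j_1+1,\dots,n\}$ — and that the final ``uniqueness of the degree-one variable per $(r,m)$'' normalization is compatible with, and does not undo, the earlier reduction to degree $\le 1$. Verifying that the two rewriting systems (degree-lowering, then degree-one-normalizing) can be run in sequence without the second reintroducing degree-$\ge 2$ variables is the delicate point; it holds because the degree-one relations only ever move indices \emph{among} degree-$\le 1$ coordinates, never creating new ``high'' indices, but this must be confirmed case by case from the explicit form of $R^{(i_1,j_1),(i_2,j_2);1}_{L,P}$.
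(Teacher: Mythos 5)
Your proposal follows essentially the same route as the paper: straighten each variable $X^{(i,j)}_L$ of PBW-degree $d\ge 2$ against $X^{(i,j)}_{1,\dots,i}$ via the relation $R^{(i,j),(i,j);1}_{L,(r,1,\dots,r-1,r+1,\dots,i)}$, which trades the pair for a degree-$(d-1)$ factor times a degree-$1$ factor, and then normalize the surviving degree-one variables using the mixed relations $R^{(i_1,j_1),(i_2,j_2);1}$ exactly as in the second half of the paper's argument (the choice of which of the two variables $X^{(i_1,j_1)}_{1,\dots,r-1,r+1,\dots,i_1,m}$, $X^{(i_2,j_2)}_{1,\dots,r-1,r+1,\dots,i_2,m}$ to keep is immaterial). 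One small correction: the sum of the PBW-degrees of the factors is \emph{preserved}, not decreased, by each straightening step (the multiset $\{d,0\}$ becomes $\{d-1,1\}$), so termination must rest on your alternative statistic --- the decreasingly ordered multiset of degrees, i.e.\ the maximal degree together with its multiplicity --- which does strictly drop.
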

\begin{proof}
Recall the PBW-degree of a variable $X^{(i,j)}_L$ given by the number of $l_a\in L$ such that $l_a>j$.
Let  $X^{(i,j)}_L$ be a variable of PBW-degree greater than one with $r\notin L$ for some $r\le i$. 
We consider the relation 
$R^{(i,j),(i,j);1}_{L,(r,1,\dots,r-1,r+1,\dots,i)}$. One has
\begin{multline*}
(-1)^{r-1}R^{(i,j),(i,j);1}_{L,(1,\dots,i)} = \\
X^{(i,j)}_L X^{(i,j)}_{(1,\dots,i)}-
\sum_{a:\ l_a>j} X^{(i,j)}_{(l_1,\dots,l_{a-1},r,l_{a+1},\dots,l_i)} X^{(i,j)}_{(1,\dots,r-1,l_a,r+1,\dots,i)}.
\end{multline*}
We note that the PBW-degree of each variable $X^{(i,j)}_{(l_1,\dots,l_{a-1},r,l_{a+1},\dots,l_i)}$
is one less than that of $X^{(i,j)}_L$. Hence, by decreasing induction, we obtain the first statement of
our proposition.

Now assume that a polynomial $G$ depends on the variables 
\[X^{(i_1,j_1)}_{1,\dots,r-1,r+1,\dots,i_1,m} \text{ and } X^{(i_2,j_2)}_{1,\dots,r-1,r+1,\dots,i_2,m}\]
for two different pairs $(i_1,j_1)$ and $(i_2,j_2)$. Let $i_1\ge i_2$. We consider the relation
\begin{multline*}
(-1)^{r-1}R^{(i_1,j_1),(i_2,j_2);1}_{(1,\dots,r-1,r+1,\dots,i_1,m),(r,1,\dots,r-1,r+1,\dots,i_2)}\\=
X^{(i_1,j_1)}_{1,\dots,r-1,r+1,\dots,i_1,m} X^{(i_2,j_2)}_{1,\dots,i_2} - 
(-1)^{i_2+i_1} X^{(i_1,j_1)}_{1,\dots,i_1} X^{(i_2,j_2)}_{1,\dots,r-1,r+1,\dots,i_2,m}.
\end{multline*}
Using these relations we can get rid of the variable 
$X^{(i_1,j_1)}_{1,\dots,r-1,r+1,\dots,i_1,m}$ (multiplying by $X^{(i_2,j_2)}_{1,\dots,i_2}$).
\end{proof}

\subsection{Representation of $Q_n$.} Let $Q_n=\bC{\bf X_n}/I(R_n)$ be the coordinate ring of $R_n$. 
Consider a polynomial algebra $A_n$ in variables $T_{i,j}$, 
$1\le i\le j\le n-1$ and $Z_{i,j}$, $1 \le i\le j\le n-1$. We define a homomorphism $\Psi:\bC{\bf X}_n\to A$
in the following way. Let $X^{(i,j)}_L$ be a variable with 
$$1\le l_1<\dots <l_d\le i\le j <l_{d+1}<\dots <l_i$$
(thus the PBW-degree of this variable is $i-d$).  We define $i\times i$ matrix $M$ by 
\begin{equation}\label{det1}
M_{a,b}= \begin{cases}
(-1)^{l_a-a}, \text{ if } b=a\le d, l_a\in L\\
Z_{a,l_b-1}, \text{ if } a\notin L, a\le i, b>d,\\
0, \text{ otherwise}.
\end{cases} 
\end{equation}
We define the homomorphism of polynomial algebras by the formula 
\begin{equation}\label{map}
X^{(i,j)}_{l_1,\dots,l_i}\mapsto T_{i,j} \det M.
\end{equation} 

\begin{prop}\label{vanish}
The kernel of $\Psi$ coincides with $I(R_n)$. 
\end{prop}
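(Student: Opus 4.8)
The plan is to establish the two inclusions $\ker\Psi\subseteq I(R_n)$ and $I(R_n)\subseteq\ker\Psi$ separately, using the explicit orbit description from Lemma \ref{det}.

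\textbf{The inclusion $\ker\Psi\subseteq I(R_n)$.} First I would observe that the map $\Psi$ is essentially the algebraic incarnation of Lemma \ref{det}: substituting $Z_{a,b}=c_{a,b}$ and $T_{i,j}=1$ recovers the Pl\"ucker coordinates of the point $\exp(\sum_{a\le b}c_{a,b}f_{a,b})[v_{\bf 1}]$. More precisely, the torus factors $T_{i,j}$ account for the freedom of rescaling each homogeneous component of the multi-projective embedding, so that $\Psi$ factors through the coordinate ring of the cone over the closure of the orbit $(N^-)^a\cdot[v_{\bf 1}]=R_n$ (using the Proposition identifying $R_n$ with this closure). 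Concretely: a multi-homogeneous $F\in\ker\Psi$ means $F$ vanishes identically on the image of the parametrization $(c_{a,b})\mapsto(X^{(i,j)}_L)$, hence $F$ vanishes on a dense subset of $R_n$ and therefore on all of $R_n$ by continuity. This gives $F\in I(R_n)$. The only point to check carefully here is that the image of $\Psi$, viewed as a subvariety of the product of affine cones, really has $R_n$ as its projectivization — this is where one invokes the Proposition above stating $R_n=\overline{(N^-)^a\cdot[v_{\bf 1}]}$ together with the computation in Lemma \ref{det} that the orbit map is exactly given by the determinants $\det M$ with $Z\to c$.

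\textbf{The inclusion $I(R_n)\subseteq\ker\Psi$.} Conversely, suppose $F\in I(R_n)$. I would first note that $J_n\subseteq\ker\Psi$: indeed by the previous Proposition $J_n\subseteq I(R_n)$ and every generator $R^{(i_1,j_1),(i_2,j_2);k}_{L,P}$ maps to zero under $\Psi$ because, after specializing $Z\to c$, it becomes an instance of the Sylvester identity exactly as in the proof that $J_n$ vanishes on $R_n$ (the $T_{i,j}$'s come out as an overall multihomogeneous factor and do not affect vanishing). Now apply Proposition \ref{G}: there are exponents $N_{i,j}$ and a polynomial $G$ depending only on PBW-degree zero and one variables, with $\prod(X^{(i,j)}_{1,\dots,i})^{N_{i,j}}F-G\in J_n\subseteq\ker\Psi$. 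Since $F\in I(R_n)$ and $J_n\subseteq I(R_n)$, we get $G\in I(R_n)$ as well. Under $\Psi$ the variable $X^{(i,j)}_{1,\dots,i}$ maps to $T_{i,j}$ (a non-zero-divisor in $A_n$), and $X^{(i,j)}_{1,\dots,r-1,r+1,\dots,i,m}$ maps to $\pm T_{i,j}Z_{r,m-1}$ by formula \eqref{deg1}. So it remains to show $\Psi(G)=0$. Because of the last clause of Proposition \ref{G}, each degree-one variable $Z_{r,m-1}$ occurring in $G$ comes from a \emph{unique} pair $(i,j)$, so $\Psi(G)$, after dividing by the monomial $\prod T_{i,j}^{(\cdot)}$, is a genuine polynomial in the independent variables $Z_{r,s}$ — there is no cancellation forced by the map identifying different $X^{(i_1,j_1)}_{\dots}$ and $X^{(i_2,j_2)}_{\dots}$. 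Hence $\Psi(G)=0$ forces $G$, as a polynomial in these degree-$\le 1$ variables, to vanish on the image of the orbit parametrization; but on degree-$\le 1$ variables the parametrization is a dominant map onto its (affine) image with the $Z_{r,s}$ genuinely free coordinates up to the Pl\"ucker-type constraints already in $J_n$, so $G\in J_n$, and thus $\Psi(G)=0$. Therefore $\prod(X^{(i,j)}_{1,\dots,i})^{N_{i,j}}\Psi(F)=\Psi(G)=0$ in $A_n$; since each $\Psi(X^{(i,j)}_{1,\dots,i})=T_{i,j}$ is a nonzerodivisor and $A_n$ is a domain, $\Psi(F)=0$, i.e.\ $F\in\ker\Psi$.

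\textbf{Expected main obstacle.} The routine half is $\ker\Psi\subseteq I(R_n)$, which is just "vanishing on a dense orbit implies vanishing on the closure." The delicate half is $I(R_n)\subseteq\ker\Psi$, and within it the genuinely load-bearing step is the reduction via Proposition \ref{G} together with the verification that $\Psi(G)=0$ knowing only $G\in I(R_n)$ — this is precisely where the disjointness clause in Proposition \ref{G} (each degree-one variable tied to a single pair $(i,j)$) is essential, since otherwise the identifications $X^{(i_1,j_1)}_{\dots}\mapsto T_{i_1,j_1}Z_{r,m-1}$ and $X^{(i_2,j_2)}_{\dots}\mapsto T_{i_2,j_2}Z_{r,m-1}$ would allow spurious relations and one could not conclude $G\in J_n$. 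I would therefore spend most of the write-up making that argument airtight, and treat the Sylvester-identity check $J_n\subseteq\ker\Psi$ and the density argument as short remarks.
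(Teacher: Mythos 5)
Your first inclusion, $\ker\Psi\subseteq I(R_n)$, is exactly the paper's argument: $I(R_n)=I(U)$ for the dense orbit $U$, and $\Psi$ is evaluation of a multi-homogeneous polynomial on the representatives of the orbit points supplied by Lemma \ref{det}, with the $T_{i,j}$ keeping track of the multi-degree. The point you seem to have missed is that this single observation proves \emph{both} inclusions at once: for $F$ multi-homogeneous of degree $\bm$ one has $\Psi(F)=\prod T_{i,j}^{m_{i,j}}\cdot F\bigl(\dots,\det M(Z),\dots\bigr)$, so $\Psi(F)=0$ if and only if $F$ vanishes at every point of $U$, i.e.\ if and only if $F\in I(U)=I(R_n)$. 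That is the entire proof in the paper; there is nothing asymmetric between the two directions.

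Your second inclusion, by contrast, is both unnecessary and circular as written. You reduce to showing $\Psi(G)=0$ for the PBW-degree-$\le 1$ polynomial $G$ produced by Proposition \ref{G}, and then argue ``$\Psi(G)=0$ forces $G$ \dots\ to vanish \dots\ so $G\in J_n$, and thus $\Psi(G)=0$'' --- you are assuming the conclusion to derive it. The correct way to close that loop would simply be: $G\in I(R_n)$, hence $G$ vanishes on $U$, hence $\Psi(G)=0$ by the same evaluation argument as in your first half --- but then the entire detour through Proposition \ref{G}, the disjointness clause, and $J_n\subseteq\ker\Psi$ is superfluous for $F$ itself. You have in effect imported the machinery of the \emph{next} theorem (that $I(R_n)$ is the saturation of $J_n$), which uses Proposition \ref{vanish} as an input together with the injectivity of $\Psi$ on monomials in the degree-$\le 1$ variables; that material does not belong in the proof of the present proposition. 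Strip the second half down to the one-line evaluation argument and the proof is complete and matches the paper.
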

\begin{proof}
Recall the open dense cell $U\subset R_n$, which is the $(N^-)^a$-orbit through the highest weight line. 
We note that $I(R_n)=I(U)$. Now  the embedding
\[
R_n\subset \prod_{i\le j} \bP(\Lambda^i W_{i,j})
\]
is defined using the same determinants as in \eqref{det1} (see Lemma \eqref{det}). 
Hence a polynomial in $\bC{\bf X}_n$
vanishes on $U$ if and only if belongs to the kernel of $\Psi$
(we note that the variables $T_{i,j}$ guarantee the multi-homogeneity). 
\end{proof}

\begin{cor}\label{emb}
We have an embedding $Q_n\to A_n$ defined by \eqref{map}.
\end{cor}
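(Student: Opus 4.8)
The plan is to obtain the embedding as a formal consequence of Proposition \ref{vanish} together with the first isomorphism theorem for rings. First I would observe that $\Psi$ is by construction a well-defined homomorphism of multigraded polynomial algebras $\bC{\bf X}_n\to A_n$: each generator $X^{(i,j)}_L$ is sent to $T_{i,j}$ times a polynomial in the $Z_{a,b}$, and the antisymmetry convention $X^{(i,j)}_{l_{\sigma(1)},\dots,l_{\sigma(i)}}=(-1)^\sigma X^{(i,j)}_{l_1,\dots,l_i}$ is respected, since permuting the rows of the matrix $M$ in \eqref{det1} multiplies $\det M$ by the same sign. Next, Proposition \ref{vanish} gives $\ker\Psi=I(R_n)$; in particular $I(R_n)\subseteq\ker\Psi$, so $\Psi$ factors through the quotient $\bC{\bf X}_n/I(R_n)=Q_n$, yielding an algebra homomorphism $\overline\Psi:Q_n\to A_n$ determined by \eqref{map}. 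Finally, the opposite inclusion $\ker\Psi\subseteq I(R_n)$ is precisely the injectivity of $\overline\Psi$, so $\overline\Psi$ is the desired embedding.

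I do not expect any genuine obstacle: the content of the statement has already been absorbed into Proposition \ref{vanish} (and, behind it, Lemma \ref{det}, which shows that the embedding $R_n\subset\prod_{i\le j}\bP(\Lambda^iW_{i,j})$ is given by exactly the determinants appearing in \eqref{det1}). The one point worth remarking on is that $\overline\Psi$ preserves the $\bZ^{n(n-1)/2}$-grading --- the $T_{i,j}$-degree of $\overline\Psi(F)$ records the total degree of $F$ in the $(i,j)$-group of variables --- so that $Q_n\hookrightarrow A_n$ is in fact an embedding of graded rings, which is the form in which one wants it if the presentation is to be used to extract geometric information about $R_n$.
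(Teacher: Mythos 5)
Your argument is correct and is exactly the intended one: the paper states this as an immediate corollary of Proposition \ref{vanish} (with no separate proof), and the content is precisely the factorization of $\Psi$ through $Q_n=\bC{\bf X}_n/\ker\Psi$ via the first isomorphism theorem. Your added observation that the induced map respects the $\bZ^{n(n-1)/2}$-grading via the $T_{i,j}$-degrees is a correct and harmless supplement.
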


\begin{cor}
One has the decomposition
\[
Q_n=\bigoplus_{\bm\in \bZ_{\ge 0}^{n(n-1)/2}} Q_\bm,
\]
where $Q_\bm$ consists of polynomials of total degree $m_{i,j}$ in variables $X^{(i,j)}_L$.
\end{cor}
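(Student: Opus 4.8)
The plan is to observe that the multi-grading on $\bC{\bf X}_n$ descends to the quotient because the ideal $I(R_n)$ is multi-homogeneous, and then to make the pieces $Q_\bm$ explicit via the embedding of Corollary \ref{emb}. Recall that $R_n$ is a closed subvariety of the product of projective spaces $\prod_{i\le j}\bP(\Lambda^i W_{i,j})$, and that $\bC{\bf X}_n=\bigoplus_\bm \bC{\bf X}_\bm$ is its multi-graded coordinate ring, where $\bC{\bf X}_\bm$ consists of the polynomials of total degree $m_{i,j}$ in the variables $X^{(i,j)}_L$ for each $i\le j$. The defining ideal $I(R_n)$ is, by construction, spanned by multi-homogeneous polynomials, hence it is a multi-graded ideal, $I(R_n)=\bigoplus_\bm\bigl(I(R_n)\cap \bC{\bf X}_\bm\bigr)$. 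Therefore the quotient $Q_n=\bC{\bf X}_n/I(R_n)$ inherits the multi-grading: one sets $Q_\bm$ to be the image of $\bC{\bf X}_\bm$ in $Q_n$, equivalently $\bC{\bf X}_\bm/\bigl(I(R_n)\cap \bC{\bf X}_\bm\bigr)$, and then $Q_n=\bigoplus_\bm Q_\bm$. Since $\bC{\bf X}_\bm=0$ unless all $m_{i,j}\ge 0$, only $\bm\in\bZ_{\ge 0}^{n(n-1)/2}$ contribute.

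The same decomposition is also visible directly through the embedding $\Psi\colon \bC{\bf X}_n\to A_n$ of Proposition \ref{vanish} and Corollary \ref{emb}, which is the version I would present in order to make the identification with honest polynomials transparent. By the defining formula \eqref{map}, a monomial of multi-degree $\bm$ in the variables $X^{(i,j)}_L$ is sent to $\bigl(\prod_{i\le j}T_{i,j}^{m_{i,j}}\bigr)P$, where $P$ is a polynomial in the variables $Z_{i,j}$. Hence $\Psi(\bC{\bf X}_\bm)$ lies in the homogeneous component of $A_n$ of $T$-degree $\bm$ with respect to the grading that assigns degree $1$ in $T_{i,j}$ and degree $0$ in $Z_{i,j}$. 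These components are linearly independent in $A_n$, and $\Psi$ is injective on $Q_n=\bC{\bf X}_n/\ker\Psi$ by Corollary \ref{emb}; consequently the subspaces $Q_\bm=\Psi(\bC{\bf X}_\bm)$ are linearly independent and span $\Psi(Q_n)\simeq Q_n$, which gives the asserted direct sum decomposition.

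There is no genuine obstacle in this corollary: the only point requiring a word of justification is the compatibility of the multi-grading with the ideal, and this is automatic because $R_n$ is defined inside a product of projective spaces — equivalently, because the $T$-multi-homogeneity is already built into the map $\Psi$ via \eqref{map}. Everything else is bookkeeping with the multi-degree.
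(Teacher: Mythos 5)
Your proposal is correct and follows the same (implicit) route as the paper: the corollary is immediate from the fact that $I(R_n)$, being the ideal of a closed subvariety of the product of projective spaces $\prod_{i\le j}\bP(\Lambda^i W_{i,j})$, is multi-homogeneous, so the multi-grading of $\bC{\bf X}_n$ descends to $Q_n$. Your second argument via the $T$-grading of $A_n$ and Corollary \ref{emb} is a valid alternative verification of the same decomposition, but it is not needed.
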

We note that this decomposition is very similar to the one for the classical flag varieties, where 
the coordinate ring decomposes into the direct sum of dual irreducible $\g$-modules.
Thus it is very natural to ask about the properties of the spaces $Q_\bm$.

\subsection{Cocyclicity}
Our goal in this subsection is to prove that there exists a natural action of the Lie algebra $(\fn^-)^a$ on
$Q_n$ such that each $Q_\bm$ is cocycilc with respect to the action of $(\fn^-)^a$ with a cocyclic vector
$$v_\bm^*=\prod_{i\le j} (X^{(i,j)}_{1,\dots,i})^{m_{i,j}}.$$

Let $\bm$ be a collection such that for some $i,j$ we have $m_{i,j}=1$ and $m_{c,d}=0$ for all 
$(c,d)\ne (i,j)$ (these are analogues of the fundamental weights). We denote the corresponding homogeneneous
component $Q_\bm$ by $Q_{i,j}$.

We first define the structure of $(\fn^-)^a$-module on $Q_{i,j}$. 
Note that  the space $Q_{i,j}$ has a basis $X^{(i,j)}_L$ with $L=(1\le l_1<\dots <l_i\le n)$ and
$L\subset \{1,\dots,i,j+1,\dots,n\}$. We identify $Q_{i,j}$ with the dual space $\Lambda^i(W_{i,j})$
by setting $X^{(i,j)}_L w_P=\delta_{L,P}$ for any $P=(1\le p_1<\dots <p_i\le n)$, where  
$w_P=w_{p_1}\wedge \dots \wedge w_{p_i}$. This endows $Q_{i,j}$ with the structure of $\g^a$-module, 
$Q_{i,j}^*\simeq \Lambda^i(W_{i,j})$. In particular, one has
\[
f_{a,b} X^{(i,j)}_L=
\begin{cases}
-X^{(i,j)}_{l_1,\dots,l_{r-1},a,l_{r+1},\dots,l_i}, \text{ if } a\le i\le j<b \text{ and } l_r=b+1,\\
0, \text{ otherwise}. 
\end{cases}
\]
(Recall that for $\sigma\in S_i$ we have $X_{\sigma L}=(-1)^\sigma X_L$). We note that $Q_{i,j}$ is
coclycic with respect to the action of $(\fn^-)^a$ and $X^{(i,j)}_{1,\dots,i}$ is a cocyclic vector.

Now, let us define the structure of $(\fn^-)^a$-module on all $Q_\bm$. By definition, there exists a surjective
map $\bigotimes_{i\le j} Q_{i,j}^{\T m_{i,j}}\to Q_\bm$. We have a natural structure of $(\fn^-)^a$-module
on the tensor product and hence we only need to show that $I(R_n)$ is $(\fn^-)^a$-invariant.
We first prove this for $J_n$.

\begin{lem}
The ideal $J_n$ is invariant with respect to the $(\fn^-)^a$-action defined above.
\end{lem}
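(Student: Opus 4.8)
The plan is to check directly that the action of each generator $f_{a,b}$ of $(\fn^-)^a$ on the tensor product $\bigotimes_{i\le j}Q_{i,j}^{\T m_{i,j}}$ sends each Pl\"ucker generator $R^{(i_1,j_1),(i_2,j_2);k}_{L,P}$ back into the ideal $J_n$. Since $(\fn^-)^a$ is abelian and acts on the tensor product by the Leibniz rule, and since each $R$ lives in a tensor factor of the form $Q_{i_1,j_1}\T Q_{i_2,j_2}$ (a biquadratic piece of $\bC\x_n$), it is enough to understand $f_{a,b}$ applied to a product $X^{(i_1,j_1)}_A X^{(i_2,j_2)}_B$ of two Pl\"ucker coordinates. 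By the explicit formula recalled just before the lemma, $f_{a,b}$ acts on a single $X^{(i,j)}_L$ by replacing, when possible, the (unique) entry equal to $b+1$ by $a$, with a sign, and kills $X^{(i,j)}_L$ otherwise; in particular $f_{a,b}$ acts on $Q_{i,j}$ only when $a\le i\le j<b$.

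First I would set up the bookkeeping: fix the data $(i_1,j_1),(i_2,j_2),k,L,P$ of a generator $R:=R^{(i_1,j_1),(i_2,j_2);k}_{L,P}$ and fix a root vector $f_{a,b}$. If $f_{a,b}$ annihilates both $Q_{i_1,j_1}$ and $Q_{i_2,j_2}$ there is nothing to prove, so assume $a\le i_1$ (recall $i_1\ge i_2$) and $j_\ast<b$ for at least one of $\ast\in\{1,2\}$. The key observation is that $f_{a,b}$, viewed as replacing ``$b+1\rightsquigarrow a$'', commutes with the combinatorial recipe \eqref{L'}--\eqref{J'} that produces the terms of $R$: the summation in $R$ runs over choices of positions $r_1<\dots<r_k$ with $l_{r_i}\in\{1,\dots,i_2,j_1+1,\dots,n\}$, and swapping entries between $L$ and $P$ at those positions is compatible with substituting one fixed large index $b+1$ by the small index $a\le i_2$. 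Concretely, I would argue that $f_{a,b}R$ is, up to sign, again of the shape $R^{(i_1,j_1),(i_2,j_2);k'}_{L'',P''}$ (or a sum of such), where $L'',P''$ are obtained from $L,P$ by the substitution $b+1\mapsto a$ wherever it occurs, and $k'\le k$ counts how many of the swapped positions carried the index $b+1$. This is the step I expect to require the most care: one must track which of the two coordinates contains the entry $b+1$, whether that entry sits among the ``swappable'' positions $r_1,\dots,r_k$ or not, and check that the signs produced by the $f_{a,b}$-action match the signs in the definition of the new relation. The side condition $L\subset\{1,\dots,i_1,j_2+1,\dots,n\}$ in the case $j_1\le j_2$, and the restriction on the summation range $l_{r_i}\in\{1,\dots,i_2,j_1+1,\dots,n\}$, are preserved under $b+1\mapsto a$ because $a\le i_2\le i_1$, so the image is a legitimate generator.

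There is one degenerate phenomenon to handle separately: after the substitution $b+1\mapsto a$ an index may be repeated inside a single coordinate (if $a$ already appears in $L$ or $P$), in which case that term of $f_{a,b}R$ vanishes by antisymmetry of $X$; and it may happen that the result has the leading monomial $X^{(i_1,j_1)}_{L''}X^{(i_2,j_2)}_{P''}$ absent (cf. the Remark on when the initial term is present), in which case $f_{a,b}R$ is simply $\pm$ the sum part, which is still a $\bC$-linear combination of the relations $R$ with smaller $k$ together with products of variables with lower PBW-degree — again in $J_n$. Collecting all cases, every $f_{a,b}R$ lies in the span of the $R$'s, hence in $J_n$; since $J_n$ is generated by the $R$'s and $(\fn^-)^a$ acts by derivations on $\bC\x_n$, this proves $f_{a,b}J_n\subset J_n$ for all $a\le b$, i.e. $J_n$ is $(\fn^-)^a$-invariant.
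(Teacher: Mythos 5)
Your proposal is correct and is essentially the paper's own argument, which simply asserts that $f_{a,b}R^{(i_1,j_1),(i_2,j_2);k}_{L,P}$ is again a generalized Pl\"ucker relation from the list; your substitution picture ($b+1\rightsquigarrow a$ performed in every factor, compatibly with the swaps producing $L'$ and $P'$) is exactly the content of that assertion. The one point to watch is that the condition for $f_{a,b}$ to act nontrivially on $Q_{i,j}$ must be read as $a\le i\le j\le b$ rather than the stated $a\le i\le j<b$ (the dual of the module action on $\Lambda^i W_{i,j}$ is nonzero for $j=b$ as well): taken literally with the strict inequality, $f_{1,2}$ applied to $R^{(2,2),(1,1);1}_{(2,3),(1)}=X^{(2,2)}_{2,3}X^{(1,1)}_1+X^{(2,2)}_{1,2}X^{(1,1)}_3$ would give $-X^{(2,2)}_{1,2}X^{(1,1)}_1\notin J_3$, whereas with $j\le b$ it gives $0$ as required.
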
 
\begin{proof}
It suffices to prove that for any element $f_{a,b}\in (\fn^-)^a$ one has 
$$f_{a,b} R^{(i_1,j_1),(i_2,j_2);k}_{L,P}\in J_n.$$ But it is easy to see that 
$f_{a,b} R^{(i_1,j_1),(i_2,j_2);k}_{L,P}$ is again (different) generalized Pl\"ucker relation from our list.
\end{proof}

\begin{thm}
The ideal $I(R_n)$ is $(\fn^-)^a$-invariant and it is a saturation of $J_n$.
\end{thm}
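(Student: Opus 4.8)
The plan is to establish the theorem in two parts, since both assertions are really consequences of the machinery already developed. The first assertion --- that $I(R_n)$ is $(\fn^-)^a$-invariant --- will follow once we know that $I(R_n)$ is the saturation of $J_n$: indeed, $J_n$ is $(\fn^-)^a$-invariant by the previous Lemma, and saturation by the fixed polynomials $X^{(i,j)}_{1,\dots,i}$ preserves invariance because each generator $X^{(i,j)}_{1,\dots,i}$ is itself a cocyclic vector, in particular $f_{a,b}X^{(i,j)}_{1,\dots,i}$ lies in $J_n$ (it is a degree-one Pl\"ucker variable, annihilated or sent to a variable in the span of $Q_{i,j}$). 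More precisely, if $\prod_{i\le j}(X^{(i,j)}_{1,\dots,i})^{N_{i,j}}F\in J_n$, then applying $f_{a,b}$ and using the Leibniz rule together with $f_{a,b}J_n\subset J_n$ and $f_{a,b}X^{(i,j)}_{1,\dots,i}\in J_n$, one gets that $\prod_{i\le j}(X^{(i,j)}_{1,\dots,i})^{N_{i,j}}(f_{a,b}F)$ lies in $J_n$, hence $f_{a,b}F\in I(R_n)$ by the saturation description.

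So the substance is the claim $I(R_n)=\mathrm{sat}(J_n)$, i.e. that for every $F\in I(R_n)$ there are exponents $N_{i,j}$ with $\prod(X^{(i,j)}_{1,\dots,i})^{N_{i,j}}F\in J_n$. First I would use Proposition \ref{G}: after multiplying $F$ by a suitable monomial $\prod(X^{(i,j)}_{1,\dots,i})^{N_{i,j}}$ and subtracting an element of $J_n$, we may replace $F$ by a polynomial $G$ depending only on the PBW-degree zero and one variables, with the additional normalization that for each pair $(r,m)$ at most one variable $X^{(i,j)}_{1,\dots,r-1,r+1,\dots,i,m}$ actually occurs in $G$. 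Since $J_n\subset I(R_n)$ (the earlier Proposition) and the $X^{(i,j)}_{1,\dots,i}$ do not vanish identically on $R_n$ (they are nonzero on the dense cell $U$), this $G$ still lies in $I(R_n)=I(U)$. It therefore suffices to show that such a normalized $G$, vanishing on $U$, is already zero. For this I would invoke Corollary \ref{emb} / Proposition \ref{vanish}: evaluating $G$ under $\Psi$, the degree-zero variable $X^{(i,j)}_{1,\dots,i}$ maps to $T_{i,j}$ and the degree-one variable $X^{(i,j)}_{1,\dots,r-1,r+1,\dots,i,m}$ maps to $\pm T_{i,j}Z_{r,m-1}$ (by the Example following Lemma \ref{det}). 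Under the normalization, distinct variables occurring in $G$ go to monomials in the $T_{i,j}$ and $Z_{r,s}$ that are pairwise distinct --- the $T_{i,j}$-part records the multidegree and the $Z_{r,m-1}$-part records which degree-one variables were used, with the normalization guaranteeing no two different $X$-variables collapse to the same $Z$-monomial with the same $T$-content --- so $\Psi(G)=0$ forces $G=0$ as a polynomial.

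The key steps, in order: (i) reduce modulo $J_n$ and a monomial multiple, via Proposition \ref{G}, to a normalized polynomial $G$ in degree $\le 1$ variables; (ii) observe $G\in I(U)$ and compute $\Psi(G)$ explicitly using Lemma \ref{det} and its Example; (iii) check that the normalization makes $\Psi$ injective on the span of the relevant monomials, concluding $G=0$ and hence $F\in\mathrm{sat}(J_n)$; (iv) deduce $(\fn^-)^a$-invariance of $I(R_n)$ from invariance of $J_n$ and the fact that each $X^{(i,j)}_{1,\dots,i}$ is a cocyclic vector.

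The main obstacle I expect is step (iii): making precise that the normalization in Proposition \ref{G} really does separate variables after applying $\Psi$. One has to track carefully that two distinct degree-one variables $X^{(i_1,j_1)}_{1,\dots,r-1,r+1,\dots,i_1,m}$ and $X^{(i_2,j_2)}_{1,\dots,r-1,r+1,\dots,i_2,m}$ with the same $(r,m)$ but different $(i,j)$ --- which would indeed give $\Psi$-images $\pm T_{i_1,j_1}Z_{r,m-1}$ and $\pm T_{i_2,j_2}Z_{r,m-1}$ sharing the $Z$-factor --- are exactly the ones excluded by the normalization, so that within a fixed multidegree component the surviving monomials in $T,Z$ are all distinct. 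Once this bookkeeping is in place, injectivity of $\Psi$ on the normalized span is immediate and the theorem follows.
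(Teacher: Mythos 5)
Your proposal is correct and follows essentially the same route as the paper: reduction via Proposition \ref{G} to a normalized $G$ in PBW-degree $\le 1$ variables, then injectivity of $\Psi$ on such polynomials via Proposition \ref{vanish} (the paper phrases this as iteratively extracting coefficients of powers of each degree-one variable, using that $Z_{r,m-1}$ arises only from the single surviving variable $X^{(i,j)}_{1,\dots,r-1,r+1,\dots,i,m}$), and finally invariance from the Leibniz rule and invariance of $J_n$. Two cosmetic remarks: the precise fact is $f_{a,b}X^{(i,j)}_{1,\dots,i}=0$ (not merely membership in $J_n$, though your argument is robust to that), and your last step, deducing $f_{a,b}F\in I(R_n)$ from $\prod_{i\le j}(X^{(i,j)}_{1,\dots,i})^{N_{i,j}}(f_{a,b}F)\in J_n\subset I(R_n)$, tacitly uses that the $X^{(i,j)}_{1,\dots,i}$ are nonzerodivisors modulo $I(R_n)$ (i.e.\ that $I(R_n)$ is prime and these coordinates do not vanish on the dense cell), which is exactly how the paper concludes as well.
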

\begin{proof}
We first prove the second statement. 
Using Proposition \ref{G} we find numbers $N_{i,j}$ and a polynomial $G$ such that
\begin{equation}\label{FG}
\prod_{i\le j} (X^{(i,j)}_{1,\dots,i})^{N_{i,j}} F - G\in J_n
\end{equation}
and $G$ depends on the variables of PBW-degree at most $1$. In addition, only one variable of the form
$X^{(i,j)}_{1,\dots,r-1,r+1,\dots,i,m}$ shows up in $G$ for any pair $r<m$. We claim that if $F\in I(R_n)$
then $G=0$. In fact, we choose  pairs $r,m$ and $i,j$ with $r\le i\le j<m$ such that $G$ depends on 
$X^{(i,j)}_{1,\dots,r-1,r+1,\dots,a,m}$. Let 
\[
G=\sum_{p\ge 0} \left(X^{(i,j)}_{1,\dots,r-1,r+1,\dots,i,m}\right)^p G_p
\] 
be the decomposition with respect to powers of $X^{(i,j)}_{1,\dots,r-1,r+1,\dots,i,m}$
(i.e. $G_p$ are independent of $X^{(i,j)}_{1,\dots,r-1,r+1,\dots,i,m}$). Proposition \ref{vanish}
gives that
$\Psi G=0$. We note that $\Psi X^{(i,j)}_{1,\dots,r-1,r+1,\dots,i,m}=(-1)^{i-r} T_{i,j}Z_{r,m-1}$. Therefore,
the variable $Z_{r,m-1}$ comes only from $X^{(i,j)}_{1,\dots,r-1,r+1,\dots,i,m}$. We conclude that
all $G_p=0$. Continuing this procedure, we arrive at $G=0$. Therefore, \eqref{FG} now reads as
\[
\prod_{i\le j} (X^{(i,j)}_{1,\dots,i})^{N_{i,j}} F \in J_n
\]
and thus $I(R_n)$ is the saturation of $J_n$ (see \cite{H}, Exercise 5.10). 

In order to prove the first statement of the proposition 
we note that for all $a,b,i,j$ one has $f_{a,b} X^{(i,j)}_{1,\dots,i}=0$. Since $J_n$ is $\g^a$-invariant,
we obtain
\[
\prod_{i\le j} (X^{(i,j)}_{1,\dots,i})^{N_{i,j}} (f_{a,b} F) \in J_n\subset I(R_n).
\]
Now since $I(R_n)$ is simple, we obtain $f_{a,b} F\in I(R_n)$.
\end{proof}

Thanks to the theorem above, each $Q_\bm$ carries the structure of  $(\fn^-)^a$-module. Our next goal is to prove
that it is cocyclic with cocyclic vector being the product of $X^{(i,j)}_{1,\dots,i}$, i.e for any nontrivial
$F\in Q_\bm$ one has
\[
\prod_{i\le j} (X^{(i,j)}_{1,\dots,i})^{m_{i,j}}\in \bC[f_{a,b}]_{a\le b} F.
\]

\begin{thm}
$Q_\bm$ is cocyclic with a coclycic vector $\prod_{i\le j} (X^{(i,j)}_{1,\dots,i})^{m_{i,j}}$.
\end{thm}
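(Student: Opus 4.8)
The plan is to prove the equivalent statement that $v_\bm^{*}\in\bC[f_{a,b}]_{a\le b}\cdot F$ for every nonzero $F\in Q_\bm$. Since $Q_\bm$ is finite-dimensional and $\bC v_\bm^{*}$ is $(\fn^-)^a$-stable (indeed $f_{a,b}X^{(i,j)}_{1,\dots,i}=0$ for all $a\le b$, so $f_{a,b}v_\bm^{*}=0$), this forces $v_\bm^{*}$ to lie in every nonzero submodule of $Q_\bm$, which is exactly cocyclicity with cocyclic vector $v_\bm^{*}$. I would carry this out through the embedding $\Psi\colon Q_n\hookrightarrow A_n$ of Corollary \ref{emb}, read on the open cell $U=(N^-)^a\cdot[v_{\bf 1}]\subset R_n$.

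First I would record two structural facts. Each generator $X^{(i,j)}_L$ is sent by $\Psi$ to $T_{i,j}$ times a polynomial in the variables $Z_{a,b}$, and $\Psi(X^{(i,j)}_{1,\dots,i})=T_{i,j}$; hence for $F\in Q_\bm$ one has $\Psi(F)=T^{\bm}\,p(Z)$ with $T^{\bm}:=\prod_{i\le j}T_{i,j}^{m_{i,j}}$, in particular $\Psi(v_\bm^{*})=T^{\bm}$, and $p\neq0$ because $\Psi$ is injective. The second fact, which is the crux, is the $(\fn^-)^a$-equivariance of $\Psi$: for each $a\le b$ the operator $f_{a,b}$ acts on $\Psi(Q_n)$ as the restriction of a derivation of $A_n$ that kills every $T_{i,j}$ and acts on the $Z$'s, up to a global sign, as $\partial_{Z_{a,b}}$. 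The clean way to see this is geometric: the $(N^-)^a$-action identifies $U$ with the abelian group $(N^-)^a$ by translation, under which, by Lemma \ref{det} and formula \eqref{deg1}, the $Z_{a,b}$ restrict to the additive coordinates $c_{a,b}$ on $(N^-)^a$; translation invariance then forces $f_{a,b}$ to act on $\cO(U)=\bC[c_{a,b}]$ as $\pm\partial_{c_{a,b}}=\pm\partial_{Z_{a,b}}$, and since $f_{a,b}$ kills the homogenizing variables $T_{i,j}$ and acts by derivations this identification extends to all of $A_n$. Alternatively one can verify it directly on each generator: the explicit description of $f_{a,b}X^{(i,j)}_L$ (delete the entry $b+1$, insert $a$) corresponds, via the determinant in \eqref{det1}, precisely to the cofactor $\partial_{Z_{a,b}}$ of that determinant, and the one genuinely routine point is to check that all the resulting cofactor signs assemble into a single sign $\pm1$ depending only on the pair $(a,b)$.

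With these in place the theorem follows at once. Given a nonzero $F\in Q_\bm$, write $\Psi(F)=T^{\bm}p(Z)$ with $p\neq0$, and let $\kappa=(\kappa_{a,b})$ be the exponent vector of a monomial of $p$ of maximal total degree. Then $\prod_{a\le b}\partial_{Z_{a,b}}^{\kappa_{a,b}}$ annihilates every other monomial of $p$ (those of smaller degree would map to negative degree; those of the same degree are not divisible by $Z^{\kappa}$) and sends the chosen one to a nonzero constant $c_0$; by the equivariance of $\Psi$ this means that $\big(\prod_{a\le b}f_{a,b}^{\kappa_{a,b}}\big)F$ is mapped by $\Psi$ to $\pm c_0\,T^{\bm}=\Psi(\pm c_0\,v_\bm^{*})$, hence equals $\pm c_0\,v_\bm^{*}$. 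Thus $v_\bm^{*}\in\bC[f_{a,b}]_{a\le b}\cdot F$, and since $F$ was arbitrary, $Q_\bm$ is cocyclic with cocyclic vector $v_\bm^{*}$. The main obstacle is the equivariance statement in the second paragraph — everything else is formal — and within that the only non-conceptual ingredient is the bookkeeping of signs.
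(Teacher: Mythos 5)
Your argument is correct, but it is genuinely different from the one in the paper. The paper proves cocyclicity by staying inside $\bC{\bf X}_n$: it invokes Proposition \ref{G} to replace $F$ (modulo $J_n$ and multiplication by the $X^{(i,j)}_{1,\dots,i}$) by a nonzero normal form $G$ in PBW-degree $\le 1$ variables, with at most one degree-one variable per pair $(r,m)$, and then applies $\prod f_{r,m}^{p(r,m)}$ for an extremal monomial of $G$ to land on $v_\bm^*$. You instead push everything through the embedding $\Psi\colon Q_n\hookrightarrow A_n$ of Corollary \ref{emb}, identify $f_{a,b}$ with $-\partial_{Z_{a,b}}$ on the image, and extract the top-degree coefficient of $\Psi(F)=T^{\bm}p(Z)$ by differentiation. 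Both proofs ultimately rest on the same mechanism (the $f_{a,b}$ act as partial derivatives in the cell coordinates $c_{a,b}$), but you make this global and explicit, whereas the paper only uses it implicitly for the degree-one variables after the Pl\"ucker reduction. What your route buys: it is independent of Proposition \ref{G} and of the combinatorics of the generalized Pl\"ucker relations, needing only Proposition \ref{vanish} and Lemma \ref{det}, and your final step (a nonzero polynomial has a top monomial killed by nothing but its own derivative) is airtight. What it costs: the equivariance $\Psi\circ f_{a,b}=-\partial_{Z_{a,b}}\circ\Psi$ on generators, which you correctly isolate as the crux, does have to be established with a sign that is uniform in $(i,j)$ and $L$; your geometric derivation (abelianness of $(N^-)^a$ gives $X_L(\exp(tf_{a,b})\exp(\textstyle\sum c f)v)=X_L(\exp(\sum cf+tf_{a,b})v)$, so the coaction is literally $-\partial_{c_{a,b}}$ with the paper's sign convention $f_{a,b}X_L=-X_{L'}$) is the clean way to get this, and since $f_{a,b}$ and $\partial_{Z_{a,b}}$ are both derivations it suffices to check generators, so the cofactor verification is indeed only bookkeeping. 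The paper's route, by contrast, reuses the normal-form machinery already built for the saturation theorem. Either proof is acceptable; yours is arguably more transparent.
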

\begin{proof}
Let us denote by $v_\bm^*$ the image of $\prod_{i\le j} (X^{(i,j)}_{1,\dots,i})^{m_{i,j}}$ in $Q_n$. 
Let $F\in \bC{\bf X}_n$ and $F\notin I(R_n)$. 
We use Proposition \ref{G} and write 
\[
\prod_{i\le j} (X^{(i,j)}_{1,\dots,i})^{N_{i,j}} F - G\in J_n
\]
for some numbers $N_{i,j}$ and a polynomial $G$. 
Since $F\notin I(R_n)$ we have $G\ne 0$. Suppose  that a term
\[
\prod_{i\le j} (X^{(i,j)}_{1,\dots,i})^{l_{i,j}}
\prod_{r<m} (X^{(i(r,m),j(r,m))}_{1,\dots,r-1,r+1,\dots,i(r,m),m})^{p(r,m)}
\]
with some indices $i(r,m)\le j(r,m)$ and powers $p(r,m)$  shows up in $G$ with a non-zero coefficient. Then 
since $f_{a,b} X^{(i,j)}_{1,\dots,i}=0$, we have 
\[
\prod_{r<m} f_{r,m}^{p(r,m)} G = \prod_{i\le j} (X^{(i,j)}_{1,\dots,i})^{l_{i,j}}
\prod_{r<m} f_{r,m}^{p(r,m)} (X^{(i(r,m),j(r,m))}_{1,\dots,r-1,r+1,\dots,i(r,m),m})^{p(r,m)}
\]
and the result is proportional (with a non-zero coefficient) to the product of powers of $X^{(i,j)}_{1,\dots,i}$.
Hence we arrive at the following identity: there exist numbers $N_{i,j}$, $p(r,m)$ such that
\[
\prod_{r<m} f_{r,m}^{p(r,m)} \left(\prod_{i\le j} (X^{(i,j)}_{1,\dots,i})^{N_{i,j}} F \right) =
\mathrm{const}\cdot \prod_{i\le j} (X^{(i,j)}_{1,\dots,i})^{N_{i,j}+m_{i,j}}.
\]
Since all $f_{a,b}$ annihilate $X^{(i,j)}_{1,\dots,i}$ we obtain
\[
\prod_{i\le j} (X^{(i,j)}_{1,\dots,i})^{N_{i,j}} \prod_{r<m} f_{r,m}^{p(r,m)} F - 
 \prod_{i\le j} (X^{(i,j)}_{1,\dots,i})^{N_{i,j}+m_{i,j}} \in I(R_n).
\]  
This gives 
\[
\left(\prod_{r<m} f_{r,m}^{p(r,m)}\right) F - 
\prod_{i\le j} (X^{(i,j)}_{1,\dots,i})^{m_{i,j}}\in I(R_n)
\]
and hence $(\prod_{r<m} f_{r,m}^{p(r,m)}) F=v_\bm^*$ in $Q_n$. 
\end{proof}

\begin{cor}
The modules $Q_\bm$ are cocyclic with respect to the abelian algebra $(\fn^-)^a$ with cocyclic
vectors $v_\bm^*$. One has the isomorphism of $(\fn^-)^a$-modules $Q_\bm^*\simeq M_\bm$.
\end{cor}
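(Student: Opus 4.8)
The plan is to deduce the corollary from the two preceding theorems by a straightforward unwinding of the established facts, so the work is organizational rather than computational. First I would record the cocyclicity statement: the previous theorem shows that each $Q_\bm$ is a module over the abelian Lie algebra $(\fn^-)^a = \bigoplus_{a\le b}\bC f_{a,b}$ and that it is generated as a $\bC[f_{a,b}]_{a\le b}$-comodule from the single vector $v_\bm^*$, in the sense that for every nonzero $F\in Q_\bm$ one has $v_\bm^*\in \bC[f_{a,b}]_{a\le b}\cdot F$. Dually this says precisely that $Q_\bm^*$ is a \emph{cyclic} $(\fn^-)^a$-module, cyclically generated by the functional $v_\bm\in Q_\bm^*$ that pairs nontrivially with $v_\bm^*$; concretely $Q_\bm^* = \bC[f_{a,b}]_{a\le b}\cdot v_\bm = S(\fn^-)\cdot v_\bm$.

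Next I would identify $Q_\bm^*$ with $M_\bm$ as $(\fn^-)^a$-modules, and in fact as $\g^a$-modules. The key is that the generator $v_\bm$ has the right highest-weight properties and that the construction is compatible with tensor products. For the fundamental pieces this is already in hand: the corollary just before the cocyclicity subsection identifies $Q_{i,j}^*$ with $\Lambda^i W_{i,j}$ as $\g^a$-modules, and by the second Lemma of Section 2 this is exactly $M_{\al_{i,j}}$, with $v_{i,j}$ corresponding to $v_{\al_{i,j}}$. For general $\bm$, the surjection $\bigotimes_{i\le j} Q_{i,j}^{\T m_{i,j}}\twoheadrightarrow Q_\bm$ used to define the $(\fn^-)^a$-action on $Q_\bm$ dualizes to an injection
\[
Q_\bm^* \hookrightarrow \bigotimes_{i\le j} (Q_{i,j}^*)^{\T m_{i,j}} \simeq \bigotimes_{i\le j} M_{\al_{i,j}}^{\T m_{i,j}},
\]
which is $\g^a$-equivariant because it is dual to a $\g^a$-equivariant map. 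Under this embedding the cyclic vector $v_\bm$ maps to $\bigotimes_{i\le j} v_{\al_{i,j}}^{\T m_{i,j}} = v_\bm$ (the highest weight vector of $M_\bm$), since on each tensor factor $v_{i,j}\mapsto v_{\al_{i,j}}$ by the fundamental case. Therefore the image of $Q_\bm^*$ is the $\g^a$-submodule generated by $v_\bm$ inside $\bigotimes M_{\al_{i,j}}^{\T m_{i,j}}$, which by definition is $M_\bm$. Combined with cyclicity this gives $Q_\bm^* = S(\fn^-)\cdot v_\bm = M_\bm$.

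The only point that requires a little care is verifying that the embedding $Q_\bm^*\hookrightarrow\bigotimes(Q_{i,j}^*)^{\T m_{i,j}}$ sends the cyclic vector $v_\bm$ exactly to the tensor product of highest weight vectors, rather than merely to something proportional or to a vector generating the same submodule; but this is immediate once one chooses the pairing $\langle v_\bm, v_\bm^*\rangle = 1$ and uses that $v_\bm^*=\prod_{i\le j}(X^{(i,j)}_{1,\dots,i})^{m_{i,j}}$ is the image of $\bigotimes (X^{(i,j)}_{1,\dots,i})^{\T m_{i,j}}$ under the surjection, matching the normalization $w_{1}\wedge\dots\wedge w_i$ on each factor. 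I expect no genuine obstacle here: both theorems needed are already proved above, and the corollary is essentially a restatement of their content in dual language. Making the $\g^a$-equivariance (not just $(\fn^-)^a$-equivariance) explicit, by noting that $\fb$ acts compatibly on all $Q_\bm$ through the $\g^a$-module structure on the $Q_{i,j}$, is the last bookkeeping step.
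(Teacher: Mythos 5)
Your proposal is correct and follows essentially the same route as the paper: identify the fundamental case $Q_{i,j}^*\simeq M_{\al_{i,j}}$ and then transport the identification through the tensor-product embeddings $M_{\bm^1+\bm^2}\subset M_{\bm^1}\T M_{\bm^2}$, $Q^*_{\bm^1+\bm^2}\subset Q^*_{\bm^1}\T Q^*_{\bm^2}$, using cocyclicity of $Q_\bm$ to see that the dual is cyclic on the image of the highest weight vector. The paper states this more tersely as an induction on $\bm$, while you dualize the full surjection $\bigotimes Q_{i,j}^{\T m_{i,j}}\twoheadrightarrow Q_\bm$ in one step; the content is the same.
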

\begin{proof}
It is easy to see that the $(\fn^-)^a$-modules $Q^*_{i,j}$ and $M_{i,j}$ are isomorphic.
Since both families $Q_\bm^*$ and $M_\bm$ can be constructed inductively via the $(\fn^-)^a$-embeddings
\[
M_{\bm^1+\bm^2}\subset M_{\bm^1}\T M_{\bm^2},\
Q^*_{\bm^1+\bm^2}\subset Q^*_{\bm^1}\T Q^*_{\bm^2},
\] 
our corollary follows.
\end{proof}


\section{Conjectures and further directions.}
In this section we collect conjectures concerning geometric and algebraic objects described in the present paper.

For the readers convenience we first recall the conjecture about the Pl\"ucker relations already stated in the 
previous section. Recall the ideal $J_n$ generated by the (generalized) Pl\"ucker relations.

\begin{conj}
The ideals $J_n$ and $I(R_n)$ coincide.
\end{conj}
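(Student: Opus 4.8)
The plan is to prove the reverse inclusion $I(R_n)\subseteq J_n$, since $J_n\subseteq I(R_n)$ and the description of $I(R_n)$ as the saturation of $J_n$ are already available. As $\prod_{i\le j}X^{(i,j)}_{1,\dots,i}$ does not vanish on the open cell of $R_n$, one has $I(R_n)=\bigl(J_n:(\prod_{i\le j}X^{(i,j)}_{1,\dots,i})^{\infty}\bigr)$, so the conjecture is equivalent to saying that each $X^{(i,j)}_{1,\dots,i}$ is a non-zero-divisor modulo $J_n$. I would establish this in the equivalent numerical form: for every multidegree $\bm$,
\[
\dim_\bC(\bC{\bf X}_n/J_n)_\bm=\dim_\bC(Q_n)_\bm=\dim_\bC M_\bm .
\]
Here $\dim(\bC{\bf X}_n/J_n)_\bm\ge\dim(Q_n)_\bm$ is immediate from $J_n\subseteq I(R_n)$, and $\dim(Q_n)_\bm=\dim M_\bm$ by the corollary identifying $Q_\bm^*$ with $M_\bm$; so everything reduces to producing a spanning set of $(\bC{\bf X}_n/J_n)_\bm$ whose image in $(Q_n)_\bm$ is linearly independent.

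For the spanning set I would fix a monomial order on $\bC{\bf X}_n$ --- lexicographic with respect to a total order on the variables $X^{(i,j)}_L$ refining the PBW-degree and a suitable ordering of the pairs $(i,j)$ and of the index multisets --- for which the leading term of every generalized Pl\"ucker relation $R^{(i_1,j_1),(i_2,j_2);k}_{L,P}$ is its ``crossing'' monomial $X^{(i_1,j_1)}_L X^{(i_2,j_2)}_P$ (the term singled out as the ``initial term'' in the remark following the definition of these relations). Since reduction modulo a set of polynomials with monomial leading terms always terminates for a monomial order, the set $S_\bm$ of monomials of multidegree $\bm$ divisible by none of these crossing monomials spans $(\bC{\bf X}_n/J_n)_\bm$; concretely these ``standard monomials'' are the products $\prod_t X^{(i_t,j_t)}_{L_t}$ whose index sets are nested in the pattern prescribed by the incidences $V_{i,j}\subseteq V_{i+1,j}$ and $pr_{j+1}V_{i,j}\subseteq V_{i,j+1}$. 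This already gives $\dim(\bC{\bf X}_n/J_n)_\bm\le|S_\bm|$.

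For the linear independence I would use the embedding $Q_n\hookrightarrow A_n$ of Corollary \ref{emb}, under which $X^{(i,j)}_L\mapsto T_{i,j}\det M$ with $M$ the matrix \eqref{det1}. Choosing an appropriate monomial order on $A_n$, the leading term of $\det M$ is $\pm\prod_a Z_{a,\sigma_L(a)}$ for the order-preserving matching $\sigma_L$ attached to $L$, so each monomial of $\bC{\bf X}_n$ acquires a well-defined leading $(T,Z)$-monomial under $\Psi$; the point to verify is that on the subset $S_\bm$ the resulting assignment is \emph{injective}. Granting this, $|S_\bm|\le\dim(Q_n)_\bm=\dim M_\bm$, and combined with the previous paragraph we obtain $|S_\bm|\le\dim M_\bm\le\dim(\bC{\bf X}_n/J_n)_\bm\le|S_\bm|$. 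Hence all four quantities coincide, $S_\bm$ maps to a basis of $(Q_n)_\bm$, the surjection $(\bC{\bf X}_n/J_n)_\bm\to(Q_n)_\bm$ is an isomorphism for every $\bm$, and therefore $J_n=I(R_n)$.

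The hard part will be the compatibility of the two monomial orders used in the two steps: one must arrange the order on $\bC{\bf X}_n$ so that every relation has its crossing monomial as leading term \emph{and} so that the standard set $S_\bm$ is precisely the set on which $\Psi$ separates leading terms in $A_n$ --- these requirements pull in opposite directions, as in the classical straightening law, and the extra difficulty here is that three flavours of relations enter (the vertical ones of Lemma \ref{vert}, the partly degenerate horizontal ones of Lemma \ref{hor}, and the single-Grassmannian ones of Example \ref{Gr}), so the reduction must be organised so that no step of straightening ever creates a more ``crossing'' product. A possible alternative that may avoid this global combinatorics is to argue inductively along the Bott tower $R_n=R_n(0)\to R_n(1)\to\cdots\to pt$, peeling off one $\bP^1$-fibration (equivalently one extremal variable) at a time and tracking the restriction of $J_n$; the case $n=3$ and the Catalan-type examples of Section 2 already confirm the statement and can serve as the base of such an induction.
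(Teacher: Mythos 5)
This statement is stated in the paper as a \emph{conjecture}: the author offers no proof, and explicitly suggests, as a possible line of attack, exactly the strategy you outline (exhibit a set of monomials spanning $\bC{\bf X}_n/J_n$ whose image in $Q_n=\bC{\bf X}_n/I(R_n)$ is linearly independent, in the spirit of standard monomial theory). So your framing is sound and your reductions are correct: since $I(R_n)$ is the saturation of $J_n$ along $\prod_{i\le j}X^{(i,j)}_{1,\dots,i}$, and since $\dim (Q_n)_\bm=\dim M_\bm$ by the corollary identifying $Q_\bm^*$ with $M_\bm$, the conjecture does reduce to the dimension count $\dim(\bC{\bf X}_n/J_n)_\bm=\dim M_\bm$ for all $\bm$. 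But what you have written is a plan, not a proof: every step that actually requires work is deferred.

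Concretely, three things are asserted without justification, and each is the real content of the problem. First, you claim a monomial order exists for which the leading term of every relation $R^{(i_1,j_1),(i_2,j_2);k}_{L,P}$ is the ``crossing'' monomial $X^{(i_1,j_1)}_LX^{(i_2,j_2)}_P$; but the paper's own remark points out that this initial term is \emph{absent} from the relation unless $\{p_1,\dots,p_k\}\subset\{1,\dots,i_2,j_1+1,\dots,n\}$, so for those relations the leading term lies among the ``straightened'' terms and your description of the standard set $S_\bm$ by non-divisibility is not yet meaningful. Second, the injectivity of the leading-$(T,Z)$-monomial assignment $\Psi$ on $S_\bm$ is precisely the linear-independence half of a straightening law and is simply ``granted.'' Third, the compatibility of the two monomial orders, which you yourself identify as ``the hard part,'' is the whole difficulty --- the author's closing remarks about the failure of $S_{\bm^1}+S_{\bm^2}=S_{\bm^1+\bm^2}$ for the conjectural monomial bases of $M_\bm$ suggest that the combinatorics here genuinely does not reduce to the classical semistandard-tableaux argument. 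Until the order is constructed, the leading terms computed, and the injectivity on $S_\bm$ verified, this remains an (entirely reasonable) research program rather than a proof; the statement is, as far as the paper is concerned, still open.
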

One way to prove such kind of statements is to find a set of linearly independent monomials in 
$Q_n=\bC{\bf X}_n/I(R_n)$
such that these monomials form a spanning set for $\bC{\bf X}_n/J_n$ (recall $J_n\subset I(R_n)$).
This is usually done in terms of some kind of (semi)standard tableaux (see \cite{Fu}, \cite{Fe1}).
It is interesting to describe the corresponding combinatorics in our situation.

The following conjecture provides a presentation of the modules $M_\bm$ in terms of generators and relations. 
It generalizes similar statements from \cite{FFoL1}, where the case of $\bm$ supported on the diagonal was considered ($m_{i,j}=0$ unless $i=j$).
\begin{conj}
The following $\g^a$-module $M_\bm$ is isomorphic to the quotient $\bC[f_{i,j}]_{i\le j}/I_\bm$, where
$I_\bm$ is the ideal generated by the subspace 
\begin{equation}\label{conj1}
\U(\fb) \circ \mathrm{span}(f_{i,j}^{\sum_{i\le k\le l\le j} m_{k,l}+1}, i\le j). 
\end{equation}
\end{conj}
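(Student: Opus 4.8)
The plan is to establish the conjectured presentation by the same strategy that works in \cite{FFoL1} for the diagonal case, carefully separating the ``easy'' inclusion from the ``hard'' one. Write $d_{i,j}=\sum_{i\le k\le l\le j} m_{k,l}$, let $I_\bm\subset \bC[f_{i,j}]_{i\le j}$ be the ideal generated by $\U(\fb)\circ\mathrm{span}(f_{i,j}^{d_{i,j}+1},\ i\le j)$, and let $N_\bm=\bC[f_{i,j}]_{i\le j}/I_\bm$. First I would show that the canonical surjection $N_\bm\twoheadrightarrow M_\bm$ is well defined, i.e.\ that $I_\bm\subset\mathrm{Ann}\,v_\bm$. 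Since $M_\bm\subset\bigotimes_{i\le j} M_{\al_{i,j}}^{\T m_{i,j}}$ with $v_\bm=\bigotimes v_{\al_{i,j}}^{\T m_{i,j}}$, it suffices to check that $f_{i,j}^{d_{i,j}+1}$ kills $v_\bm$; but on each tensor factor $M_{\al_{k,l}}$ the operator $f_{i,j}$ acts nontrivially only when $k\le i\le j\le l$, and in that case (by the Remark after the second Lemma, $M_{\al_{k,l}}$ sits inside $\Lambda^k W_{k,l}$ and $f_{i,j}$ acts as a single elementary operator) $f_{i,j}^2$ already annihilates $v_{\al_{k,l}}$; hence $f_{i,j}$ is locally nilpotent on the tensor product and acting on $v_\bm$ it produces zero after at most $d_{i,j}+1=(\text{number of factors on which it acts nontrivially})+1$ applications, by the Leibniz rule. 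Then $\fb$-invariance of $\mathrm{Ann}\,v_\bm$ (which holds because $v_\bm$ spans an $\h$-stable line and $\fn v_\bm=0$) upgrades $f_{i,j}^{d_{i,j}+1}\in\mathrm{Ann}\,v_\bm$ to $\U(\fb)\circ f_{i,j}^{d_{i,j}+1}\subset\mathrm{Ann}\,v_\bm$, giving $I_\bm\subset\mathrm{Ann}\,v_\bm$ and thus the surjection $N_\bm\twoheadrightarrow M_\bm$.

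The substantive content is the reverse estimate $\dim N_\bm(s)\le\dim M_\bm(s)$ for every $s$, equivalently an upper bound on $\dim N_\bm$ matching $\dim M_\bm$. My approach would be to produce an explicit spanning set of monomials $\prod f_{i,j}^{c_{i,j}}\cdot v$ for $N_\bm$, indexed by some combinatorial polytope $\Pi_\bm$ of exponent arrays $(c_{i,j})$, and then to show independently that the images of these monomials in $M_\bm$ are linearly independent. For the spanning side one uses the defining relations: from $f_{i,j}^{d_{i,j}+1}=0$ one gets $c_{i,j}\le d_{i,j}$, and applying $\U(\fb)$ to these powers and straightening via the $\fb$-action produces the further ``rectangular''/chain inequalities cutting out $\Pi_\bm$; this is the PBW-type / Gr\"obner argument of \cite{FFoL1}, \cite{FFoL2}, and it should go through because our defining subspace is of exactly the same shape as theirs, only with the diagonal exponents $m_i$ replaced by the ``box sums'' $d_{i,j}$. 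For the independence side I would exploit the geometry already built in the paper: by the Proposition identifying $R_n=\overline{(N^-)^a\cdot[v_{\bf 1}]}\subset\bP(M_{\bf 1})$ and the description of the coordinate ring, $M_\bm^*\simeq Q_\bm$ is realized concretely as a space of polynomials in the $X^{(i,j)}_L$, and Lemma \ref{det} writes the matrix coefficients of $\exp(\sum c_{a,b}f_{a,b})$ on $v_\bm$ as explicit products of minors in the $c$'s; pairing the candidate monomials against suitable such minors shows they are linearly independent in $M_\bm$.

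Combining the two sides: we have surjections $N_\bm\twoheadrightarrow M_\bm$, a monomial spanning set of $N_\bm$ of size $|\Pi_\bm|$, and linear independence in $M_\bm$ of the images of those monomials, so $\dim M_\bm\ge|\Pi_\bm|\ge\dim N_\bm\ge\dim M_\bm$; all inequalities are equalities, the surjection is an isomorphism, and in particular $\Pi_\bm$ has the predicted cardinality. For the graded refinement one runs the same argument degree by degree, noting that the PBW grading on $\bC[f_{i,j}]$ matches the grading on $M_\bm$ and that $f_{i,j}$ has degree $1$, so the spanning/independence bookkeeping respects the decomposition $N_\bm=\bigoplus_s N_\bm(s)$.

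The main obstacle I expect is the spanning step, i.e.\ proving that the $\U(\fb)$-span of the powers $f_{i,j}^{d_{i,j}+1}$ really forces all the inequalities defining $\Pi_\bm$, and that no monomials outside $\Pi_\bm$ survive. In the diagonal case of \cite{FFoL1} this is a delicate Gr\"obner-basis / crystal computation, and here the interaction between the different ``widths'' $j-i$ and the non-diagonal $m_{k,l}$ could generate new straightening relations that are not immediately of the expected form; controlling these — ideally by reducing to the diagonal case via the tensor decomposition $M_{\bm^1+\bm^2}\subset M_{\bm^1}\otimes M_{\bm^2}$ and an induction on $\bm$ — is the crux. A secondary difficulty is pinning down the exact shape of $\Pi_\bm$ so that $|\Pi_\bm|$ visibly equals $\dim M_\bm$; the expectation, consistent with Example \ref{Gr} and the Catalan-triangle example, is that $\Pi_\bm$ is an intersection of ``chain'' inequalities $\sum_{(k,l)\in C} c_{k,l}\le \sum_{(k,l)\preceq C}m_{k,l}$ indexed by Dyck-path-like data, but verifying this combinatorial identity is where the bulk of the routine (and not-so-routine) work lies.
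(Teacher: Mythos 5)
The statement you are proving is stated in the paper as a \emph{conjecture}, not a theorem: the author explicitly writes that computer experiments support it but that a proof is not available. Your proposal correctly reproduces the two parts that the paper does establish --- the surjection $\bC[f_{i,j}]_{i\le j}/I_\bm\twoheadrightarrow M_\bm$ (because the relations $f_{i,j}^{d_{i,j}+1}$ visibly kill $v_\bm$ by the Leibniz rule, and $\mathrm{Ann}\,v_\bm$ is $\fb$-stable), and the fact that the monomials $f^\bs$, $\bs\in S_\bm$, span the quotient (the paper's last Lemma, proved ``as in \cite{FFoL1}''). But the entire content of the conjecture is the remaining step: linear independence of the vectors $f^\bs v_\bm$ in $M_\bm$. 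On this point your proposal contains no argument, only the assertion that ``pairing the candidate monomials against suitable such minors shows they are linearly independent.'' That pairing is exactly what one would need to carry out, and it is not routine: the dual space $Q_\bm$ is a quotient of a tensor product of the $Q_{i,j}$, and exhibiting, for each $\bs\in S_\bm$, a functional that separates $f^\bs v_\bm$ from the others amounts to the full strength of the conjecture.

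Moreover, the fallback you propose for the hard step --- reducing to the diagonal case of \cite{FFoL1} by induction on $\bm$ via the embeddings $M_{\bm^1+\bm^2}\subset M_{\bm^1}\T M_{\bm^2}$ --- is precisely the strategy the paper identifies as broken: for general $\bm^1,\bm^2$ the Minkowski sum $S_{\bm^1}+S_{\bm^2}$ is strictly smaller than $S_{\bm^1+\bm^2}$, so the product of bases of the factors does not produce enough independent vectors in the top component, and the induction does not close. (A small additional slip: the condition for $f_{i,j}$ to act nontrivially on the factor $M_{\al_{k,l}}$ is $i\le k\le l\le j$, not $k\le i\le j\le l$ as you wrote; your count $d_{i,j}$ is nevertheless the correct one under the correct condition.) As it stands, your text is a plan that correctly locates the difficulty but does not resolve it; it does not constitute a proof of the statement.
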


We also conjecture that the monomial basis of \cite{FFoL1}, \cite{V} can be extended to our case. Namely, let 
$S_\bm$ be the subset of $\bZ_{\ge 0}^{n(n-1)/2}$ consisting of collections $\bs=(s_\al)_{\al>0}$ 
such that for any Dyck path $\bp$ starting at $\al_i$ and ending at $\al_j$ one has
\[
\sum_{\beta\in\bp} s_\beta\le \sum_{i\le k\le l\le j} m_{k,l}.
\]
\begin{conj}
The elements $\{f^\bs v_\bm, \bs\in S_\bm\}$ form a basis of $M_\bm$.
\end{conj}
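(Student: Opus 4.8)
The plan is to mimic the strategy of \cite{FFoL1},\cite{FFoL2}: establish the spanning property and the linear independence separately, with the spanning part handled by combinatorial straightening and the independence part handled by a degeneration/fibration argument through the Bott tower structure of $R_n$. First I would prove that $\{f^\bs v_\bm : \bs\in S_\bm\}$ spans $M_\bm$. The key input here is the relations defining $M_\bm$ inside $\bigotimes_{i\le j} M_{\al_{i,j}}^{\T m_{i,j}}$; more precisely, by the previous Conjecture (or, conditionally, by its proof) one knows $M_\bm\simeq \bC[f_{i,j}]_{i\le j}/I_\bm$ with $I_\bm$ generated by $\U(\fb)\circ\mathrm{span}(f_{i,j}^{N_{i,j}+1})$ where $N_{i,j}=\sum_{i\le k\le l\le j} m_{k,l}$. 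Given a monomial $f^\bs$ with $\bs\notin S_\bm$, there is a Dyck path $\bp$ from $\al_i$ to $\al_j$ with $\sum_{\beta\in\bp} s_\beta > N_{i,j}$; applying a suitable element of $\U(\fb)$ to the relation $f_{i,j}^{N_{i,j}+1}\equiv 0$ produces a straightening rule that rewrites $f^\bs v_\bm$ as a combination of $f^{\bs'} v_\bm$ with $\bs'$ strictly smaller in an appropriate total order (e.g. reverse lexicographic refined by the Dyck path statistics, exactly as in \cite{FFoL1}). Termination of this process gives the spanning statement, provided one checks the order is well-founded — this is routine but needs the combinatorics of Dyck paths to be set up carefully.

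For linear independence I would exploit the embedding $R_n=\overline{(N^-)^a\cdot[v_\bm]}\subset\bP(M_\bm)$ from Proposition~2.? together with the identification $Q_\bm^*\simeq M_\bm$ from the corollary in Section~3. Concretely, $\dim M_\bm = \dim Q_\bm$, and $Q_\bm$ is the degree-$\bm$ component of the coordinate ring of $R_n$; since $R_n$ is a Bott tower, its Hilbert function with respect to the natural polarization is computable as an iterated sum over the $\bP^1$-fibrations, and this produces exactly a product/sum formula that should match $|S_\bm|$. Thus the strategy is: (a) show $|S_\bm|=\dim Q_\bm$ by comparing the Dyck path count with the Bott tower Hilbert function, and (b) combine with the spanning statement and $\dim M_\bm=\dim Q_\bm$ to conclude that $\{f^\bs v_\bm\}$ is a basis. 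Alternatively, one can prove independence directly by evaluating the monomials $f^\bs v_\bm$ against the coordinate functions $X^{(i,j)}_L$ using Lemma~\ref{det}: a monomial $f^\bs$ acting on $v_\bm$ produces an explicit wedge-product expression whose Plücker coordinates are monomials in the $c_{a,b}$, and one shows the resulting matrix (indexed by $\bs\in S_\bm$ against a well-chosen set of coordinates) is triangular with respect to the Dyck path order.

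The main obstacle I anticipate is step (a): matching the Dyck path cardinality $|S_\bm|$ with the actual dimension of $Q_\bm$. The Dyck path constraints are a system of inequalities over all lattice paths, and although for $\bm$ supported on the diagonal this is known (\cite{FFoL1},\cite{V}), extending the count to general $\bm$ — where $N_{i,j}=\sum_{i\le k\le l\le j} m_{k,l}$ is itself a triangular sum rather than a simple sum $m_i+\dots+m_j$ — requires either a bijective argument or an inductive computation of the Bott tower Hilbert polynomial that tracks the multidegree $\bm$. A secondary difficulty is that the whole argument is currently conditional on the preceding Conjecture giving the presentation of $M_\bm$; if one wants an unconditional proof one must instead prove the straightening relations directly inside the tensor product $\bigotimes M_{\al_{i,j}}^{\T m_{i,j}}$, using the explicit action of $\fb$ on each tensor factor $\Lambda^i W_{i,j}$, which is more delicate because the $\fb$-action on the Cartan component is not diagonalizable in the obvious way. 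I would therefore first settle the presentation conjecture, or at least the one-sided inclusion $I_\bm\subseteq\mathrm{Ann}\, M_\bm$ that suffices for spanning, and then run the dimension comparison above.
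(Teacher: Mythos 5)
This statement is a \emph{conjecture} in the paper, explicitly left open: the author proves the spanning part (the lemma that $\{f^{\bs},\ \bs\in S_{\bm}\}$ spans $\bC[f_{i,j}]/I_{\bm}$, which surjects onto $M_{\bm}$) and states that linear independence is supported by computer experiments but that he is ``not able to prove it so far.'' Your proposal reproduces exactly this known reduction --- spanning via straightening against the relations $f_{i,j}^{N_{i,j}+1}$, then a dimension count to get independence --- and then correctly names the hard step (matching $|S_{\bm}|$ with $\dim Q_{\bm}$) without carrying it out. So what you have written is a plan with an acknowledged hole at precisely the point where the conjecture lives, not a proof.

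Two more specific remarks. First, the paper identifies the concrete obstruction to the FFoL1-style induction: the Minkowski sum property $S_{\bm^1}+S_{\bm^2}=S_{\bm^1+\bm^2}$ \emph{fails} for general $\bm^1,\bm^2$. This is fatal for any argument that tries to reduce the independence statement for general $\bm$ to the fundamental cases $M_{\al_{i,j}}$ via the tensor-product embeddings $M_{\bm^1+\bm^2}\subset M_{\bm^1}\T M_{\bm^2}$ --- including the inductive construction used in the paper's corollary $Q_{\bm}^*\simeq M_{\bm}$, which only gives you $\dim M_{\bm}=\dim Q_{\bm}$ but no handle on that common dimension. Second, your route (a) via the Bott tower Hilbert function is a genuinely reasonable alternative, but it needs two nontrivial inputs you do not supply: (i) that $Q_{\bm}$ equals the full space of sections $H^0(R_n,\cL_{\bm})$ (surjectivity of restriction from the ambient product of projective spaces, which requires the Frobenius-splitting/BWB machinery of \cite{FF} or the as-yet-unproven primality of $J_n$), and (ii) the combinatorial identity equating the resulting iterated sum over $\bP^1$-fibrations with the Dyck-path count $|S_{\bm}|$, where the triangular sums $N_{i,j}=\sum_{i\le k\le l\le j}m_{k,l}$ make the bookkeeping genuinely different from the diagonal case. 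Your route (b) (triangularity of the evaluation matrix against Pl\"ucker coordinates) is likewise only asserted; exhibiting a total order making that matrix triangular for arbitrary $\bm$ is essentially equivalent to the conjecture itself.
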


We note that two conjectures above are closely related. Namely, it is easy to show that there is a surjection
$\bC[f_{i,j}]_{i\le j}/I_\bm\to M_\bm$ (because the relations of the left hand side
hold in $M_\bm$). In addition we have the following lemma:
\begin{lem}
The vectors $\{f^\bs, \bs\in S_\bm\}$ span the right hand side of \eqref{conj1}. 
\end{lem}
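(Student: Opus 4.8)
The claim is that the PBW monomials indexed by the Dyck-path polytope $S_\bm$ span the quotient $\bC[f_{i,j}]_{i\le j}/I_\bm$. Equivalently, any monomial $f^\bs$ with $\bs\notin S_\bm$ can be rewritten modulo $I_\bm$ as a linear combination of monomials of the same weight indexed by elements of $S_\bm$. The plan is to follow the straightening procedure of \cite{FFoL1}, \cite{V}, reducing the general $\bm$ to the diagonal case by a reindexing trick, and then invoke the straightening lemma proved there.

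First I would recall that a collection $\bs=(s_\al)_{\al>0}$ fails to lie in $S_\bm$ precisely when there is a Dyck path $\bp$ from $\al_i$ to $\al_j$ with $\sum_{\beta\in\bp} s_\beta > \sum_{i\le k\le l\le j} m_{k,l}$. Set $M_{i,j}=\sum_{i\le k\le l\le j} m_{k,l}$; these numbers satisfy the key monotonicity and "sub/superadditivity along paths" properties that the $m_i$'s satisfied in the diagonal case — indeed $M_{i,j}=(\om_i+\cd+\om_j,\la)$-type expressions for a virtual weight, and what really matters is only that for a Dyck path $\bp$ from $\al_i$ to $\al_j$ one has $M_{i,j}\le\sum_{\beta\in\bp}(\text{appropriate bound})$. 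So the combinatorial set-up of \cite{V}, Section on Dyck paths and the PBW basis, applies verbatim with $m_\ell$ replaced by $M_{i,j}$. The generators of $I_\bm$ from \eqref{conj1} are, up to the $\U(\fb)$-action, the powers $f_{i,j}^{M_{i,j}+1}$; applying $\U(\fb)\circ(-)$ produces exactly the family of "higher" relations (sums of products of $f_\beta$'s over antichains/paths, each term of total degree $M_{i,j}+1$) that \cite{FFoL1} uses as straightening relations.

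The main step is then the straightening induction itself. Fix a weight $\mu$ and put the homogeneous-lexicographic order on degree-$|\mu|$ monomials used in \cite{FFoL1} (order the roots $\al_{i,j}$ by some total order refining the dominance order, then order exponent vectors lexicographically, or use the "sum over a Dyck path" statistic as in \cite{V}). Given $f^\bs$ with $\bs\notin S_\bm$, pick a violated Dyck path $\bp$ from $\al_i$ to $\al_j$; the corresponding relation in $I_\bm$, obtained by applying a suitable product of elements of $\fb$ to $f_{i,j}^{M_{i,j}+1}$, has leading term a scalar multiple of $\prod_{\beta\in\bp} f_\beta^{(\text{part of }\bs\text{ on }\bp)}$ times a lower-degree monomial supported off $\bp$, and all other terms strictly smaller in the chosen order. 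Multiplying this relation by the complementary monomial $\prod_{\beta\notin\bp} f_\beta^{s_\beta}$ rewrites $f^\bs$ modulo $I_\bm$ as a combination of strictly smaller monomials of weight $\mu$. Since the order is well-founded on the finite set of degree-$|\mu|$ monomials, the induction terminates and we are left with a combination of $f^{\bs'}$, $\bs'\in S_\bm$.

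The step I expect to be the genuine obstacle is verifying that the $\U(\fb)$-orbit of $f_{i,j}^{M_{i,j}+1}$ actually contains a relation whose \emph{leading term} (in the chosen monomial order) is the desired product along the violated Dyck path — in the diagonal case this is the content of the technical lemmas of \cite{FFoL1}/\cite{FFiL}, and transplanting it requires checking that the combinatorics of Dyck paths for the exponents $M_{i,j}$ behaves the same way, in particular that whenever $\bs\notin S_\bm$ there is a \emph{minimal} violated path to which the relation attaches cleanly without the lower-order correction terms reintroducing violations. I would handle this by (i) reducing, via the substitution $m_{k,l}\rightsquigarrow M_{i,j}$, to citing the straightening lemma of \cite{FFoL1} as a black box — the hypotheses there are stated purely in terms of a function on pairs $(i,j)$ with the monotonicity/path inequalities that $M_{i,j}$ visibly satisfies — and (ii) if a fully self-contained argument is wanted, re-running the induction on $(|\mu|,\text{number of violated paths})$ and using that applying $\ad(e_{i,j})$'s to $f_{i,j}^{N}$ produces precisely the symmetrised sums appearing in Proposition \ref{ffl}'s ideal, which is the same mechanism as in the diagonal case.
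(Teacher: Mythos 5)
Your proposal is correct and coincides with the paper's approach: the paper's entire proof is the single sentence ``The proof is very similar to the one in \cite{FFoL1}'', i.e.\ precisely the straightening argument you describe, transported by replacing the diagonal exponents $m_i+\cdots+m_j$ with $M_{i,j}=\sum_{i\le k\le l\le j}m_{k,l}$. Your observation that only the spanning (straightening) half of \cite{FFoL1} is needed here --- so the failure of the Minkowski-sum property, which the paper flags as the obstacle to linear independence, does not interfere --- is exactly the right point.
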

\begin{proof}
The proof is very similar to the one in \cite{FFoL1}.
\end{proof}

Hence in order to prove two conjectures above it suffices to show that the vectors 
$\{f^\bs v_\bm, \bs\in S_\bm\}$ are linearly independent. Computer experiments support this conjecture, but we 
are not able to prove it so far. The main difference with the case of \cite{FFoL1} is that the the Minkowsky
sum $S_{\bm^1}+S_{\bm^2}$ is no longer equal to $S_{\bm^1+\bm^2}$ for general $\bm^1$ and $\bm^2$.

\section*{Acknowledgments}
This work was partially supported
by the Russian President Grant MK-3312.2012.1, by the Dynasty Foundation and 
by the AG Laboratory HSE, RF government grant, ag. 11.G34.31.0023.
This study comprises research findings from the `Representation Theory
in Geometry and in Mathematical Physics' carried out within The
National Research University Higher School of Economics' Academic Fund Program
in 2012, grant No 12-05-0014.
This study was carried out within "The National Research University Higher School of Economics' Academic Fund Program in 2012-2013, research grant No. 11-01-0017.


\begin{thebibliography}{99}

\bibitem[A]{A}
I.Arzhantsev, {\it Flag varieties as equivariant compactifications of $\bG_a^n$}, 
Proc. Amer. Math. Soc. 139 (2011), no. 3, 783--786.     


\bibitem[Fe1]{Fe1}
E.Feigin, {\it ${\mathbb G}_a^M$ degeneration of flag varieties}, arXiv:1007.0646, to appear in 
Selecta Mathematica.

\bibitem[Fe2]{Fe2}
E.Feigin, {\it Degenerate flag varieties and the median Genocchi numbers}, Math. Res. Lett. 18 (2011), 
no. 06, pp. 1–16.

\bibitem[FF]{FF}
E.Feigin and M.Finkelberg, {\it Degenerate flag varieties of type A: Frobenius splitting and BWB theorem},
arXiv:1103.1491.


\bibitem[FFiL]{FFiL}
E.Feigin, M.Finkelberg, P.Littelmann, {\it Symplectic degenerate flag varieties},
arXiv:1106.1399.

\bibitem[FH]{FH}
W.Fulton, J.Harris, (1991), {\it Representation theory. A first course},
Graduate Texts in Mathematics, Readings in Mathematics {\bf 129}, New York.

\bibitem[FFoL1]{FFoL1}
E.Feigin, G.Fourier, P.Littelmann,
{\it PBW filtration and bases for irreducible modules in type $A_n$},
Transformation Groups {\bf 16}, Number 1 (2011), 71--89.

\bibitem[FFoL2]{FFoL2}
E.~Feigin, G.~Fourier, P.~Littelmann,
{\it PBW filtration and bases for symplectic Lie algebras},
International Mathematics Research Notices 2011; doi: 10.1093/imrn/rnr014.


\bibitem[Fu]{Fu}
W.Fulton, {\it Young Tableaux, with Applications to Representation Theory and Geometry.} 
Cambridge University Press, 1997. 



\bibitem[H]{H}
R.Hartshorne, {\it Algebraic Geometry}. GTM, No. 52. Springer-Verlag, 1977.


\bibitem[HT]{HT}
B.Hassett, Yu.Tschinkel, {\it  Geometry of equivariant compactifications of $\bG^n_a$},
Int. Math. Res. Notices 20 (1999), 1211-1230.

\bibitem[K]{K}
S.Kumar, {\it Kac-Moody Groups, Their Flag Varieties and Representation Theory,}
Progress in Mathematics, Vol. 204, Birkh\" auser, Boston (2002).

\bibitem[PY]{PY}
D.Panyushev, O.Yakimova, {\it A remarkable contraction of semisimple Lie algebras},
arXiv:1107.0702.

\bibitem[V]{V}
E.Vinberg, {\it On some canonical bases of representation spaces of simple Lie algebras},
conference talk, Bielefeld, 2005.

\bibitem[Y]{Y}
O.Yakimova, {\it One-parameter contractions of Lie-Poisson brackets},  arXiv:1202.3009.
    

\end{thebibliography}
\end{document}